\definecolor{darkblue}{rgb}{0,0,0.6}
\newtheorem{proposition}{Proposition}[section]
\newtheorem{theorem}[proposition]{Theorem}
\newtheorem{lemma}[proposition]{Lemma}
\newtheorem{thmx}{Theorem}
\newtheorem{corx}[thmx]{Corollary}
\theoremstyle{definition}
\newtheorem{definition}[proposition]{Definition}
\newtheorem{question}[proposition]{Question}
\newtheorem{example}[proposition]{Example}
\newtheorem{construction}[proposition]{Construction}
\newtheorem*{claim}{Claim}
\theoremstyle{remark}
\newtheorem{remark}[proposition]{Remark}
\newtheorem*{remark*}{Remark}
\newcommand{\R}{\mathbb{R}}
\newcommand{\Z}{\mathbb{Z}}
\newcommand{\im}{\operatorname{Im}}
\newcommand{\Id}{\operatorname{Id}}
\newcommand{\id}{\operatorname{Id}}
\newcommand{\wt}{\widetilde}
\newcommand{\wh}{\widehat}
\newcommand{\sm}{\setminus}
\newcommand{\ks}{\mathrm{ks}}
\newcommand{\Sm}{\mathsf{Sm}}
\newcommand{\KS}{\mathsf{KS}}
\newcommand{\CP}{\mathbb{C}P}
\DeclareMathOperator{\Sq}{Sq}
\DeclareMathOperator{\Aut}{Aut}
\DeclareMathOperator{\Wh}{Wh}
\DeclareMathOperator{\pt}{pt}
\DeclareMathOperator{\pr}{pr}
\newcommand{\CAT}{\mathrm{CAT}}
\newcommand{\Top}{\mathrm{Top}}
\newcommand{\Diff}{\mathrm{Diff}}
\newcommand{\PL}{\mathrm{PL}}
\newcommand{\OO}{\mathrm{O}}
\newcommand{\G}{\mathrm{G}}
\DeclareMathOperator{\MSTop}{MSTop}
\newcommand{\BTop}{\mathrm{BTop}}
\newcommand{\BTOP}{\mathrm{BTop}}
\newcommand{\BSTop}{\mathrm{BSTop}}
\newcommand{\BG}{\mathrm{BG}}
\newcommand{\BO}{\mathrm{BO}}
\newcommand{\BPL}{\mathrm{BPL}}
\newcommand{\BB}{\mathrm{B}}
\newcommand{\Ad}{\mathrm{Ad}}
\newcommand{\SW}{\mathrm{SW}}
\DeclareMathOperator{\Th}{Th}
\DeclareMathOperator{\Homeo}{Homeo}
\newcommand{\Diffeo}{\operatorname{Diffeo}}
\newcommand{\pseudoHomeo}{\wt{\operatorname{Homeo}}\mkern 0mu}
\newcommand{\pseudoDiffeo}{\wt{\operatorname{Diffeo}}\mkern 0mu}
\newcommand{\hAut}{\wt{\operatorname{hAut}}\mkern 0mu}
\newcommand{\sAut}{\wt{\operatorname{sAut}}\mkern 0mu}
\newcommand\lra{\longrightarrow}
\newcommand{\bS}{\mathbb{S}}
\newcommand{\mft}{\mathfrak{t}}
\begin{document}
\title{Smoothing topological pseudo-isotopies of 4-manifolds}

\author[P.~Orson]{Patrick Orson}
\address{Mathematics Department, California Polytechnic State University, USA}
\email{porson@calpoly.edu}

 \author[M.~Powell]{Mark Powell}
 \address{School of Mathematics and Statistics, University of Glasgow, United Kingdom}
 \email{mark.powell@glasgow.ac.uk}

 \author[O.~Randal-Williams]{Oscar Randal-Williams}
  \address{Centre for Mathematical Sciences, University of Cambridge, United Kingdom}
 \email{or257@cam.ac.uk}

\def\subjclassname{\textup{2020} Mathematics Subject Classification}
\expandafter\let\csname subjclassname@1991\endcsname=\subjclassname
\subjclass{
57K40, 
57N37, 
57R52 
}
\keywords{4-manifolds, pseudo-isotopy, stable isotopy}

\begin{abstract}
Given a closed, smooth $4$-manifold $X$  and self-diffeomorphism $f$ that is topologically pseudo-isotopic to the identity, we study the question of whether $f$ is moreover smoothly pseudo-isotopic to the identity. If the fundamental group of $X$ lies in a certain class, which includes trivial, free, and finite groups of odd order, we show the answer is always affirmative. On the other hand, we produce the first examples of manifolds $X$ and diffeomorphisms~$f$ where the answer is negative. Our investigation is motivated by the question, which remains open, of whether there exists a self-diffeomorphism of a closed $4$-manifold that is topologically isotopic to the identity, but not stably smoothly isotopic to the identity.
\end{abstract}
\maketitle

\section{Introduction}

A principle in $4$-manifold theory says that topological results about smooth 4-manifolds often become true smoothly after stabilisation, meaning after taking connected sum with some number of copies of $S^2 \times S^2$.
The most famous instance is the theorem of Wall \cite{MR163323,MR163324} and Freedman~\cite{F} that homeomorphic closed, smooth, simply connected $4$-manifolds are stably diffeomorphic.  Similarly Kreck~\cite{Kreck-isotopy-classes}, Quinn~\cite{Quinn:isotopy} (with a correction by \cite{GGHKP}), and Gabai~\cite{Gabai-22} showed that topologically isotopic diffeomorphisms of such 4-manifolds are stably smoothly isotopic.
Further examples of this principle are not confined to the simply connected case e.g.~Quinn's stable $s$-cobordism theorem~\cite{quinn-1983}, Gompf's extension~\cite{Gompf-stable} of the Wall--Freedman result above to all oriented compact 4-manifolds, and Cappell--Shaneson's stable surgery sequence~\cite{CS-on-4-d-surgery}. In the realm of embedded surfaces, Cha--Kim~\cite{chakim-lightbulb} proved that topologically embedded surfaces are stably smoothable, and Galvin~\cite{GalvinCS} proved that topologically isotopic surfaces in a simply connected $4$-manifold are stably smoothly isotopic.

Lest this long list of results be misconstrued as evidence for a completely general principle, it should be contrasted with exotic phenomena detected by Rochlin's theorem, or equivalently the Kirby--Siebenmann invariant.  For example Kreck~\cite{MR764582}, Cappell--Shaneson~\cite{CS-new-four-mflds-annals}, and Akbulut~\cite{Akbulut-on-fake,Akbulut-fake-4-manifold} each constructed an exotic pair of nonorientable 4-manifolds that fail to be stably diffeomorphic.  Similarly, Cappell--Shaneson and Galvin's non-smoothable homeomorphisms~\cite{CS-on-4-d-surgery,GalvinCS} are not stably smoothable.

In this article, we use the Kirby--Siebenmann invariant to investigate the principle in the context of isotopy of diffeomorphisms. We are motivated by the following question.

\begin{question}\label{question:motivation}
Given a pair of self-diffeomorphisms of a closed $4$-manifold that are topologically isotopic, are they  moreover stably smoothly isotopic?
\end{question}

This remains open, but our results, concerning the intermediate notion of pseudo-isotopy, provide a positive answer in some cases, and indicate that the answer may be negative in general.

\subsection{Results}
Let $X$ be a smooth, connected, compact 4-manifold, possibly with nonempty boundary, and let $f_0, f_1 \colon X \to X$ be diffeomorphisms that restrict to the identity on the boundary. A homeomorphism $F \colon X \times I \to X \times I$ restricting to $f_i$ on $X\times\{i\}$ and to the identity on $(\partial X) \times [0,1]$ is called a \emph{topological pseudo-isotopy} from $f_0$ to $f_1$. If $F$ is moreover a diffeomorphism, it is called a \emph{smooth pseudo-isotopy} from $f_0$ to $f_1$.

Given a topological pseudo-isotopy $F\colon  X\times I\to X\times I$, we can give the boundary of $X \times I \times I$ a new smooth structure as follows. Writing $\sigma$ for the smooth structure on $X \times I$, endow $X \times I \times \{1\}$ with the smooth structure $F^*\sigma$, and leave the smooth structure alone on the rest of the boundary: write $\partial(X \times I \times I)_F$ for this smooth structure. Then $(X \times I \times I, \partial(X \times I \times I)_F)$ is a topological~$6$-manifold with a smooth structure on its boundary, so has a relative Kirby--Siebenmann invariant
\[
\KS(F) := \ks(X \times I \times I, \partial(X \times I \times I)_F) \in H^4(X \times I \times I ,\partial(X \times I \times I); \Z/2)\cong_{PD} H_2(X;\Z/2)
\]
which obstructs the extension of the smooth structure $\partial(X \times I \times I)_F$ to the whole of $X \times I \times I$. Our first main result is that this invariant also characterises whether $F$ is smoothable.

\begin{thmx}\label{thm:main-invariant}
  Given a topological pseudo-isotopy $F$ from $f_0$ to $f_1$, the obstruction $\KS(F) \in H_2(X;\Z/2)$ vanishes if and only if $F \colon X \times I \to X \times I$ is topologically isotopic rel.\ $\partial(X \times [0,1])$ to a smooth pseudo-isotopy.
\end{thmx}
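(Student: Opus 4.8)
Abbreviate $W := X \times I \times I$ and write $\partial W_F := \partial(X\times I\times I)_F$ for the boundary smooth structure in the statement. Split $\partial W = \partial^+ W \cup \partial^- W$, where $\partial^+ W := X \times I \times \{1\}$ carries the structure $F^*\sigma$, and $\partial^- W := (X \times I \times \{0\}) \cup (\partial X \times I \times I) \cup (X \times \{0,1\} \times I)$ carries the standard product structure; on $\partial^+ W \cap \partial^- W = \partial(X \times I) \times \{1\}$ the map $F$, hence $F^*\sigma$, agrees with the standard structure, so $\partial W_F$ is well defined. The plan is to run Kirby--Siebenmann smoothing theory on the $6$-manifold $W$ and to extract the desired smooth pseudo-isotopy from an extension of $\partial W_F$ over $W$. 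The ``only if'' direction is formal: if $F$ is topologically isotopic rel $\partial(X \times I)$ to a smooth pseudo-isotopy $G$, then $G^*\sigma = \sigma$, so $\partial W_G$ is the standard structure on $\partial W$, which extends over $W$, giving $\KS(G) = 0$; and since $\KS$ is by definition the obstruction to extending $\partial W_F$ over $W$, it depends only on the concordance class rel $\partial(X \times I)$ of the smoothing $F^*\sigma$ of $X \times I$, hence only on the isotopy class of $F$ rel $\partial(X \times I)$, so $\KS(F) = \KS(G) = 0$.

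For the ``if'' direction, suppose $\KS(F) = 0$. Since $\dim W = 6$, $\Top/\OO$ is $2$-connected with $\pi_3(\Top/\OO) = \Z/2$ and $\pi_4(\Top/\OO) = \pi_5(\Top/\OO) = \pi_6(\Top/\OO) = 0$, and $H^k(W, \partial W) = 0$ for $k \geq 7$, the primary obstruction $\KS(F)$ is the only obstruction to extending $\partial W_F$ over $W$; so there is a smooth structure $\Sigma$ on $W$ restricting to $\partial W_F$. The key observation is that the inclusion $\partial^- W \hookrightarrow W$ is a homotopy equivalence, both sides deformation retracting onto $X \times I \times \{0\}$; hence the obstruction groups $H^{k+1}(W, \partial^- W; \pi_k(\Top/\OO))$ all vanish, and by the smoothing classification $W$ carries a unique smooth structure rel $\partial^- W$ up to concordance. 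As $\Sigma$ and the standard product structure $\sigma_W$ both restrict to the standard structure on a collar of $\partial^- W$, they are concordant rel $\partial^- W$, so by concordance-implies-isotopy in dimension $6$ there is a homeomorphism $\Psi \colon W \to W$, topologically isotopic to $\id_W$ rel $\partial^- W$, with $\Psi \colon (W, \sigma_W) \to (W, \Sigma)$ a diffeomorphism.

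Because $\Psi$ fixes $\partial^- W$ it preserves $\partial^+ W$, so $G := \Psi|_{\partial^+ W}$ is a diffeomorphism $(X \times I, \sigma) \to (X \times I, F^*\sigma)$ which is the identity on $\partial(X \times I)$ and, by restricting the isotopy $\Psi \simeq \id_W$ to $\partial^+ W$, is topologically isotopic to $\id_{X \times I}$ rel $\partial(X \times I)$. Then $F \circ G$, as a self-map of the standard $(X \times I, \sigma)$, is a composite of diffeomorphisms $(X \times I, \sigma) \xrightarrow{G} (X \times I, F^*\sigma) \xrightarrow{F} (X \times I, \sigma)$, hence a diffeomorphism; post-composing the isotopy $G \simeq \id_{X\times I}$ with $F$ shows $F \circ G$ is topologically isotopic to $F$ rel $\partial(X \times I)$; and since $G$ is the identity on $X \times \{0\}$, $X \times \{1\}$, and $\partial X \times I$, the diffeomorphism $F \circ G$ restricts there to $f_0$, $f_1$, and the identity, so it is a smooth pseudo-isotopy from $f_0$ to $f_1$.

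The main obstacle is the smoothing-theory bookkeeping: confirming that $\KS(F)$ really is the complete obstruction to extending $\partial W_F$ (the dimension count together with the absence of indeterminacy), and, more delicately, applying concordance-implies-isotopy rel the codimension-zero submanifold-with-corners $\partial^- W \subseteq \partial W$ in such a way that the resulting ambient isotopy restricts to $\partial^+ W$. The observation that $\partial^- W \hookrightarrow W$ is a homotopy equivalence -- which forces $\Sigma$ to be concordant to the standard structure rel $\partial^- W$ -- is the conceptual crux that makes the argument go through.
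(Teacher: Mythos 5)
Your proposal is correct, and both directions are sound: the ``only if'' direction via isotopy-invariance of $\KS$ matches the content of the paper's Lemma~\ref{lem:stronger} (specialised from pseudo-isotopy to isotopy), and the ``if'' direction correctly reduces to Kirby--Siebenmann smoothing theory plus concordance-implies-isotopy. The implementation of the key step differs from the paper's in a way worth noting. You treat the extended smooth structure $\Sigma$ on the $6$-manifold $W$ as an object to be compared with the product structure $\sigma_W$: this forces you to first prove $\Sigma$ and $\sigma_W$ are concordant rel.\ $\partial^-W$ (via the observation that $\partial^-W \hookrightarrow W$ is a homotopy equivalence, so $[W/\partial^-W, \Top/\OO] = 0$), then apply concordance-implies-isotopy in dimension $6$, and finally restrict the resulting ambient isotopy $\Psi_t$ to the top face $\partial^+W$ --- which is where the corner/restriction issues you flag arise (they are resolvable: $\Psi_t$ fixes $\partial^-W$ pointwise and preserves $\partial W$, hence preserves $\partial^+W$). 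The paper's proof of Theorem~\ref{thm:obstruction-thy} short-circuits all of this by observing that $\Sigma$ \emph{already is} a concordance rel.\ boundary from $(X\times I)_\sigma$ to $(X\times I)_{F^*\sigma}$, viewed as smoothings of the $5$-manifold $X \times I$; concordance-implies-isotopy applied one dimension down then directly produces the isotopy $G_t$ of $X \times I$ with $G_1^*\sigma = F^*\sigma$, and $F \circ G_1^{-1}$ is the desired smooth pseudo-isotopy. Your route buys nothing extra but costs an additional obstruction-theoretic uniqueness argument and the corner bookkeeping; the paper's reading of $\Sigma$ as the concordance itself is the cleaner move. Both arguments are valid.
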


A result of Gabai~\cite[Theorem~2.5]{Gabai-22} states that, for a diffeomorphism, the existence of a smooth pseudo-isotopy to the identity with vanishing primary Hatcher--Wagoner invariants, implies smoothly stably isotopic to the identity. Combining this with work of Galvin--Nonino~\cite{Galvin:2025aa} on topological Hatcher--Wagoner invariants,  we deduce the following.

\begin{corx}\label{cor:main-B}
  If $F$ is a topological isotopy from $f_0$ to $f_1$ with $\KS(F) = 0 \in H_2(X;\Z/2)$, then~$f_0$ and $f_1$ are smoothly stably isotopic.
\end{corx}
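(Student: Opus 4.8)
The plan is to combine Theorem~\ref{thm:main-invariant} with the cited results of Gabai and Galvin--Nonino, and the only real work is checking that the hypotheses of those results are available for a pseudo-isotopy of the form produced by Theorem~\ref{thm:main-invariant}. First I would apply Theorem~\ref{thm:main-invariant}: since $\KS(F) = 0$, the topological isotopy $F$ is topologically isotopic rel $\partial(X\times[0,1])$ to a \emph{smooth} pseudo-isotopy $F'$ from $f_0$ to $f_1$. Next, since $F$ was an isotopy (not merely a pseudo-isotopy), $f_0$ and $f_1$ are \emph{topologically} isotopic, and I would use this together with the Galvin--Nonino computation of topological Hatcher--Wagoner-type invariants to conclude that the relevant primary obstructions for $F'$ vanish: the point is that these primary invariants of a pseudo-isotopy depend only on the underlying topological pseudo-isotopy class, and for a genuine topological isotopy they are trivial. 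Concretely, after possibly pre- or post-composing with an isotopy of $X\times I$, one arranges $F'$ to be a smooth pseudo-isotopy from the identity to the identity (by using smooth isotopies realizing $f_0 \simeq \mathrm{Id}$ on one end after the topological reduction—or more carefully, one compares $f_0$ and $f_1$ via the isotopy to reduce to the based case), whose primary Hatcher--Wagoner invariants $\Sigma$ and $\Theta$ vanish because they are detected topologically and $F$ is a topological isotopy.

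With vanishing primary invariants in hand, I would invoke Gabai's \cite[Theorem~2.5]{Gabai-22}: a diffeomorphism admitting a smooth pseudo-isotopy to the identity with vanishing primary Hatcher--Wagoner invariants is smoothly stably isotopic to the identity. Applying this (to $f_1 \circ f_0^{-1}$, or directly after the based reduction above) yields that $f_0$ and $f_1$ become smoothly isotopic after connected sum with some number of copies of $S^2\times S^2$, which is exactly the assertion.

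The main obstacle is the middle step: carefully matching the \emph{topological} Hatcher--Wagoner invariants studied by Galvin--Nonino with the \emph{smooth} primary invariants required as input to Gabai's theorem, and confirming that the smoothing procedure of Theorem~\ref{thm:main-invariant} does not alter these invariants (it cannot, since it only changes $F$ within its topological isotopy class rel boundary, and the invariants in question are topological-isotopy invariants). A secondary bookkeeping point is reducing from a pseudo-isotopy between two possibly distinct diffeomorphisms $f_0, f_1$ to the standard "to the identity" setting that Gabai's theorem is phrased in; this is handled by composing with $f_0^{-1}$ and noting that smooth stable isotopy is preserved under such composition. Both of these steps are routine given the quoted results, so the corollary follows.
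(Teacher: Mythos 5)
Your proposal is correct and follows essentially the same route as the paper: apply Theorem~\ref{thm:main-invariant} to replace $F$ by a smooth pseudo-isotopy $F'$, use the Galvin--Nonino factorisation $\Sigma^{\Diff} = \Sigma^{\Top}\circ(\text{forget})$ together with the fact that a topological isotopy is trivial in $\pi_0\mathcal{P}^{\Top}(X)$ to get $\Sigma^{\Diff}([F'])=0$, and then invoke Gabai's theorem. The only cosmetic difference is that the paper needs just the single obstruction $\Sigma^{\Diff}$ valued in $\Wh_2(\pi_1(X))$ as input to Gabai's theorem, rather than the full pair of primary Hatcher--Wagoner invariants you mention.
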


\medskip

Our next goal is to establish a condition under which diffeomorphisms are smoothly pseudo-isotopic if and only if they are topologically pseudo-isotopic.
By \cref{thm:main-invariant} this is equivalent to showing, for a given pair of diffeomorphisms, that there exists a topological pseudo-isotopy $F$ between them with $\KS(F)=0$.
To formulate our result we will make use of a certain map
\[
I_2\colon H_2(\pi;\Z_{(2)} )\lra L_6(\Z[\pi])_{(2)},
\]
arising from the assembly map in algebraic $L$-theory.
 We will describe this map and elucidate some of its properties in Section~\ref{sec:algsurgery} (in particular in Definition~\ref{defn:I2}). The following will be proved using methods from surgery theory.

\begin{thmx}\label{thm:main-guarantee}
Let $X$ be an oriented, smooth, connected, compact 4-manifold, write $\pi:=\pi_1(X)$, and suppose that
\begin{enumerate}[label=(\roman*)]
\item\label{it:main-guarantee1} the map $I_2\colon H_2(\pi;\Z_{(2)})\to L_6(\Z[\pi])_{(2)}$ is trivial, and

\item\label{it:main-guarantee2} $H_1(\pi ; \Z_{(2)})$ has no torsion.
\end{enumerate}
Then a diffeomorphism which is topologically pseudo-isotopic to the identity is smoothly pseudo-isotopic to the identity.
\end{thmx}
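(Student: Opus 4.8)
The plan is to reduce the statement, via Theorem~\ref{thm:main-invariant}, to the following: given a diffeomorphism $f$ of $X$ that is topologically pseudo-isotopic to the identity, one can choose a topological pseudo-isotopy $F$ from $\id_X$ to $f$ with $\KS(F) = 0 \in H_2(X;\Z/2)$. The first observation is that the set of topological pseudo-isotopies from $\id_X$ to $f$, up to topological isotopy rel.\ $\partial(X \times [0,1])$, is a torsor over the group $\pi_0 \pseudoHomeo(X)$ of topological pseudo-isotopies of $X$ from $\id_X$ to itself (composition of pseudo-isotopies). Since $\KS$ is additive in a suitable sense, modifying $F$ by an element $G \in \pi_0\pseudoHomeo(X)$ changes $\KS(F)$ by $\KS(G)$. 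Hence the obstruction to smoothing some topological pseudo-isotopy from $\id_X$ to $f$ lives in the quotient $H_2(X;\Z/2) \big/ \im\big(\KS \colon \pi_0\pseudoHomeo(X) \to H_2(X;\Z/2)\big)$, and the task becomes to show that $\KS$ is \emph{surjective} onto $H_2(X;\Z/2)$ when hypotheses \ref{it:main-guarantee1} and \ref{it:main-guarantee2} hold.

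To produce pseudo-isotopies realising a given class in $H_2(X;\Z/2)$, I would use the standard surgery-theoretic model. A topological pseudo-isotopy of $X$ from $\id$ to itself is essentially a self-homeomorphism of $X \times I$ rel.\ $\partial$, and the set of these (up to isotopy) fits into the structure set / normal invariant machinery: there is a surgery exact sequence relating $\pi_0\Homeo(X\times I, \partial)$ to the topological normal invariants $[X \times I / \partial X \times I, \, \G/\Top]$ and the $L$-group $L_6(\Z[\pi])$. The Kirby--Siebenmann component of the normal invariant of $X \times I$ is exactly a class in $H^4(X \times I, \partial; \Z/2) \cong H_2(X;\Z/2)$, and by construction this is $\KS(F)$. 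So the image of $\KS$ is the cokernel of the $L_6$-acting map, i.e.\ one must show every such normal invariant that is \emph{purely Kirby--Siebenmann} (trivial in the surgery-theoretic sense) is realised by a genuine self-homeomorphism rel.\ $\partial$. Concretely, the $\Z/2$-part of $[X\times I/\partial, \G/\Top]$ contributing to $H_2(X;\Z/2)$ is hit by a homeomorphism precisely when the relevant piece of the assembly map $H_*(X;\mathbb{L}) \to L_*(\Z[\pi])$ kills it; after using Poincaré duality and the fibre sequence for $\G/\Top$, the obstruction is controlled by the map on low-degree group homology, which is where $I_2 \colon H_2(\pi;\Z_{(2)}) \to L_6(\Z[\pi])_{(2)}$ enters. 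Hypothesis \ref{it:main-guarantee1} says this obstruction vanishes, and hypothesis \ref{it:main-guarantee2} (no $2$-torsion in $H_1(\pi)$) removes a potential $\Z/2$ contribution coming from the $H_1(\pi;\Z/2)$-summand of the normal invariants via the $ko$-orientation / $\Sq^2$ terms, ensuring the $H_2(X;\Z/2)$ component decouples cleanly from the rest of the normal invariant.

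The main obstacle I expect is the careful bookkeeping in identifying exactly which summand of the topological normal invariants $[X \times I/\partial, \G/\Top]$ maps isomorphically onto the target $H_2(X;\Z/2)$ of $\KS$, and checking that the surgery obstruction restricted to that summand is governed precisely by $I_2$ after localising at $2$. This requires: (a) unwinding the definition of $\KS(F)$ in terms of the $\ks$-component of the normal invariant of the self-homeomorphism $F$ of $X\times I \times I$ --- or of $X\times I$, one dimension down, using that pseudo-isotopies of $X$ are homeomorphisms of $X \times I$ --- and matching the Poincaré duality isomorphisms; (b) analysing the $2$-local structure of $\G/\Top$, whose $2$-localisation splits (after Kirby--Siebenmann) into Eilenberg--MacLane pieces $K(\Z/2,4k+2) \times K(\Z_{(2)},4k)$, so that the $H_2(X;\Z/2)$-valued normal invariants correspond to the $K(\Z/2,2)$ factor; and (c) showing the surgery assembly map on this factor is, up to the relevant identifications, the composite of the Hurewicz/classifying map $H_2(X;\Z/2) \to H_2(\pi;\Z/2)$ with (a lift of) $I_2$. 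Granting the description of $I_2$ and its properties promised in Section~\ref{sec:algsurgery}, the remaining verifications are formal consequences of the topological surgery exact sequence for $6$-manifolds (valid here since $\pi$ is the fundamental group of a $4$-manifold, hence good surgery is not needed in dimension $6$) together with the two hypotheses. I would organise the write-up as: (1) the torsor/additivity reduction; (2) the surgery-theoretic model for $\pi_0\pseudoHomeo(X)$ and identification of $\KS$ with a normal-invariant component; (3) the $2$-local computation invoking \ref{it:main-guarantee1} and \ref{it:main-guarantee2}; (4) conclude surjectivity of $\KS$ and apply Theorem~\ref{thm:main-invariant}.
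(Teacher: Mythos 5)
Your high-level reduction is the same as the paper's: by additivity of $\KS$ (Lemma~\ref{lem:ksConcordanceInvariant}) and the torsor structure, it suffices to realise every class of $H_2(X;\Z/2)$ as $\KS$ of a pseudo-isotopy from the identity to the identity, and then conclude with Theorem~\ref{thm:main-invariant}; this is exactly the content of Theorem~\ref{thm:main-technical}~\ref{it:main-technicalA} and~\ref{it:main-technicalC}, and the realisation is indeed done by mapping the topological surgery exact sequence into the block-automorphism exact sequence and killing the surgery obstruction $2$-locally via $I_2$. However, your surgery-theoretic set-up is off in a way that matters. The relevant surgery exact sequence is that of the $6$-manifold $X\times I\times I$: its structure set $\mathcal{S}^{\Top}_\partial(X\times I\times I)\cong\pi_2\big(\sAut^+_\partial(X)/\pseudoHomeo^+_\partial(X)\big)$ maps to pseudo-isotopies from $\Id$ to $\Id$ via the $s$-cobordism theorem (Lemma~\ref{lem:normalcommute}). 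Your ``one dimension down'' alternative does not work: a self-homeomorphism of $X\times I$ rel boundary represents the \emph{trivial} element of $\mathcal{S}^{\Top}_\partial(X\times I)$, and moreover $H^4(X\times I,\partial;\Z/2)\cong H_1(X;\Z/2)$, not $H_2(X;\Z/2)$, while pairing $[\tfrac{X\times I}{\partial},\G/\Top]$ with $L_6(\Z[\pi])$ mixes dimensions.

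The more serious gap is in your $2$-local identification, which is the heart of the proof and determines how both hypotheses enter. Under the Taylor--Williams splitting of $(\G/\Top)_{(2)}$, the $\KS$-component of a normal invariant is \emph{not} the $K(\Z/2,2)$ factor: by Lemma~\ref{lem:KSformula} it is $\mathrm{red}_2(l^q_4)+k_2\smile k_2$, i.e.\ the mod-$2$ reduction of the \emph{integral} $K(\Z_{(2)},4)$-component plus a square term (which vanishes here because $\tfrac{X\times I\times I}{\partial}$ is a double suspension, Lemma~\ref{lem:Psi}). Consequently only classes in $\operatorname{im}(\mathrm{red}_2)=\ker\big(\delta_*\colon H_2(X;\Z/2)\to H_1(X;\Z_{(2)})\big)$ can be realised, and that is precisely the role of hypothesis~\ref{it:main-guarantee2}: it forces $\delta_*=0$, so every class lifts to $H_2(X;\Z_{(2)})$, which is then pushed to $H_2(\pi;\Z_{(2)})$ and killed by $I_2=0$ (hypothesis~\ref{it:main-guarantee1}), after which a $2$-localisation and odd-multiple argument produces an honest structure. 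Your proposed mechanism for~\ref{it:main-guarantee2} (removing a ``$\Z/2$ contribution from an $H_1(\pi;\Z/2)$-summand of the normal invariants via $ko$/$\Sq^2$ terms'') is not what happens: the $2$-local normal invariants of $X\times I\times I$ are $H_4(X;\Z/2)\oplus H_2(X;\Z_{(2)})\oplus H_0(X;\Z/2)$ and contain no such summand. Finally, the comparison of Ranicki--Sullivan duality with ordinary Poincar\'{e} duality is not purely formal bookkeeping: the two identifications differ by correction terms involving the Morgan--Sullivan class and $\sum V_{2j}\smile\Sq^1(V_{2j})$ (Lemmas~\ref{lem:LThyFundamentalClassUnderSplitting} and~\ref{lem:TheTwoIsomorphisms}), and one must check these do not disturb the $\KS$-component in this dimension.
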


For some $\pi_1(X)$ it was already known that homotopic diffeomorphisms are smoothly pseudo-isotopic, and thus the conclusion of Theorem~\ref{thm:main-guarantee} was already known for these cases. This is the case for the trivial group, due to Kreck~\cite[Theorem~1]{Kreck-isotopy-classes} and more generally for free groups by Krannich-Kupers~\cite[Theorem~B]{KK24}. The following gives additional examples where the hypotheses of Theorem~\ref{thm:main-guarantee} are satisfied, and hence provides $4$-manifolds~$X$ where topological pseudo-isotopy of diffeomorphisms implies smooth pseudo-isotopy.

\begin{example}
Any group $\pi$ with $H_1(\pi;\Z_{(2)})$ torsion-free and $H_2(\pi;\Z_{(2)})=0$ satisfies the hypotheses. For example, free groups, any finite group of odd order, or knot groups. Further ad hoc specific examples of this sort can also be constructed. For example, $\pi_1(S^1\times \Sigma)$ or $\pi_1(\Sigma)$, where $\Sigma$ is a $\Z_{(2)}$-homology sphere of some dimension.

If $\pi$ is a finite group whose 2-Sylow subgroup is abelian or generalised quaternion, then it follows from \cite{Stein} (see also \cite[p.~177]{MR557167}) that the map $I_2$ is trivial. So such groups with~$H_1(\pi;\Z_{(2)})=0$ satisfy the hypotheses. For example, the alternating group $A_5$ satisfies this, so~$I_2=0$ even though~$H_2(A_5;\Z_{(2)}) \cong \Z/2 \neq 0$.

Moreover, by functoriality of the assembly map, and hence of $I_2$, the assumptions \ref{it:main-guarantee1} and \ref{it:main-guarantee2} are closed under free products, and hence Theorem~\ref{thm:main-guarantee} holds for $X_1 \# X_2$ if it holds for $X_1$ and $X_2$.
\end{example}

\begin{corx}\label{cor:mainD}
Let $X$ be an oriented, smooth, connected, compact 4-manifold, write $\pi:=\pi_1(X)$. Assume $\pi$ satisfies the hypotheses \ref{it:main-guarantee1} and \ref{it:main-guarantee2} of Theorem~\ref{thm:main-guarantee} and also that $\mathrm{Wh}_2(\pi)=0$. Then a self-diffeomorphism of $X$ that is topologically isotopic to the identity is moreover stably smoothly isotopic to the identity.
\end{corx}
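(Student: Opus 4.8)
The plan is to combine \cref{thm:main-guarantee} with Gabai's pseudo-isotopy criterion \cite[Theorem~2.5]{Gabai-22} and the Galvin--Nonino analysis of (topological) Hatcher--Wagoner invariants \cite{Galvin:2025aa}, reusing the mechanism behind \cref{cor:main-B}. Let $f\colon X\to X$ be a self-diffeomorphism that is topologically isotopic to the identity. In particular $f$ is topologically pseudo-isotopic to the identity, so, since $\pi$ satisfies \ref{it:main-guarantee1} and \ref{it:main-guarantee2}, \cref{thm:main-guarantee} provides a \emph{smooth} pseudo-isotopy $G$ from $f$ to the identity. By \cite[Theorem~2.5]{Gabai-22} it then suffices to exhibit a smooth pseudo-isotopy from $f$ to the identity whose primary Hatcher--Wagoner invariants both vanish; the remaining work is to arrange this for a suitable modification of $G$.

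First I would dispose of the first Hatcher--Wagoner obstruction: $\Sigma(G)$ lies in $\Wh_2(\pi)$, which is zero by hypothesis, so $\Sigma(G)=0$ automatically and the secondary obstruction $\theta(G)$ is defined. To make $\theta$ vanish I would pass to the topological category: the topological Hatcher--Wagoner invariants of \cite{Galvin:2025aa} are invariants of topological pseudo-isotopies, are compatible with the smooth invariants on smoothable pseudo-isotopies, and vanish on genuine topological isotopies, which are level-preserving and so carry no handle-theoretic obstruction. It is here that the hypothesis that $f$ is topologically isotopic to the identity --- rather than merely pseudo-isotopic --- is essential: using additivity of these invariants under concatenation, together with the realisation statements of \cite{Gabai-22,Galvin:2025aa}, one corrects $G$ by composing with a smooth pseudo-isotopy of $X$ to obtain a smooth pseudo-isotopy $G'$ from $f$ to the identity with $\Sigma(G')=0$ and $\theta(G')=0$. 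Applying \cite[Theorem~2.5]{Gabai-22} to $G'$ then gives that $f$ is smoothly stably isotopic to the identity, as required.

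The step I expect to be the main obstacle is the treatment of the secondary Hatcher--Wagoner invariant just described: identifying the smooth and topological secondary invariants of a pseudo-isotopy of a $4$-manifold and establishing their naturality and additivity, verifying that a topological isotopy contributes trivially, and checking that the resulting correction of $G$ can be realised by an honestly smooth pseudo-isotopy of $X$ without reintroducing a nonzero $\Sigma$. These inputs should be extractable from \cite{Gabai-22} and \cite{Galvin:2025aa}; granting them, the role of the three hypotheses is transparent --- \ref{it:main-guarantee1} and \ref{it:main-guarantee2} yield a smooth pseudo-isotopy to the identity via \cref{thm:main-guarantee}, and $\Wh_2(\pi)=0$ together with the topological-isotopy hypothesis kills the two primary Hatcher--Wagoner obstructions so that Gabai's criterion applies.
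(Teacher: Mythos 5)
Your overall skeleton agrees with the paper's: view the topological isotopy as a topological pseudo-isotopy, apply \cref{thm:main-guarantee} to upgrade it to a smooth pseudo-isotopy $G$ from $f$ to $\Id_X$, and then invoke Gabai's criterion. But the second half of your argument rests on a misreading of what Gabai's theorem requires. As stated in \cref{thm:gabai}, the only obstruction is $\Sigma^{\Diff}([G])\in\Wh_2(\pi_1(X))$: a diffeomorphism is smoothly stably isotopic to the identity if and only if it admits a smooth pseudo-isotopy to the identity with $\Sigma^{\Diff}=0$. Under the hypothesis $\Wh_2(\pi)=0$ this vanishes for \emph{any} smooth pseudo-isotopy, so the proof is finished the moment \cref{thm:main-guarantee} hands you $G$. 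There is no secondary obstruction $\theta(G)$ to kill, no need for the topological Hatcher--Wagoner invariants of Galvin--Nonino (those are used in the paper only for \cref{cor:main-B}, where $\Wh_2(\pi)$ is not assumed to vanish and one must instead argue $\Sigma^{\Diff}([F'])=\Sigma^{\Top}([F])=0$ via the factorisation \eqref{eqn:defn-wh2-obstruction-top-PI}), and no need to correct $G$ by composition.

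The step you yourself flag as ``the main obstacle'' --- identifying smooth and topological secondary invariants, establishing additivity and realisation, and modifying $G$ so that $\theta(G')=0$ --- is therefore both a genuine gap in your write-up (it is asserted, not proved, and realisation of Hatcher--Wagoner-type invariants in dimension $4$ is exactly the sort of thing one cannot wave at) and entirely unnecessary. Relatedly, your claim that the hypothesis ``topologically isotopic'' rather than merely ``topologically pseudo-isotopic'' is \emph{essential} is not correct here: given $\Wh_2(\pi)=0$, topological pseudo-isotopy to the identity already yields stable smooth isotopy by \cref{thm:main-guarantee} followed by \cref{thm:gabai}. The isotopy hypothesis plays that essential role only in \cref{cor:main-B}.
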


\begin{proof}
Consider the topological isotopy as topological pseudo-isotopy. By \cref{thm:main-guarantee} there is a smooth pseudo-isotopy, and then by Gabai's theorem again \cite[Theorem~2.5]{Gabai-22}, which applies since $\mathrm{Wh}_2(\pi)=0$, there is a stable smooth isotopy.
\end{proof}

Using \cref{cor:mainD}, we can expand the class of groups for which Question~\ref{question:motivation} is known to have a positive answer, which was previously limited to free groups, as discussed above. We note that the conditions of \cref{cor:mainD} are closed under free products of groups.

\begin{example}
It is conjectured that $\Wh_2(\pi)=0$ for all torsion-free groups~\cite[Conjecture~5.22]{FarrellJones}, and this conjecture is known to hold whenever the torsion-free group satisfies the ``Farrell--Jones Conjecture for $K$-theory for torsion-free groups and regular rings'', which itself follows from the full Farrell--Jones conjecture~\cite[Theorem~13.65~(i),~(ii),~(vii)]{FarrellJones}. For a summary of the many classes of groups known to satisfy these conjectures, see~\cite[Theorem 16.1]{FarrellJones}. 
In particular, every 3-manifold group satisfies the full Farrell--Jones conjecture~\cite{Bartels-Farrell-Luck-FJ}, and so every classical knot group satisfies the conditions of \cref{cor:mainD}.
\end{example}

\begin{example}
Dunwoody~\cite{Dunwoody} showed that $\Wh_2(\Z/3)=0$, and so $\pi=\Z/3$ also satisfies the conditions of \cref{cor:mainD}.
\end{example}

\medskip

Now we consider cases where topological pseudo-isotopy does not imply smooth pseudo-isotopy.
After perhaps stabilising the $4$-manifold by taking connected sums with $S^2 \times S^2$, there is a plentiful supply of non-smoothable topological pseudo-isotopies between diffeomorphisms, as the following makes precise.

\begin{thmx}\label{thm:main-realise}
For each smooth, connected, compact $4$-manifold $X$ there is a $g \geq 0$ such that every class $x \in H_2(X \# g(S^2 \times S^2) ;\Z/2)$ arises as $\KS(F)$ for some topological pseudo-isotopy $F$ from the identity to some diffeomorphism $f\colon X \# g(S^2 \times S^2) \to X \# g(S^2 \times S^2)$.
\end{thmx}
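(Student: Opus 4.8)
The plan is: a topological pseudo-isotopy $F$ from $\id$ to a diffeomorphism produces a smooth structure on $X\times I$, standard near the boundary, whose smoothing invariant recovers $\KS(F)$; conversely, every smoothing invariant is realised by some such structure, which defines a smooth $s$-cobordism, and by Quinn's stable $s$-cobordism theorem this becomes a smooth product after stabilising --- and a product structure is exactly the data of a pseudo-isotopy. The stabilisation in the statement is precisely where the conversion needs Quinn's theorem. The starting point, which is part of the analysis behind \cref{thm:main-invariant}, is this. If $F$ is a topological pseudo-isotopy from $\id$ to a diffeomorphism $f$ of $X$, then $F$ is the identity on $X\times\{0\}\cup\partial X\times I$ and a diffeomorphism on $X\times\{1\}$, so the pulled-back smooth structure $F^*\sigma$ on the topological manifold $X\times I$ agrees with $\sigma$ on all of $\partial(X\times I)$, and $\KS(F)\in H_2(X;\Z/2)$ is carried to the smoothing invariant of $F^*\sigma$ under the bijection
\[
\{\text{smoothings of }X\times I\text{ rel }\partial\}/\text{concordance}\ \xrightarrow{\ \sim\ }\ [X\times I/\partial(X\times I),\ \Top/\OO]\ =\ H^3(X\times I,\partial(X\times I);\Z/2)\ \cong\ H_2(X;\Z/2),
\]
the middle equality holding because $\Top/\OO$ is $2$-connected with $\pi_3=\Z/2$ and $\pi_4=\pi_5=0$, hence is a $K(\Z/2,3)$ in the range of degrees relevant to a $5$-complex. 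In particular every class of $H_2(X;\Z/2)$ is the smoothing invariant of some smoothing of $X\times I$ rel $\partial$; the content is to arrange, after stabilising, that such a smoothing is $F^*\sigma$ for an honest pseudo-isotopy.

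\emph{The crux construction.} Fix $x\in H_2(X;\Z/2)$ and represent it by an embedded surface $\Sigma\subset X$. Smoothing theory supplies a smooth structure $\Sigma_x$ on the topological manifold $X\times I$ which is standard near $\partial(X\times I)$ and has smoothing invariant $x$ --- concretely, the standard structure modified in a tubular neighbourhood of $\Sigma\times\{\tfrac12\}$ by the generator of $\pi_3(\Top/\OO)=\Z/2$. Then $W_x:=(X\times I,\Sigma_x)$ is a smooth compact $5$-manifold which is homeomorphic rel $\partial$ to $X\times I$, and so is a smooth $s$-cobordism from $X$ to $X$ rel $\partial X$ --- an $h$-cobordism because the relevant inclusions are homotopy equivalences, with vanishing Whitehead torsion because it is topologically a product. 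By Quinn's stable $s$-cobordism theorem \cite{quinn-1983} there is a $g\ge0$ such that stabilising $W_x$ by $g$ copies of $S^2\times S^2$ produces a smooth product: writing $X_g:=X\#g(S^2\times S^2)$, we obtain a smooth structure on the topological manifold $X_g\times I$, again standard near the boundary and with smoothing invariant the image of $x$ in $H_2(X_g;\Z/2)$, together with a diffeomorphism $\Phi$ from it to the standard $X_g\times I$ which is the identity on $X_g\times\{0\}\cup\partial X_g\times I$. Viewing $\Phi$ as a self-homeomorphism $F$ of the topological manifold $X_g\times I$, it is the identity on $X_g\times\{0\}\cup\partial X_g\times I$ and restricts on $X_g\times\{1\}$ to a self-diffeomorphism of $X_g$; so $F$ is a topological pseudo-isotopy of $X_g$ from the identity to a diffeomorphism, with $F^*\sigma$ the smoothing just named, and hence $\KS(F)$ equal to the image of $x$ in $H_2(X_g;\Z/2)$.

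It remains to pass from ``each class, after some stabilisation'' to ``every class, after a single stabilisation.'' For this I would use three soft facts: (a) $\KS$ is additive under composition of pseudo-isotopies --- in the reformulation above it is a difference of smoothing invariants and a pseudo-isotopy acts trivially on $H^*(X\times I,\partial(X\times I);\Z/2)$, being the identity on the deformation retract $X\times\{0\}$ --- so the set $R(Y)\subseteq H_2(Y;\Z/2)$ of classes realised by pseudo-isotopies of a $4$-manifold $Y$ is a subgroup; (b) if a class lies in $R(Y)$ then the corresponding smoothing of $Y\times I$ is already a smooth product, and stabilising it by one further $S^2\times S^2$ (a vertical surgery) is still a product, so realised classes persist under stabilisation; (c) $\KS$ is natural for self-diffeomorphisms of the $4$-manifold, so summand-permuting self-diffeomorphisms of $X_g$ carry the set of realised classes to itself while permuting the standard generators of the $S^2\times S^2$ summands. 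Now apply the crux construction to $X$ and each element of a basis of $H_2(X;\Z/2)$, and to $X\#(S^2\times S^2)$ and each of its two new generators; let $g$ be the maximum of the finitely many resulting stabilisation numbers. Using (b) and (c), inside $H_2(X_g;\Z/2)=H_2(X;\Z/2)\oplus(\Z/2)^{2g}$ the images of a basis of $H_2(X;\Z/2)$ together with the standard generators of all $g$ summands lie in $R(X_g)$; as these generate, (a) gives $R(X_g)=H_2(X_g;\Z/2)$, which is the assertion of the theorem for this $g$.

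The main obstacle is the crux construction, and within it the two substantive ingredients: the identification of $\KS(F)$ with the smoothing invariant of $F^*\sigma$, which belongs to the circle of ideas proving \cref{thm:main-invariant}; and the appeal to Quinn's stable $s$-cobordism theorem to trivialise the $5$-dimensional $s$-cobordism $W_x$ after stabilisation, which is genuinely needed since such an $s$-cobordism need not itself be a smooth product. The argument is insensitive to orientations, using only $\Z/2$-coefficients and Poincar\'e--Lefschetz duality, matching the hypotheses of the theorem.
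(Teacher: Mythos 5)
Your core construction coincides with the paper's: realise $x$ as a smooth structure on $X \times I$ rel boundary via smoothing theory (using that $\Top/\OO$ is a $K(\Z/2,3)$ in the relevant range), observe that the resulting smooth $5$-manifold is an $s$-cobordism because the underlying topological manifold is literally a product, trivialise it after stabilisation by Quinn's stable $s$-cobordism theorem, and read off the composite ``identity homeomorphism followed by Quinn's diffeomorphism'' as a topological pseudo-isotopy from the identity to a diffeomorphism whose $\KS$ is the image of $x$; the identification of $\KS(F)$ with the rel-boundary smoothing invariant of $F^*\sigma$, which you lean on, is also treated as ``by construction'' in the paper. Where you genuinely diverge is in passing to a single $g$ realising everything. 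The paper first removes a disc so that $\partial X \neq \emptyset$ and stabilises by gluing the trivial cobordism over $(\partial X \times I)\# g(S^2\times S^2)$ along the boundary collar; it then gets all of $H_2(X;\Z/2)$ by finiteness, and realises the new $H_2(g(S^2\times S^2);\Z/2)$ classes by invoking Theorem~\ref{thm:main-guarantee} (equivalently Kreck's theorem) for the simply connected manifold $W_{g,1}$ and plugging those pseudo-isotopies into $X'$, using additivity of $\KS$ (Lemma~\ref{lem:ksConcordanceInvariant}) just as you do. You instead bootstrap: run the crux construction also on $X\#(S^2\times S^2)$ and its two new generators, then combine additivity, persistence of realised classes under further stabilisation, and conjugation by summand-permuting diffeomorphisms. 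This buys independence from the simply connected pseudo-isotopy input, at the cost of two auxiliary facts you assert but do not prove: persistence under adding a summand (which needs the arc-uniqueness argument that summing the product structure along a vertical arc is again a product, and that the smoothing invariant stabilises to $(x,0)$) and $\KS$-naturality under self-diffeomorphisms; both are routine, and your internal stabilisation also handles closed $X$ without the paper's disc-removal step. Your choice of $g$ should account for the summand already present when the crux construction is applied to $X\#(S^2\times S^2)$, but that is cosmetic.
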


Combining Theorem \ref{thm:main-realise} with further methods from surgery theory, we are able to give examples of the following nature.

\begin{thmx}\label{thm:main-example}
There exists a closed, smooth $4$-manifold $X$ and a diffeomorphism $f \colon X \to X$ such that $f$ is topologically but not smoothly pseudo-isotopic to the identity. Thus $f$ is not smoothly stably isotopic to the identity.
\end{thmx}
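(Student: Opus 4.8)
The plan is to combine \cref{thm:main-realise}, which realises any prescribed Kirby--Siebenmann class by a topological pseudo-isotopy from the identity to some diffeomorphism, with a surgery-theoretic analysis showing that, for a carefully chosen $X$, the resulting diffeomorphism admits \emph{no} smoothable topological pseudo-isotopy to the identity. The first step is a torsor reformulation. Fix a smooth, connected, compact $X$ and a diffeomorphism $f$ that is topologically pseudo-isotopic to the identity. Any two topological pseudo-isotopies from $\mathrm{id}$ to $f$ differ by stacking (in the $I$-direction) with a topological pseudo-isotopy from $\mathrm{id}$ to $\mathrm{id}$, and conversely; since a Mayer--Vietoris argument for the relative Kirby--Siebenmann invariant along such stackings shows that $\KS$ is additive, the set of values $\{\KS(F)\}$, as $F$ ranges over topological pseudo-isotopies from $\mathrm{id}$ to $f$, is a single coset of the subgroup
\[
A_X \;:=\; \bigl\{\,\KS(c) : c \text{ a topological pseudo-isotopy of } X \text{ from } \mathrm{id} \text{ to } \mathrm{id}\,\bigr\}\;\subseteq\; H_2(X;\Z/2).
\]
Since the isotopy in \cref{thm:main-invariant} is rel $\partial(X \times I)$ and hence fixes endpoints, it follows that $f$ is smoothly pseudo-isotopic to the identity if and only if $\KS(F) \in A_X$ for one --- equivalently every --- topological pseudo-isotopy $F$ from $\mathrm{id}$ to $f$. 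Thus it suffices to produce a closed $X$ with $A_X \subsetneq H_2(X;\Z/2)$, choose $x \notin A_X$, and realise $x$ using \cref{thm:main-realise}.

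The second, and main, step is to bound $A_X$. As $A_X$ is the image of $\KS$ on the topological pseudo-isotopy space, and smoothing theory relates this to the smooth pseudo-isotopy space, one can run the surgery machinery underlying \cref{thm:main-invariant} and \cref{thm:main-guarantee} to control the image of $A_X$ under the natural surjection $H_2(X;\Z/2) \twoheadrightarrow H_2(\pi;\Z/2)$, $\pi := \pi_1 X$: roughly, the obstruction to a class lying in that image is governed by the assembly map $I_2 \colon H_2(\pi;\Z_{(2)}) \to L_6(\Z[\pi])_{(2)}$, so a class not coming from $\ker I_2$ cannot lie in the image of $A_X$. I would therefore take $\pi$ with $I_2$ \emph{non}trivial --- exactly the situation in which hypothesis~\ref{it:main-guarantee1} of \cref{thm:main-guarantee}, and with it the positive conclusion, can fail; such $\pi$ exist, for instance among finite groups whose $2$-Sylow subgroup is neither abelian nor generalised quaternion --- and pick a class $\bar x \in H_2(\pi;\Z/2)$ witnessing $I_2 \neq 0$ that lies outside the image of $A_X$. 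Choosing $X$ to be any closed smooth oriented $4$-manifold with $\pi_1 X = \pi$ and lifting $\bar x$ along $H_2(X;\Z/2) \twoheadrightarrow H_2(\pi;\Z/2)$ yields $x \notin A_X$; moreover the $L$-theoretic bound is unaffected by connected sum with $S^2 \times S^2$, so $x$ remains outside $A_{X \# g(S^2 \times S^2)}$ for every $g$.

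Finally, applying \cref{thm:main-realise} to $X$ gives $g \geq 0$ such that the image of $x$ in $H_2(X \# g(S^2 \times S^2);\Z/2)$ equals $\KS(F)$ for some topological pseudo-isotopy $F$ from $\mathrm{id}$ to a self-diffeomorphism $f$ of $X \# g(S^2\times S^2)$. By the previous two steps $f$ is topologically pseudo-isotopic to the identity (via $F$) but not smoothly so, proving the first assertion after renaming $X \# g(S^2 \times S^2)$ as $X$. For the last sentence: a smooth stable isotopy from $f$ to the identity would yield a smooth --- hence topological --- pseudo-isotopy from $f \# \mathrm{id}$ to $\mathrm{id}$ on some further stabilisation $X'$, while $F \# \mathrm{id}$ is a topological pseudo-isotopy on $X'$ from $\mathrm{id}$ to $f \# \mathrm{id}$ with $\KS$ equal to the image of $x$ in $H_2(X';\Z/2)$ --- which lies outside $A_{X'}$ by the stability above, contradicting the first step applied to $X'$. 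The hard part of this program is the bound on $A_X$: identifying precisely which Kirby--Siebenmann classes are hit by self-pseudo-isotopies of the identity, pinning the answer to the assembly map $I_2$, and checking that it persists under $S^2\times S^2$-stabilisation; the remaining steps are a repackaging of \cref{thm:main-realise} and \cref{thm:main-invariant} together with the routine additivity of $\KS$.
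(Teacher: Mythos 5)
Your first step---the torsor reformulation, i.e.\ that $f$ is smoothly pseudo-isotopic to the identity if and only if $\KS(F)$ lies in $A_X=\KS(\im\alpha)$, using additivity and injectivity of $\KS$---is correct and is equivalent to the paper's use of the exact sequence \eqref{eqn:key-long-exact-seq}; the final step (realising a class outside $A_X$ via Theorem~\ref{thm:main-realise}) is also the paper's route. The gap is your second step, which is the entire content of the theorem. You assert that membership in $A_X$ is obstructed by nontriviality of the assembly map $I_2$, so that a group $\pi$ with $I_2\neq 0$ yields a class outside $A_X$. This is unjustified, and there are concrete reasons it cannot work as stated. First, $A_X$ is the image of all of $\pi_1(\pseudoHomeo^+_\partial(X))$ under $\KS\circ\alpha$, but the surgery-theoretic description via $\mathcal{S}^{\Top}_\partial(X\times I\times I)$ only captures $\im(\partial)=\ker\big(\pi_1(\pseudoHomeo^+_\partial(X))\to\pi_1(\sAut^+_\partial(X))\big)$; an upper bound on $A_X$ must control precisely the loops of homeomorphisms that are homotopically nontrivial, and the surgery obstruction $\sigma$ says nothing about those. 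Second, even on $\im(\partial)$, vanishing of $\sigma$ is vanishing of the \emph{full} assembled class $(a,b,c,\dots)$ and does not isolate the $I_2(b)$ component, so $I_2\neq 0$ does not exclude $\mathrm{red}_2(b)$ from $A_X$; Theorem~\ref{thm:main-guarantee} is a sufficient condition whose failure does not by itself produce counterexamples. Third, your suggested $\pi$ (finite with nonabelian, non-quaternion $2$-Sylow) is incompatible with the geometric input that is actually needed.

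The paper's mechanism is different. It takes $X=A\# g(S^2\times S^2)$ with $A$ aspherical, stably framed, with $2$-torsion in $H_1(\pi;\Z)$, and with $\pi_1(\Diffeo^+(A))\to\pi_1(\operatorname{hAut}^+(A))$ surjective (e.g.\ $A=\Sigma\times S^1$ for suitable hyperbolic $\Sigma$). Via the diagram \eqref{diagram:master-diagram} it shows that $\delta_*\circ\mft_*\circ\KS\circ\alpha$ factors through the $\PL$ normal invariant of loops of homotopy automorphisms of $A$, which vanishes because every such loop is realised by diffeomorphisms (the circle action generates the centre of $\pi_1(A)$). Since $\delta_*\circ\mft_*\circ\KS$ surjects onto the nonzero group $\ker\big(H_1(A;\Z)\xrightarrow{\cdot 2}H_1(A;\Z)\big)$, some class escapes $A_X$. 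Note that the paper's examples violate hypothesis \ref{it:main-guarantee2} of Theorem~\ref{thm:main-guarantee} ($H_1(\pi;\Z_{(2)})$ has torsion), not hypothesis \ref{it:main-guarantee1}: the detecting invariant is the Bockstein $\delta_*$, not $I_2$. Finally, your reduction for the last sentence is more complicated than necessary: by Gabai's Theorem~\ref{thm:gabai}, smoothly stably isotopic to the identity already implies smoothly pseudo-isotopic to the identity on $X$ itself, with no further stabilisation.
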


To prove this, we have to show that $\KS(F) \neq 0$ for every topological pseudo-isotopy $F$ between~$f$ and $\Id$.
The manifolds $X$ in Theorem~\ref{thm:main-example} have the form $X = A \# g(S^2 \times S^2)$, where $A$ is an aspherical~$4$-manifold and $g$ is chosen large enough for Theorem \ref{thm:main-realise} to apply. The manifolds $A$ we use can be made explicit:~see Example~\ref{ex:hyperbolic}. On the other hand, the diffeomorphisms $f$ we construct are inexplicit. The topological pseudo-isotopies $F$ of $f$ are such that $\KS(F)$ remains nontrivial under the composition
\[
H_2(X ; \Z/2) \lra H_2(\pi ; \Z/2) \overset{\delta_*}\lra H_1(\pi ; \Z),
\]
where $\delta_*$ indicates the homology Bockstein for the coefficient sequence $0\to\Z\overset{2}\to\Z\to\Z/2\to 0$. In particular, $H_1(\pi ; \Z_{(2)})$ has torsion, in contrast with \cref{thm:main-guarantee}.

\subsection{Discussion}
For a pair of diffeomorphisms of a compact smooth 4-manifold, the following implications hold between the different notions of isotopy rel.\ boundary.
\[\text{Smoothly stably isotopic} \, \Rightarrow \, \text{Smoothly pseudo-isotopic} \, \Rightarrow \, \text{Top.\ pseudo-isotopic} \, \Leftarrow \, \text{Top.\  isotopic.}
\]
The first implication is due to Gabai~\cite{Gabai-22}, while the others are immediate from the definitions. \cref{cor:mainD} yields some cases where the first two implications can be reversed and \cref{question:motivation} has a positive answer.

There is not currently a useful theory for improving topological pseudo-isotopies of non-simply connected 4-manifolds to topological isotopies, but if such a theory could be developed, then we would hope it could be used to upgrade our examples from \cref{thm:main-example} and produce diffeomorphisms topologically isotopic to the identity, but not smoothly stably so.

\vspace{1ex}

\noindent\textbf{Organisation.}
In \cref{section:reformulation} we reformulate the main theorems in terms of block automorphisms, which will aid us to structure the proofs.
\cref{section:proofs-A-B} contains the proofs of \cref{thm:main-invariant} and \cref{cor:main-B}.
In \cref{sec:proof-thm-C} we prove \cref{thm:main-guarantee}, in
\cref{sec:proof-realisation-thm} we prove \cref{thm:main-realise}, and finally in
\cref{sec:proof-thm-main-example} we construct the examples promised in \cref{thm:main-example}.

\vspace{1ex}

\noindent\textbf{Acknowledgements.}
We thank Jim Davis for generously discussing $\mathbb{L}$-spectra with us.
MP was partially supported by EPSRC New Investigator grant EP/T028335/2 and EPSRC New Horizons grant EP/V04821X/2.
ORW was partially supported by the ERC under the European Union’s Horizon 2020 research and innovation programme (grant agreement No.\ 756444) and by the Danish National Research Foundation through the
Copenhagen Centre for Geometry and Topology~(DNRF151).

\vspace{1ex}

\noindent\textbf{Conventions.} For pointed topological spaces $A$ and $B$, the notation $[A,B]$ will indicate the set of pointed continuous maps $A\to B$, up to pointed homotopy. As a consequence, $[A_+,B]$ denotes unbased homotopy classes of maps from $A$ to $B$. For spectra $L$ and $K$, the notation~$[L,K]$ will indicate morphisms $L\to K$, up to homotopy. The sense in which the notation~$[-,-]$ is being used will always be clear from the objects in question. For maps of spectra $f,g\colon L\to K$, we will write~$f-g\colon L\to K$ to be any representative map of spectra for the homotopy class $[f]-[g]\in[L,K]$. A map $f \colon X \to X$ from a manifold to itself is \emph{rel.\ boundary} if $f|_{\partial X} = \Id_{\partial X}$.

\section{Reformulation via block automorphisms}\label{section:reformulation}

We will now reformulate Theorems~\ref{thm:main-invariant},~\ref{thm:main-guarantee},~\ref{thm:main-realise}, and~\ref{thm:main-example} in a unified framework that will make proofs clearer, and illuminate the structure of the general problem of smoothing pseudo-isotopies.

\subsection{Block automorphism groups}\label{sec:block}

Let $\pseudoHomeo^+_\partial(X)$ and $\pseudoDiffeo^+_\partial(X)$ denote the geometric realisations of the Kan semi-simplicial groups of orientation-preserving block self-homeomorphisms and block self-diffeomorphisms of $X$ rel.~boundary, respectively; see e.g.~\cite[Appendix I, \textsection3]{BLR} with corrections in the smooth case in \cite[Section 2.2]{HLLRW}. Eliding some technical details regarding collars in the smooth case, the $p$-simplices consist of homeomorphisms (or diffeomorphisms) $\phi\colon X \times \Delta^p \to X \times \Delta^p$ which are the identity on $(\partial X) \times \Delta^p$, preserve orientation, and send~$X \times \sigma$ to $X \times \sigma$ for every face $\sigma \subseteq \Delta^p$. In particular, a 0-simplex is precisely a homeomorphism (or diffeomorphism) of $X$, and a 1-simplex is precisely a topological (or smooth) pseudo-isotopy. Thus the groups of path components,
\[
\pi_0 (\pseudoHomeo^+_\partial(X)) \quad\text{and}\quad \pi_0(\pseudoDiffeo^+_\partial(X)),
\]
are the groups of orientation-preserving self-homeomorphisms up to topological pseudo-isotopy (or self-diffeomorphisms up to smooth pseudo-isotopy). In other words, they are the \emph{pseudo-mapping class groups}.

There is an analogous Kan semi-simplicial monoid $\hAut^+_\partial(X)$, whose $p$-simplices are as above but relaxing the condition that $\phi$ be a homeomorphism, by only asking that it be a homotopy equivalence; see e.g.\ \cite[Section 1.5]{Krannich} for a discussion, which in particular explains why the geometric realisation of this semi-simplicial set is weakly equivalent to the topological space ${\operatorname{hAut}\mkern 0mu}^+_\partial(X)$ of boundary- and orientation-preserving homotopy automorphisms of $X$. Further insisting that $\phi$ be a simple homotopy equivalence defines a Kan semi-simplicial monoid $\sAut^+_\partial(X)$, to which the same discussion applies: the  geometric realisation of $\sAut^+_\partial(X)$ corresponds to those path components ${\operatorname{sAut}\mkern 0mu}^+_\partial(X) \subseteq {\operatorname{hAut}\mkern 0mu}^+_\partial(X)$ consisting of simple homotopy automorphisms. 
In this model there are forgetful maps
\[
\pseudoDiffeo^+_\partial(X) \lra \pseudoHomeo^+_\partial(X) \lra \sAut^+_\partial(X) \lra \hAut^+_\partial(X)
\]
of Kan semi-simplicial monoids.

\subsection{Pseudo-isotopy classes of topological pseudo-isotopies}
We wish to study topological pseudo-isotopies from the identity to a diffeomorphism, and this will be clearest if we carefully define a certain set of such.

\begin{definition}
For a smooth 4-manifold $X$ we consider topological pseudo-isotopies $F\colon X \times I \to X \times I$ with $F|_{X \times \{0\}} = \Id_X$ and such that $f := F|_{X \times \{1\}}$ is a diffeomorphism, all rel.~$\partial X$. We consider these up to the equivalence relation of a further pseudo-isotopy: define $F_0 \sim F_1$ if there is a homeomorphism $G$ of $X \times I \times I$ which restricts to $F_i$ on $X \times I \times \{i\}$, to the identity on~$\big(X \times \{0\} \times I\big) \cup \big((\partial X) \times I \times I\big)$, and to a diffeomorphism on $X \times \{1\} \times I$ (which is then a smooth pseudo-isotopy from $f_0$ to $f_1$). We write $Q(X)$ for the set of such equivalence classes, and note that it is a group under composition.

\begin{figure}[h]
\begin{tikzpicture}[scale=0.8]
\draw[thick, fill=white] (0,0) rectangle (2,2);
\node at (1,1) {$G$};
\node[left] at (0,1) {$\Id$};
\node[below] at (1,0) {$F_0$};
\node[below] at (2,0) {$f_0$};
\node[right] at (2,1) {$\cong_{C^\infty}$};
\node[above] at (1,2) {$F_1$};
\node[above] at (2,2) {$f_1$};
\end{tikzpicture}
\caption{Schematic for $X\times I\times I$ with the $X$ direction suppressed. Maps defining the equivalence relation $(f_0,F_0)\sim (f_1,F_1)$ for $Q(X)$ are depicted.}\label{fig:QX}
\end{figure}
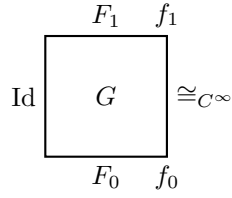
\end{definition}

For $[F] \in Q(X)$, we explained in the introduction how to produce a smooth manifold $\partial(X \times I \times I)_F$ by pulling back the smooth structure $\sigma$ on $X \times I \times \{1\}$ using the homeomorphism $F$. Then $X \times I \times I$ is a topological $6$-manifold with a given smooth structure on its boundary so has a Kirby--Siebenmann invariant
\[
\KS(F) := \ks(X \times I \times I, \partial(X \times I \times I)_F) \in H^4(X \times I \times I, \partial(X \times I \times I) ; \Z/2) \cong_{PD}
\]
In the following section, once we describe the details of smoothing theory, we will prove the following.

\begin{lemma}\label{lem:ksConcordanceInvariant}
The function  $[F] \mapsto \KS(F)\colon Q(X) \to H_2(X;\Z/2)$ is well-defined and is a homomorphism.
\end{lemma}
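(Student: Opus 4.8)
The plan is to verify the two assertions—well-definedness and homomorphism property—by relating the relative Kirby--Siebenmann invariant to the obstruction-theoretic picture from smoothing theory, which will be spelled out in the following section.

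\textbf{Well-definedness.} Suppose $F_0 \sim F_1$ via a homeomorphism $G$ of $X \times I \times I \times I$ (with the new $I$-coordinate parametrising the concordance). First I would pull back the standard smooth structure $\sigma$ on $X \times I \times I \times \{1\}$ along $G$ to obtain a smooth structure on part of the boundary of $X \times I \times I \times I$; together with the product smooth structures already present on $X \times \{0\} \times I \times I$, on $(\partial X) \times I \times I \times I$, and on $X \times I \times \{0\} \times I$, this gives $X \times I \times I \times I$ the structure of a topological $7$-manifold with a smooth structure on a codimension-$0$ submanifold of its boundary whose complement is exactly $\partial(X \times I \times I)_{F_0} \sqcup \partial(X\times I\times I)_{F_1}$ (with a reflection on one side). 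The relative $\ks$-invariant of this $7$-manifold lives in $H^4$ of a pair that deformation retracts appropriately, and its image under the two boundary restriction maps recovers $\KS(F_0)$ and $\KS(F_1)$; since $X \times I \times I \times I$ is itself a product hence the obstruction restricted from the total space is consistently defined, naturality of $\ks$ under restriction to the two ends forces $\KS(F_0) = \KS(F_1)$ in $H_2(X;\Z/2)$. The key input here is the behaviour of the relative Kirby--Siebenmann invariant under gluing/restriction, i.e.\ that it is natural with respect to inclusions of boundary pieces, which is standard smoothing theory (Kirby--Siebenmann).

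\textbf{Homomorphism property.} Given $[F], [F'] \in Q(X)$ with end-diffeomorphisms $f, f'$, their composite in $Q(X)$ is the concatenation $F \cdot F'$ (reparametrise and stack the pseudo-isotopies, using that $f$ is a diffeomorphism so that the smooth structures match up on the nose at the seam). Here I would stack the two $6$-manifolds-with-smooth-boundary $X \times [0,1] \times I$ and $X \times [1,2] \times I$ along $X \times \{1\} \times I$. On the seam the two boundary smooth structures are $F^*\sigma = f^*\sigma$ coming from the right end of the first block and the product $\sigma$ from the left end of the second; these are literally equal because $f$ is a diffeomorphism, so the glued object is again a topological $6$-manifold with a smooth structure on its boundary, namely $\partial(X \times [0,2] \times I)_{F \cdot F'}$ after rescaling. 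Additivity of the relative $\ks$-invariant under such a gluing along a codimension-$1$ piece of the boundary (a Mayer--Vietoris / excision argument in $H^4(\cdot, \partial)$, together with the fact that $\ks$ of the glued-in collar vanishes) gives $\KS(F \cdot F') = \KS(F) + \KS(F')$ once we identify all three groups with $H_2(X;\Z/2)$ compatibly; the identification is via Poincar\'e--Lefschetz duality for $X \times I \times I$ rel.\ boundary, which is the same for all three since the underlying manifold is the same up to the obvious rescaling homeomorphism of the $I$-factor.

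\textbf{Main obstacle.} The routine parts—the Mayer--Vietoris additivity and the naturality—are standard, but the step requiring genuine care is checking that the Poincar\'e--Lefschetz duality isomorphisms $H^4(X \times I \times I, \partial; \Z/2) \cong H_2(X;\Z/2)$ used implicitly in the three different pieces are the \emph{same} isomorphism, so that "additivity of $\ks$" translates into "additivity in $H_2(X;\Z/2)$" rather than up to some automorphism. Concretely, one must track the reparametrisations of the $I \times I$ block used to form composites and check they act trivially on $H_2(X;\Z/2)$ after duality, which they do since they are isotopic to the identity rel.\ the relevant boundary faces. I would also need the precise form of the relative $\ks$-invariant and its additivity as recorded in the next section, so the cleanest exposition defers the bulk of the argument to there; this lemma's proof is then a short application of those tools.
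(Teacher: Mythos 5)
Your overall strategy (concordance invariance for well-definedness, stacking plus additivity of the obstruction for the homomorphism property) is the same as the paper's, which proves the stronger statement for $\Sm\colon Q(X)\to[\tfrac{X\times I\times I}{\partial},\BB(\Top/\OO)]$ and then postcomposes with $\ks$ and Poincar\'e duality. But two points need attention.

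For well-definedness your mechanism is garbled: the smooth structure you build from $G$ lives only on $\partial(X\times I\times I)\times I\subseteq\partial(X\times I^3)$, not on all of $\partial(X\times I^3)$, so the $7$-manifold has no relative $\ks$-invariant to restrict. What you actually have is a \emph{concordance} between the boundary smooth structures $\partial(X\times I\times I)_{F_0}$ and $\partial(X\times I\times I)_{F_1}$, and what you need is that the smoothing obstruction depends only on the concordance class of the boundary data --- which the paper proves by exhibiting an explicit homotopy of lifting problems over $(X\times I\times I)\times I$. This is fixable, but "naturality of $\ks$ under restriction" is not the right tool.

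The genuine gap is in the homomorphism step. The group law on $Q(X)$ is \emph{composition} of self-homeomorphisms of $X\times I$, and your stacking in the pseudo-isotopy direction (along $X\times\{1\}\times I$) computes the invariant of a \emph{concatenation} $F\ast((f\times\id)\circ F')$, which ends at $f\circ f'$ rather than at $f'\circ f$. You assert but do not prove that this concatenation represents the product in $Q(X)$; that identification is an Eckmann--Hilton-type interchange argument and is a real step. The paper avoids it by stacking in the other interval direction, which realises $\partial(X\times I\times I)_{F'\circ F}$ on the nose; the price is that the second block then carries the twisted structure $(F\times\id_I)^*\partial(X\times I\times I)_{F'}$, and Lemma~\ref{lem:excrosseye} is needed to see that $F$ is homotopic to the identity as a map of pairs so the twist is invisible. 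Note also that additivity of the relative obstruction under stacking is not a bare Mayer--Vietoris statement; the paper derives it from the co-$H$-space structure of $\Sigma^2(X/\partial X)$, and this also resolves the duality-compatibility worry in your last paragraph. One thing your formulation does buy: by working directly with the $H^4$-class and the co-$H$ structure of the source, you never need to know that $\ks\colon\BB(\Top/\OO)\to K(\Z/2,4)$ is an $H$-map, a point the paper has to argue separately via Postnikov truncations.
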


The proof will be given after \cref{lem:stronger}.

\subsection{The reformulation}\label{sec:reformulation}

The definition of $Q(X)$ was obtained by spelling out the description of~$\pi_0$ of the homotopy fibre of the map $\pseudoDiffeo^+_\partial(X) \to \pseudoHomeo^+_\partial(X)$. Thus there is an exact sequence
\begin{equation}\label{eqn:key-long-exact-seq}
\pi_1(\pseudoDiffeo^+_\partial(X)) \lra \pi_1(\pseudoHomeo^+_\partial(X)) \overset{\alpha}\lra Q(X) \overset{\beta}\lra \pi_0(\pseudoDiffeo^+_\partial(X)) \overset{\gamma}\lra \pi_0(\pseudoHomeo^+_\partial(X)).
\end{equation}
Theorems~\ref{thm:main-invariant},~\ref{thm:main-guarantee},~\ref{thm:main-realise}, and~\ref{thm:main-example} in the introduction can be phrased in terms of the behaviour of this exact sequence, and the map $\KS\colon Q(X) \to H_2(X;\Z/2)$, as follows.

\begin{theorem}\label{thm:main-technical}
Let $X$ be a smooth, connected, compact 4-manifold.
\begin{enumerate}[label=(\Alph*)]
\item\label{it:main-technicalA} The homomorphism $\KS\colon Q(X) \to H_2(X;\Z/2)$ is injective.

\setcounter{enumi}{2}

\item\label{it:main-technicalC} Suppose $X$ is orientable and the fundamental group $\pi = \pi_1(X)$ is such that the map
\[
I_2\colon H_2(\pi ; \Z_{(2)}) \lra L_6^s(\Z[\pi])_{(2)}
\]
specified in Definition~\ref{defn:I2} is trivial. Then the image of the composition
\[
\pi_1(\pseudoHomeo^+_\partial(X)) \overset{\alpha}\lra Q(X) \overset{\KS}\lra H_2(X;\Z/2)
\]
contains the kernel $\ker\big(H_2(X;\Z/2)\xrightarrow{\delta_*}H_1(X;\Z_{(2)})\big)$ of the Bockstein operator $\delta_*$ for the coefficient sequence $0\to\Z_{(2)}\overset{2}\to\Z_{(2)}\to\Z/2\to 0$.
\setcounter{enumi}{4}
\item There is a $g \geq 0$ such that for the manifold $X' := X \# g(S^2 \times S^2)$, the homomorphism $\KS\colon Q(X') \to H_2(X';\Z/2)$ is a bijection.

\item There exists an $X$ for which $\gamma$ is not injective, i.e.~there is a diffeomorphism of $X$ which is topologically pseudo-isotopic to the identity but not smoothly so.
\end{enumerate}
\end{theorem}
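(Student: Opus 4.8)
The plan is to deduce the final assertion of \cref{thm:main-technical} from the realisation statement \cref{thm:main-realise} together with an \emph{upper} bound on the image of $\alpha$ in $Q(X)$, complementary to the lower bound in part~\ref{it:main-technicalC}. Exactness of \eqref{eqn:key-long-exact-seq} identifies $\ker(\gamma)$ with $\im(\beta)\cong Q(X)/\ker(\beta)=Q(X)/\im(\alpha)$, so $\gamma$ fails to be injective precisely when $\im(\alpha)\subsetneq Q(X)$. Concretely: if $[F]\in Q(X)$ is a topological pseudo-isotopy from $\Id_X$ to a diffeomorphism $f$ and $[F]\notin\im(\alpha)$, then $\beta([F])=[f]\neq 0$ in $\pi_0(\pseudoDiffeo^+_\partial(X))$ while $\gamma([f])=\gamma\beta([F])=0$, so $f$ is a diffeomorphism that is topologically but not smoothly pseudo-isotopic to the identity. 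Since $\KS\colon Q(X)\to H_2(X;\Z/2)$ is a homomorphism (\cref{lem:ksConcordanceInvariant}), it suffices to produce a class in $H_2(X;\Z/2)$ lying in the image of $\KS$ but not in $\KS(\im(\alpha))$.

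The key input — and the step I expect to be the main obstacle — is the inclusion
\[
\KS\big(\im(\alpha)\big)\ \subseteq\ \ker\Bigl(H_2(X;\Z/2)\longrightarrow H_2(\pi;\Z/2)\xrightarrow{\ \delta_*\ }H_1(\pi;\Z)\Bigr),\qquad \pi:=\pi_1(X),
\]
where the first map is induced by $X\to B\pi$ and $\delta_*$ is the Bockstein of $0\to\Z\xrightarrow{2}\Z\to\Z/2\to 0$. To prove this I would unwind the connecting homomorphism $\alpha$: a class in $\im(\alpha)$ is represented by a topological pseudo-isotopy $\Psi\colon X\times I\to X\times I$ from $\Id_X$ to $\Id_X$ — equivalently, a self-homeomorphism of the $5$-manifold $N:=X\times I$ that is the identity on all of $\partial N$ — and $\KS(\Psi)$ is the obstruction to isotoping $\Psi$ rel $\partial N$ to a diffeomorphism, a class in $[N/\partial N,\Top/\OO]$. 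Since $\Top/\OO$ is $3$-connected with $\pi_3(\Top/\OO)\cong\Z/2$ and $\pi_4(\Top/\OO)=\pi_5(\Top/\OO)=0$, this group is $H^3(N,\partial N;\Z/2)\cong H^2(X,\partial X;\Z/2)\cong H_2(X;\Z/2)$, compatibly with the definition of $\KS$. Establishing the inclusion then amounts to comparing the smooth and topological block structure spaces sitting over $\sAut^+_\partial(X)$: the discrepancy between them is governed by $[N/\partial N,\Top/\OO]$ together with the $L$-theory assembly map, and one must check that the classes realised by genuine self-homeomorphisms of $N$, after running through this comparison, have trivial Bockstein image in $H_1(\pi;\Z)$. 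This is where the behaviour of the assembly map — in particular the map $I_2$ of \cref{defn:I2} — and the $2$-local homology of $\pi$ enter, and it is the extra surgery-theoretic ingredient alluded to after \cref{thm:main-realise}.

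Granting this inclusion, I would conclude as follows. Choose a closed, oriented, aspherical $4$-manifold $A$ whose fundamental group $\pi$ has $2$-torsion in $H_1(\pi;\Z)$ — for example one of the hyperbolic manifolds of \cref{ex:hyperbolic} — so that $\delta_*\colon H_2(\pi;\Z/2)\to H_1(\pi;\Z)$ is nonzero. Set $X:=A\#g(S^2\times S^2)$ with $g$ large enough that \cref{thm:main-realise} applies. Because $A=B\pi$, the map $H_2(X;\Z/2)\to H_2(\pi;\Z/2)$ is split surjective (it annihilates the $(\Z/2)^{2g}$ coming from the $S^2\times S^2$ summands and restricts to an isomorphism on $H_2(A;\Z/2)$), so the composite $H_2(X;\Z/2)\to H_2(\pi;\Z/2)\xrightarrow{\delta_*}H_1(\pi;\Z)$ is still nonzero; fix $x\in H_2(X;\Z/2)$ with nonzero image. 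By \cref{thm:main-realise}, $x=\KS(F)$ for some topological pseudo-isotopy $F$ from $\Id_X$ to a diffeomorphism $f$ of $X$. By the inclusion, $x\notin\KS(\im(\alpha))$, hence $[F]\notin\im(\alpha)$, and the first paragraph shows that $f$ is topologically but not smoothly pseudo-isotopic to $\Id_X$; in particular $\gamma$ is not injective. Finally, by Gabai's theorem \cite[Theorem~2.5]{Gabai-22}, such an $f$ is not smoothly stably isotopic to $\Id_X$ either, which gives \cref{thm:main-example}. Everything apart from the inclusion of the second paragraph is routine bookkeeping with \eqref{eqn:key-long-exact-seq} and the choice of $A$; pinning down the image of $\alpha$ inside $[N/\partial N,\Top/\OO]$ — controlling, rather than merely bounding below, the $L$-theoretic contributions — is the substantive analytic point.
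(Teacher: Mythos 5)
Your reduction of the final assertion to the inclusion $\KS(\im\alpha)\subseteq\ker\bigl(H_2(X;\Z/2)\to H_2(\pi;\Z/2)\xrightarrow{\delta_*}H_1(\pi;\Z)\bigr)$, followed by an appeal to Theorem~\ref{thm:main-realise} to produce a class outside this kernel, is exactly the paper's strategy (diagram~\eqref{diagram:master-diagram}). But that inclusion is the entire content of the proof, and you do not prove it: you flag it as ``the main obstacle'' and gesture at a comparison of block structure spaces ``governed by \dots the $L$-theory assembly map'', in particular $I_2$. That is the wrong mechanism. The assembly map and $I_2$ are what the paper uses for the \emph{lower} bound on $\KS(\im\alpha)$ in part~\ref{it:main-technicalC}; the \emph{upper} bound needed here comes from elsewhere. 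The paper factors $\KS\circ\alpha$ through the $\PL$ normal invariant of $X\times I$ (diagram~\eqref{diagram:Lshaped}), identifies the map $[X_+,\Omega(\Top/\PL)]\to[X_+,\Omega(\G/\PL)]$ with the homology Bockstein (Lemma~\ref{lem:bockstein}), and then kills the resulting class by pushing it down to the aspherical summand $A$, where the $\PL$ normal invariant of every loop of homotopy automorphisms vanishes because $\pi_1(\Diffeo^+(A))\to\pi_1(\hAut^+(A))$ is surjective --- condition~\ref{it:Centre} on $A$, which your $A$ must satisfy and which you never invoke. The push-down step, commutativity of the square~\eqref{eq:etaUmkehr} (functoriality of $\PL$ normal invariants under the umkehr map $\mft_!$ of the degree-one map $\mft\colon X\to A$), is the technical heart: it requires $A$ and $X$ to be stably framed so that $\bS$-orientations and umkehr maps exist, and occupies all of Section~\ref{sub:functoriality-PL-normal-wrt-Post-truncation} (tangential structure sets, Spanier--Whitehead duality, and a comparison of the two inequivalent infinite loop space structures on $\Omega^{\infty+1}\bS^0\simeq\Omega\G$). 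None of this is present, or correctly anticipated, in your sketch, so the argument has a genuine gap at its only substantive point.

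Two further remarks. First, $\Top/\OO$ is $2$-connected with $\pi_3\cong\Z/2$, not $3$-connected as you write; as stated, your computation of $[N/\partial N,\Top/\OO]$ would give zero. Second, your proposal addresses only the last clause of Theorem~\ref{thm:main-technical}: parts \ref{it:main-technicalA} and \ref{it:main-technicalC} and the stable bijectivity statement are left unproved, and the last of these is used as an input via Theorem~\ref{thm:main-realise}.
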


\begin{remark}$\,$
\begin{enumerate}[label=(\roman*)]
\item Theorem \ref{thm:main-technical} \ref{it:main-technicalA} sounds a little weaker than Theorem \ref{thm:main-invariant}, as the latter says that if $\KS(F)=0$ then $F$ is topologically \emph{isotopic} to a smooth pseudo-isotopy, whereas the former only says that $F$ is topologically \emph{pseudo-isotopic} to a smooth pseudo-isotopy. But in fact they are equivalent, as we will show in Theorem \ref{thm:obstruction-thy}.
\item Theorem~\ref{thm:main-technical}~\ref{it:main-technicalA} and~\ref{it:main-technicalC} imply Theorem~\ref{thm:main-guarantee}, as we explain below.
Theorem~\ref{thm:main-technical}~\ref{it:main-technicalC} is potentially stronger than Theorem~\ref{thm:main-guarantee}, as it could also be used to analyse particular diffeomorphisms of $X$ when $H_1(\pi;\Z_{(2)})$ has $2$-torsion.
\end{enumerate}
\end{remark}

In the following sections we will prove the results in the introduction, but we will usually prove them in the form given in Theorem \ref{thm:main-technical}.
For \cref{thm:main-guarantee} we explain the connection next.

\begin{proof}[Proof of \cref{thm:main-guarantee} assuming Theorem~\ref{thm:main-technical}~\ref{it:main-technicalA} and~\ref{it:main-technicalC}]
The Postnikov truncation $\mft\colon X\to \BB\pi$ determines a commutative diagram
\[
\begin{tikzcd}
H_2(X;\Z/2)\ar[r, "\delta_*"]\ar[d, two heads, "\mft_*"]
& H_1(X;\Z_{(2)})\ar[d, "\cong"', "\mft_*"]
\\
H_2(\pi;\Z/2)\ar[r, "\delta_*"]
& H_1(\pi;\Z_{(2)})
\end{tikzcd}
\]
where surjectivity of $\mft_*$ on $H_2$ is easily seen by considering a model for $\BB\pi$ consisting of a cell complex for $X$ with additional cells of dimension 3 or greater attached. The hypothesis that $H_1(\pi;\Z_{(2)})$ has no $2$-torsion is equivalent to the lower $\delta_*$ being the 0 map. A diagram chase shows this is equivalent to the upper $\delta_*$ being the 0 map. By Theorem~\ref{thm:main-technical}~\ref{it:main-technicalC}, the composition $\KS\,{\circ}\,\alpha$ is then surjective, so in particular $\KS$ is surjective. Then by Theorem~\ref{thm:main-technical}~\ref{it:main-technicalA}, $\KS$ is an isomorphism, and so $\alpha$ is surjective and hence $\gamma$ is injective by exactness of \ref{eqn:key-long-exact-seq}. The latter is equivalent to saying that every self-diffeomorphism of $X$ which is topologically pseudo-isotopic to the identity is in fact smoothly pseudo-isotopic to the identity.
\end{proof}

\section{Proof of Theorem \ref{thm:main-invariant} and Corollary \ref{cor:main-B}}\label{section:proofs-A-B}

\subsection{Smoothing theory}\label{sec:smoothing}

Let $Y$ be a compact $d$-dimensional topological manifold whose boundary $\partial Y$ is endowed with a fixed smooth structure. The maps classifying the topological tangent microbundle of $Y$, and the once-stabilised tangent vector bundle of $\partial Y$, yield the solid arrows in the following commutative diagram, where the right-hand map is modelled by a fibration:
\begin{equation}\label{eqn-smoothing-thy}
\begin{tikzcd}
\partial Y  \ar[rr, "T (\partial Y) \oplus \varepsilon^1"] \ar[d] && \BO(d) \ar[d] \\
Y  \ar[rr,"\tau_Y"] \ar[urr,dashed] && \BTop(d).
\end{tikzcd}
\end{equation}
The fundamental theorem of smoothing theory is then as follows \cite[Essay V]{Kirby-Siebenmann:1977-1}.

\begin{theorem}[Kirby--Siebenmann]
For $d \geq 5$, the space of smooth structures on $Y$ extending that on $\partial Y$ is homotopy equivalent to the space of dashed maps making both triangles commute.
\end{theorem}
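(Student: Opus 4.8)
This is the fundamental theorem of smoothing theory of Kirby--Siebenmann (building on Lashof--Rothenberg and Morlet), so the plan is to recall the structure of its proof rather than to reprove it; the details occupy Essays~I--V of \cite{Kirby-Siebenmann:1977-1}, and for our purposes ($Y = X \times I \times I$, so $d = 6$) we will invoke it as a black box. Write $\Sm_\partial(Y)$ for the Kan complex of smooth structures on $Y$ restricting to the fixed one on $\partial Y$, and $\mathrm{Lift}_\partial(Y)$ for the space of dashed lifts in \eqref{eqn-smoothing-thy}, i.e.\ lifts of $\tau_Y$ along the fibration $\BO(d) \to \BTop(d)$ agreeing with the given bundle map over $\partial Y$. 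Differentiating a smooth structure gives a natural comparison map $\Sm_\partial(Y) \to \mathrm{Lift}_\partial(Y)$, and the assertion is that this map is a weak homotopy equivalence.

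The first step is to promote both sides to \emph{homotopy sheaves} on $Y$: the presheaves $U \mapsto \Sm(U)$ and $U \mapsto \mathrm{Lift}(U)$, defined on open subsets of $\operatorname{int}(Y)$ (with the boundary condition handled by working in a collar), should satisfy descent for open covers. For lifts this is formal, as section spaces of a fibration form a homotopy sheaf. For smoothings this is the geometric heart of the matter: descent here is equivalent to \emph{local contractibility of the space of smoothings} and to handle-by-handle straightening, and it rests on the Product Structure Theorem together with the ``concordance implies isotopy'' phenomenon. \emph{This is precisely where $d \geq 5$ is used} --- handle straightening, engulfing, and the Whitney trick all require the high-dimensional range, and the theorem is false for $d = 4$ (the existence of uncountably many exotic $\R^4$'s already shows that $\Sm(\R^4)$ does not have the homotopy type the theorem would predict).

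Granting that both functors are homotopy sheaves and that differentiation is a morphism between them, it then suffices to check that $\Sm(U) \to \mathrm{Lift}(U)$ is an equivalence when $U$ ranges over a basis closed under the relevant operations --- concretely, $U = \R^d$ and $U = \R^{d-1} \times [0,\infty)$. For these, $\tau_U$ is nullhomotopic, so $\mathrm{Lift}(U) \simeq \mathrm{Map}(U, \Top(d)/\OO(d))$ where $\Top(d)/\OO(d) = \hofib(\BO(d) \to \BTop(d))$, and the base case reduces to the classical statement that the space of smoothings of $\R^d$ that are standard near infinity is homotopy equivalent to $\Top(d)/\OO(d)$ --- equivalently, that smoothings of $\R^d$ are unique up to isotopy for $d \geq 5$. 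With this in hand, the homotopy-sheaf comparison lemma upgrades the base case to arbitrary compact $Y$ with the stated boundary condition.

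The main obstacle is thus not the formal part but the geometric core --- local contractibility of the smoothing space and the $\R^d$ base case --- both of which genuinely require $d \geq 5$ and both of which are exactly the content of \cite{Kirby-Siebenmann:1977-1}. Since we only ever need the case $d = 6$, we will simply cite the theorem in the stated form.
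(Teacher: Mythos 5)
Your proposal is correct and ultimately does what the paper does: the paper states this as the fundamental theorem of smoothing theory and cites \cite{Kirby-Siebenmann:1977-1}*{Essay V} without proof, exactly as you invoke it as a black box (your sketch of the homotopy-sheaf/handle-straightening structure and of where $d\geq 5$ enters is a reasonable and accurate summary, but is not required by the paper). No gap to report.
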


In particular, there exists a smooth structure on $Y$ extending that on $\partial Y$ if and only if there exists a dashed map making both triangles commute. This lifting problem may be made more practical by considering the following diagram.
\begin{equation*}
\begin{tikzcd}
\BO(d) \ar[d] \rar & \BO \dar \rar & \mathrm{E}(\Top/\OO) \simeq * \dar\\
\BTOP(d) \rar & \BTop \rar{\theta} & \BB(\Top/\OO)
\end{tikzcd}
\end{equation*}
The right-hand square is formed using the fact that $\BO \to \BTop$ is an (infinite) loop map so deloops to give $\BB(\Top/\OO)$. The down-right  composition consists of two maps in a homotopy fibre sequence, so the square is cartesian, i.e.\ is a homotopy pull-back square.  The left-hand square is formed by stabilisation, and is $(d+1)$-cartesian \cite[p.\ 246 (4)]{Kirby-Siebenmann:1977-1} as long as $d \geq 5$.
(Recall we say that a square is  $n$-cartesian if the canonical map from the top left entry to the homotopy pullback of the other three entries  is $n$-connected, or equivalently if the total homotopy fibre is~$(n-1)$-connected.) 
Pasting these squares with \eqref{eqn-smoothing-thy}, it follows that for $d \geq 5$ there is a smooth structure on $Y$ extending that on $\partial Y$ if and only if there is dashed map in the diagram
\begin{equation*}
\begin{tikzcd}
\partial Y  \ar[rr] \ar[d] && \mathrm{E}(\Top/\OO) \simeq * \ar[d] \\
Y  \ar[rr,"\theta \,{\circ}\, \tau_Y"'] \ar[urr,dashed] && \BB(\Top/\OO)
\end{tikzcd}
\end{equation*}
making both triangles commute. That is, the solid arrows define a class
$$\mathrm{sm}(Y, \partial Y) \in \left[Y/\partial Y, \BB(\Top/\OO)\right]$$
which is trivial if, and for $d \geq 5$ only if, $Y$ admits a smooth structure extending that on $\partial Y$.
There is a map
$$\ks\colon \BB(\Top/\OO) \lra K(\Z/2,4)$$
which is 8-connected. This follows from \cite[p.\ 246 (5)]{Kirby-Siebenmann:1977-1} and the fact that the low-dimensional groups of homotopy spheres satisfy $\Theta_5=\Theta_6=0$. Post-composing $\mathrm{sm}(Y, \partial Y)$ with the map $\ks$ gives a class
\[
\ks(Y, \partial Y) \in H^4(Y, \partial Y ; \Z/2),
\]
whose vanishing is necessary, and for $5 \leq d \leq 7$ sufficient, for the existence of a smooth structure on $Y$ extending that on $\partial Y$. (For all $d \geq 5$ the vanishing of this class is necessary and sufficient for the existence of a $\PL$-structure on $Y$ extending that on $\partial Y$ induced by its smooth structure.)

\medskip

Finally, we use this discussion to prove Lemma \ref{lem:ksConcordanceInvariant}, and in fact prove something a little stronger.
As mentioned above $\BB(\Top/\OO)$ is an infinite loop space. For a based space $Z$, this determines a group structure on $[Z,\BB(\Top/\OO)]$.

\begin{lemma}\label{lem:stronger}
The function
\[
\Sm \colon Q(X) \lra \left[\tfrac{X \times I \times I}{\partial (X \times I \times I)}, \BB(\Top/\OO)\right];\qquad [F] \longmapsto \mathrm{sm}(X \times I \times I, \partial(X \times I \times I)_F)
\]
is well-defined and is a homomorphism.
\end{lemma}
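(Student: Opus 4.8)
The plan is to verify the two assertions in turn. \emph{Well-definedness:} I must show that if $F_0 \sim F_1$ in $Q(X)$, witnessed by a homeomorphism $G$ of $X \times I \times I \times I$ as in the definition, then $\Sm(F_0) = \Sm(F_1)$ in $[\tfrac{X \times I \times I}{\partial(X \times I \times I)}, \BB(\Top/\OO)]$. The idea is to run the smoothing-theory machinery one dimension up. Regard $W := X \times I \times I \times I$ as a $7$-dimensional topological manifold. Using $G$ to pull back the smooth structure $\sigma$ on the faces $X \times I \times \{1\} \times I$ and to pull back along $F_0, F_1$ on the faces $X \times I \times I \times \{i\}$, and leaving the smooth structure fixed on $X \times \{0\} \times I \times I$ and $(\partial X) \times I \times I \times I$, we equip a portion of $\partial W$ with a smooth structure. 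This portion is the union $(X \times I \times I \times \{0\}) \cup (X \times I \times \{1\} \times I) \cup (X \times I \times I \times \{1\})$, a codimension-zero submanifold of $\partial W$ which is homeomorphic to $X \times I \times I \times I$ and deformation retracts onto $X \times I \times I \times \{0\} \cup X \times I \times I \times \{1\}$; along these two end faces the smooth structures are respectively $\partial(X\times I\times I)_{F_0}$ and $\partial(X\times I\times I)_{F_1}$. The obstruction class $\mathrm{sm}$ of $W$ relative to this partially-smoothed boundary lives in a relative cohomology group, and its restrictions to the two ends recover $\mathrm{sm}(X\times I\times I, \partial(X\times I\times I)_{F_0})$ and $\mathrm{sm}(X\times I\times I, \partial(X\times I\times I)_{F_1})$; since the inclusions of the two ends into the smoothed part of $\partial W$ are homotopy equivalences of pairs (the product $I$-direction collapses), these two classes are identified under the isomorphisms on $[-, \BB(\Top/\OO)]$. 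Concretely, the classifying map $\theta \circ \tau_W$ together with the nullhomotopy over the smoothed boundary region exhibits a homotopy, in the homotopy fibre sequence $\mathrm{E}(\Top/\OO) \to \BB(\Top/\OO)$, between the two maps representing $\Sm(F_0)$ and $\Sm(F_1)$; this is exactly the concordance-invariance of the smoothing obstruction applied to the product cobordism $W$.

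\emph{Homomorphism property:} Composition in $Q(X)$ is given by stacking pseudo-isotopies: for $[F], [F'] \in Q(X)$ with $F|_{X\times\{1\}} = f$, the product $F \cdot F'$ is (after reparametrising) the pseudo-isotopy $X \times I \to X \times I$ obtained by first doing $F$ and then $(f \times \mathrm{id}_I) \circ F'$, so that the associated smooth structure on $X \times I \times I$ is a ``stacked'' one. I want to identify $\Sm(F \cdot F')$ with $\Sm(F) + \Sm(F')$ under the infinite-loop-space structure on $\BB(\Top/\OO)$. The key observation is that the collapse map $\tfrac{X\times I\times I}{\partial(X\times I\times I)} \to \tfrac{X\times I\times I}{\partial(X\times I\times I)} \vee \tfrac{X\times I\times I}{\partial(X\times I\times I)}$, pinching along a separating copy $X \times I \times \{1/2\}$, is precisely the comultiplication inducing the group operation on $[-, \BB(\Top/\OO)]$, since the $[0,1]$-coordinate being collapsed is the suspension coordinate that makes $[\tfrac{X\times I\times I}{\partial(X\times I\times I)}, -] = [\Sigma(\tfrac{X\times I}{\partial(X\times I)}), -]$ a group. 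The smoothing obstruction $\mathrm{sm}(X\times I\times I, \partial(X\times I\times I)_{F\cdot F'})$ is the pullback under this pinch map of the wedge of the two obstructions $\mathrm{sm}(\cdots)_F$ and $\mathrm{sm}(\cdots)_{F'}$: on the lower half of the $[0,1]$-coordinate the smoothing problem is that of $F$, and on the upper half it is a smoothing problem homeomorphic to that of $F'$ (conjugated by $f \times \mathrm{id}$, which does not affect the obstruction class since $f$ is a diffeomorphism of the already-smooth $X$). Assembling, this gives $\Sm(F\cdot F') = \Sm(F) + \Sm(F')$.

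Finally, \cref{lem:ksConcordanceInvariant} follows immediately: $\KS = \ks_* \circ \Sm$ where $\ks_*\colon [\tfrac{X\times I\times I}{\partial(X\times I\times I)}, \BB(\Top/\OO)] \to H^4(X\times I\times I, \partial(X\times I\times I);\Z/2) \cong_{PD} H_2(X;\Z/2)$ is induced by the infinite-loop (hence additive) map $\ks$, so $\KS$ is a well-defined homomorphism.

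\textbf{Main obstacle.} The delicate point is the ``stacking'' identification in the homomorphism step: one must be careful that the smooth structure $\partial(X\times I\times I)_{F\cdot F'}$ genuinely agrees, up to a homeomorphism respecting the relevant collars and fixing $X \times \{0\}$ and $(\partial X)\times I$, with the one glued together from the two pieces for $F$ and $F'$ along $X\times I\times\{1/2\}$ — this requires the usual collaring/reparametrisation bookkeeping for semi-simplicial block structures (cf.\ the corrections in the smooth case cited from \cite{HLLRW}), and matching it precisely with the pinch comultiplication. A secondary subtlety is keeping track of which boundary faces carry which smooth structure in the one-dimension-up argument for well-definedness, and checking the pairs are homotopy equivalent as claimed; this is straightforward but needs to be stated carefully.
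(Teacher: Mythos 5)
Your overall strategy is the paper's (a one-dimension-up concordance built from $G$ for well-definedness; stacking plus the pinch/Eckmann--Hilton identification for additivity), but two steps as written do not hold up. For well-definedness: in the definition of $Q(X)$ the witness $G$ is a self-homeomorphism of $X \times I \times I$, not of $X \times I \times I \times I$; more seriously, the portion of $\partial W$ you smooth is misidentified. You include the two $6$-dimensional ends $X \times I \times I \times \{i\}$ in the smoothed region and say they carry $\partial(X\times I\times I)_{F_i}$ --- but those are smooth structures on the $5$-manifold $\partial(X\times I\times I)$, and smoothing the ends themselves is exactly what $\KS(F_i)$ obstructs, so it cannot be part of the construction; moreover your union of three faces is homeomorphic to $X\times I\times I$ (not $X\times I\times I\times I$) and does not deformation retract onto the union of both ends. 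The correct move, which is what the paper does and what your closing sentence gestures at, is to smooth only $\partial(X\times I\times I)\times I \subseteq \partial W$: the structure pulled back along $G$ on the face $X\times I\times\{1\}\times I$ and the standard structure on the remaining faces; this restricts to $\partial(X\times I\times I)_{F_i}$ at the two ends of the cylinder direction, and the resulting vector bundle lift over $\partial(X\times I\times I)\times I$ furnishes the homotopy between the two classes $\mathrm{sm}(\cdot_{F_0})$ and $\mathrm{sm}(\cdot_{F_1})$.

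For the homomorphism property, the load-bearing unproved claim is that the product in $Q(X)$ --- which is composition of homeomorphisms of $X\times I$ --- is represented in $Q(X)$ by the stacked pseudo-isotopy ``$F$ then $(f\times\Id_I)\circ F'$''. This is not ``after reparametrising'': the composite does not preserve the blocks $X\times[0,\tfrac12]$ and $X\times[\tfrac12,1]$, so an actual pseudo-isotopy (a homeomorphism $G$ of $X\times I\times I$) must be produced. The claim is true and elementary --- compress $F$ into $X\times[0,\tfrac12]$ (extending by $f\times\Id$ above) and $F'$ into $X\times[\tfrac12,1]$ (identity below) by Alexander-type isotopies rel $X\times\{0\}\cup\partial X\times I$ that fix the restriction to $X\times\{1\}$, and take the track of the composed isotopy as $G$ --- but this argument is missing, and it is an isotopy-level input. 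Granting it, your pinch argument does work (the separating slice should be $X\times\{1/2\}\times I$, not $X\times I\times\{1/2\}$, and the conjugation by $f\times\Id$ is indeed harmless since $(f\times\Id)^*\sigma=\sigma$). The paper avoids this identification altogether: it keeps the composite $F'\circ F$, observes that its smoothed-boundary problem is the stack of the $F$-problem with $(F\times\Id_I)^*$ of the $F'$-problem, and removes the pullback using \cref{lem:excrosseye}, i.e.\ only a homotopy-level statement about $F$ as a map of pairs, which is why its bookkeeping is lighter than what your route requires.
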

\begin{proof}
If $[F_0] = [F_1] \in Q(X)$ then by definition there is a homeomorphism $G$ of $X \times I \times I$ which restricts to $F_i$ on $X \times I \times \{i\}$, to the identity on $(X \times \{0\} \times I) \cup ((\partial X) \times I \times I)$, and to a diffeomorphism on $X \times \{1\} \times I$. We endow
\[
\partial(X \times I \times I) \times I\subseteq \partial(X \times I \times I \times I)
\]
with a smooth structure by giving $X \times I \times \{1\} \times I$ the smooth structure $(X \times I \times \{1\} \times I)_G$ given by pulling the standard one back along $G$, and giving the rest of $\partial(X \times I \times I) \times I$ the standard smooth structure. One checks that these two rules are compatible where they overlap. 
This smooth structure restricts to $\partial (X \times I \times I)_{F_i} \times \{i\}$, so the data
\begin{equation*}
\begin{tikzcd}
\partial(X \times I \times I) \times I \dar \rar& \BO(d+3) \ar[d] \rar & \BO \dar \rar & \mathrm{E}(\Top/\OO) \simeq * \dar\\
(X \times I \times I) \times I \rar & \BTOP(d+3) \rar & \BTop \rar{\theta} & \BB(\Top/\OO)
\end{tikzcd}
\end{equation*}
defines a homotopy from $\mathrm{sm}(X \times I \times I, \partial(X \times I \times I)_{F_0})$ to $\mathrm{sm}(X \times I \times I, \partial(X \times I \times I)_{F_1})$.

We show the map is a homomorphism. The group structure on the set $\big[\tfrac{X \times I \times I}{\partial (X \times I \times I)}, \BB(\Top/\OO)\big]$ is the group structure given by the infinite loop space $\BB(\Top/\OO)$.  The domain is homeomorphic to the suspension $\Sigma^2 (X/\partial X)$,
hence is a co-$H$-space, while the codomain is an $H$-space. It then follows from the Eckmann--Hilton argument \cite[p.~43--4]{Spanier}, that the group structure arising from the codomain is equivalent to the group structure arising the domain, i.e.\ from ``stacking'' in the second (say) interval direction of $X\times I\times I$.

Let $[F]$ and $[F']$ be two elements of $Q(X)$. We observe that the topological manifold with smooth boundary $(X \times I \times I, \partial(X \times I \times I)_{F' \,{\circ}\, F})$ can be obtained up to isomorphism by stacking the topological manifolds with smoothed boundaries
\[
(X \times I \times I, \partial(X \times I \times I)_F) \text{ and } (X \times I \times I, (F \times\id_I)^* \partial(X \times I \times I)_{F'})
\]
in the second interval direction. Here, $(F \times\id_I)^* \partial(X \times I \times I)_{F'})$ indicates the smooth structure obtained by pulling back the smooth structure $ \partial(X \times I \times I)_{F'}$, using the restriction of the homeomorphism $F \times\id_I$, to $\partial(X\times I\times I)$; see Figure~\ref{fig:stacking}.
\begin{figure}[h]
\begin{tikzpicture}[scale=0.8]
\draw[thick, fill=white] (0,0) rectangle (4,2);
\draw[thick, fill=white] (0,2) rectangle (4,4);
\node at (2,1) {$X \times I \times I$};
\node at (2,3) {$X \times I \times I$};
\node[below] at (2,0) {$\sigma$};
\node[right] at (4,1) {$\sigma$};
\node[left] at (0,1) {$\sigma$};
\node[above] at (2,2) {$F^*\sigma$};
\node[above] at (2,4) {$(F' \circ F)^* \sigma = F^*((F')^*\sigma)$};
\node[right] at (4,3) {$(F|_{X \times \{1\}} \times \Id_I)^* \sigma  = \sigma$};
\node[left] at (0,3) {$(F|_{X \times \{0\}} \times \Id_I)^* \sigma  = \sigma$};
\end{tikzpicture}
\caption{We denote the standard smooth structure on $X \times I$ by $\sigma$. Suppressing the $X$ direction, we indicate the stacking proposed, together with the smooth structures on the various copies of $X \times I$ that make up the boundaries of the two copies of $X \times I \times I$.}\label{fig:stacking}
\end{figure}

Using this, we see that
\begin{align*}
\Sm(F' \,{\circ}\, F) &= \mathrm{sm} (X \times I \times I, \partial(X \times I \times I)_{F' \,{\circ}\, F}) \\
&= \mathrm{sm}\big((X \times I \times I, \partial(X \times I \times I)_F) \cup (X \times I \times I, (F \times\id_I)^* \partial(X \times I \times I)_{F'}) \big)\\
&= \mathrm{sm}\big(X \times I \times I, \partial(X \times I \times I)_F\big) + \mathrm{sm}\big(X \times I \times I, (F \times\id_I)^* \partial(X \times I \times I)_{F'}\big)\\
 & \in \left[\tfrac{X \times I \times I}{\partial (X \times I \times I)}, \BB(\Top/\OO)\right].
\end{align*}
The additivity of $\mathrm{sm}$ with respect to $\cup$ is due to the Eckmann-Hilton argument explained above.
There is an identification of topological manifolds with smoothed boundaries
\[
F \times\id_I\colon (X \times I \times I, (F \times\id_I)^* \partial(X \times I \times I)_{F'}) \overset{\cong}\lra (X \times I \times I, \partial(X \times I \times I)_{F'}),
\]
and therefore
\begin{align*}
  & \,\mathrm{sm}\big(X \times I \times I, \partial(X \times I \times I)_F\big) + \mathrm{sm}\big(X \times I \times I, (F \times\id_I)^* \partial(X \times I \times I)_{F'}\big) \\ = & \,\mathrm{sm}\big(X \times I \times I, \partial(X \times I \times I)_F\big) + (F \times\id_I)^* \mathrm{sm}\big(X \times I \times I,  \partial(X \times I \times I)_{F'}\big) \\ = & \,\Sm(F) + (F \times\id_I)^*\Sm(F').
\end{align*}
The map of pairs
$$F\colon (X \times I, \partial(X \times I)) \lra (X \times I , \partial(X \times I))$$
is the identity on $X \times \{0\} \cup \partial(X) \times I$ and sends $X \times \{1\}$ into itself, so by Lemma \ref{lem:excrosseye} below it is homotopic as a map of pairs to the identity.
This implies that
\[ \Sm(F) + (F \times \Id_I)^*\Sm(F') =   \Sm(F) + \Sm(F').\]
We deduce that $\Sm(F' \,{\circ}\, F) = \Sm(F) + \Sm(F') \in \left[\tfrac{X \times I \times I}{\partial (X \times I \times I)}, \BB(\Top/\OO)\right]$,
as required.
\end{proof}

\begin{lemma}\label{lem:excrosseye}
Let $Y$ be a compact manifold, possibly with boundary, and let
$$f \colon (Y\times I, \partial (Y \times I)) \lra (Y\times I, \partial (Y \times I))$$
be a map of pairs that restricts to the identity on $Y \times \{0\} \cup \partial(Y) \times I$ and sends $Y \times \{1\}$ into itself. Then $f$ is homotopic as a map of pairs to the identity.
\end{lemma}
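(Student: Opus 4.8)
The plan is to deform $f$ to the identity in two explicit stages, each manifestly a homotopy through maps of pairs. Write $f=(f_1,f_2)$ with $f_1\colon Y\times I\to Y$ and $f_2\colon Y\times I\to I$, and record the hypotheses in coordinates: $f_1(y,0)=y$ and $f_2(y,0)=0$; $f_1(y,t)=y$ and $f_2(y,t)=t$ whenever $y\in\partial Y$; and $f_2(y,1)=1$ for all $y$ (this last equation encodes that $f$ sends $Y\times\{1\}$ into itself). Recall that $\partial(Y\times I)$ decomposes as $(Y\times\{0\})\cup(Y\times\{1\})\cup(\partial Y\times I)$. The idea is first to straighten the last coordinate so that $f$ commutes with the projection to $I$, and then to use the resulting continuous family of self-maps of $Y$ as a canonical ``sliding'' null-homotopy.

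For the first stage, set $H^{(1)}_s(y,t):=\big(f_1(y,t),\,(1-s)f_2(y,t)+st\big)$, the affine homotopy from $f_2$ to the projection $\mathrm{pr}_I$, with $f_1$ left unchanged. Then $H^{(1)}_0=f$ and $H^{(1)}_1(y,t)=(f_1(y,t),t)$. Each $H^{(1)}_s$ restricts to the identity on $Y\times\{0\}$ and on $\partial Y\times I$ (directly from the coordinate hypotheses), and on $Y\times\{1\}$ one computes $H^{(1)}_s(y,1)=(f_1(y,1),1)$, using $f_2(y,1)=1$; hence $H^{(1)}_s$ carries each of the three faces of $\partial(Y\times I)$ into itself and so is a map of pairs. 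After this stage we may therefore assume $f(y,t)=(g_t(y),t)$ for a continuous family $g\colon I\times Y\to Y$ with $g_0=\mathrm{id}_Y$ and $g_t|_{\partial Y}=\mathrm{id}_{\partial Y}$ for every $t$.

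For the second stage, set $H^{(2)}_s(y,t):=\big(g_{(1-s)t}(y),t\big)$. Then $H^{(2)}_0(y,t)=(g_t(y),t)=f(y,t)$ and $H^{(2)}_1(y,t)=(g_0(y),t)=(y,t)$. Again each $H^{(2)}_s$ is the identity on $Y\times\{0\}$ (since $g_0=\mathrm{id}$) and on $\partial Y\times I$ (since each $g_\tau$ fixes $\partial Y$), while on $Y\times\{1\}$ it equals $(g_{1-s}(y),1)\in Y\times\{1\}$; so $H^{(2)}_s$ is a map of pairs. Concatenating $H^{(1)}$ with $H^{(2)}$ produces the required homotopy of pairs from $f$ to the identity.

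The only point needing attention is precisely the verification that, at every parameter value, the homotopy stays within maps of pairs, i.e.\ that the three faces $Y\times\{0\}$, $\partial Y\times I$ and $Y\times\{1\}$ are each preserved; this is where the full strength of the hypotheses is used. The identity conditions on $Y\times\{0\}$ and $\partial Y\times I$ give $g_0=\mathrm{id}$ and $g_\tau|_{\partial Y}=\mathrm{id}$, while ``$f$ sends $Y\times\{1\}$ into itself'' is exactly what makes the $f_1$-unchanged homotopy $H^{(1)}$ stay within maps of pairs. No assumption that $f$ be a homeomorphism, or even a homotopy equivalence, is needed; and since $H^{(2)}$ does not fix $Y\times\{1\}$ pointwise, it is essential that ``homotopic as a map of pairs'' is not being demanded rel $\partial(Y\times I)$.
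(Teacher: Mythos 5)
Your proof is correct and is essentially the same as the paper's: the two-stage homotopy (first straightening the $I$-coordinate affinely, then sliding the $Y$-coordinate via $g_{(1-s)t}$) matches the paper's formulas $f_s(y,t)=(\gamma(y,t),(1-s)\tau(y,t)+st)$ and $f_{1,s}(y,t)=(\gamma(y,(1-s)t),t)$ exactly, with the same boundary-face verifications.
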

\begin{proof}
We exhibit a homotopy. Write $f(y,t) = (\gamma(y,t), \tau(y,t))$, where this defines the functions $\gamma$ and $\tau$. There is a preliminary homotopy given by the formula
$$f_s(y,t) = (\gamma(y,t), (1-s)\tau(y,t) + s t),$$
satisfying $f_0=f$ and $f_1(y,t) = (\gamma(y,t), t)$. This homotopy is fixed on $Y \times \{0\} \cup \partial(Y) \times I$, and if~$(y,1) \in Y \times \{1\}$ so that $f(y,1) = (\gamma(y,1), 1)$ then $f_s(y,1) = (\gamma(y,1), 1) = f(y,1) \in \partial (Y, \times I)$: thus it is a homotopy which is fixed on $\partial (Y \times I)$.

We make a second homotopy by the formula
$$f_{1,s}(y,t) = (\gamma(y, (1-s)t),t).$$
This satisfies $f_{1,0} = f_1$, and $f_{1,1}(y,t) = (\gamma(y,0), t) = (y,t)$, so $f_{1,1}$ is the identity. One verifies that it sends $\partial(Y \times I)$ into itself, so gives a homotopy of maps of pairs.
\end{proof}

\begin{proof}[Proof of Lemma \ref{lem:ksConcordanceInvariant}]
Postcompose the function from Lemma~\ref{lem:stronger} with
\begin{equation}\label{eqn:display-ks}
\ks\colon \left[\tfrac{X \times I \times I}{\partial (X \times I \times I)}, \BB(\Top/\OO)\right] \lra \left[\tfrac{X \times I \times I}{\partial (X \times I \times I)}, K(\Z/2,4)\right] \cong \widetilde{H}^4(X \times I \times I,\partial(X \times I \times I);\Z/2),\end{equation}
and  Poincar{\'e} duality. The composition $PD \circ \ks \circ \Sm \colon Q(X) \to H_2(X;\Z/2)$ is exactly the function~$\KS$ in the statement of  Lemma \ref{lem:ksConcordanceInvariant}. We need to see that it is a homomorphism. By Lemma~\ref{lem:stronger}, the function $\Sm$
is a homomorphism, and certainly Poincar\'{e} duality is.  The $H$-space structures on $\BB(\Top/\OO)$ and $K(\Z/2,4)$ need not agree a priori,  so it is not automatic that the map in~\eqref{eqn:display-ks} is a homomorphism.
 However, the map $\BB(\Top/\OO) \to K(\Z/2,4)$ factors through the Postnikov truncation:  $\BB(\Top/\OO) \to  \tau_{\leq 4} \BB(\Top/\OO) \xrightarrow{\simeq} K(\Z/2,4)$. The Postnikov truncation inherits an $H$ space structure, which follows from obstruction theory (the obstructions to extending the multiplication have coefficients in $\pi_i(\tau_{\leq k}X)$ for $i \geq k+1$, which vanishes). Hence the first map in our factorisation is an $H$-space map. Again by obstruction theory, Eilenberg--Maclane spaces have a unique $H$-space structure, so the second map is also an $H$-space map.
 (Since the domain has two~$I$ factors, we could also have applied the Eckmann--Hilton argument~\cite[p.~43--4]{Spanier}  again.)
 It follow that $\ks$ is also a homomorphism, which completes the proof.
\end{proof}

\subsection{Proof of Theorem \ref{thm:main-invariant}}

Recall from the introduction that given a closed, smooth, connected~$4$-manifold $X$ and a homeomorphism $F \colon X \times I \to X \times I$ restricting to diffeomorphisms $f_i\colon X\times\{i\}\to X\times\{i\}$ for $i=0,1$, we can endow the boundary of $X \times I \times I$ with a smooth structure $\partial(X \times I \times I)_F$. Using the discussion in the previous section we define
\[
\KS(F) := \ks(X \times I \times I, \partial(X \times I \times I)_F) \in H^4(X \times I \times I, \partial(X \times I \times I)_F; \Z/2) \cong H_2(X;\Z/2).
\]

\begin{theorem}\label{thm:obstruction-thy}
 The following are equivalent:
\begin{enumerate}[label=(\roman*)]
\item\label{item:1} $\KS(F)=0$;
\item\label{item:2} $X \times I \times I$ admits a smooth structure extending $\partial(X \times I \times I)_F$;
\item\label{item:4} $F$ is topologically isotopic, rel.~boundary, to a smooth pseudo-isotopy;
\item\label{item:4prime} $F$ is topologically pseudo-isotopic, rel.~boundary, to a smooth pseudo-isotopy.
\end{enumerate}
\end{theorem}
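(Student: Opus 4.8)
The plan is to prove $(i)\Leftrightarrow(ii)$, $(ii)\Rightarrow(iii)$, $(iii)\Rightarrow(iv)$, and $(iv)\Rightarrow(i)$, which together give all the equivalences. Write $W:=X\times I\times I$, abbreviate $\partial W_F:=\partial(X\times I\times I)_F$, and put $M:=X\times I$, viewing $W=M\times I$ as a cylinder on $M$ so that $\partial W_F$ carries the smoothing $F^*\sigma$ on the top $M\times\{1\}$ and the standard one elsewhere. The equivalence $(i)\Leftrightarrow(ii)$ is a direct consequence of the smoothing theory recalled above: since $\dim W=6$ lies in the range $5\le d\le 7$, the class $\mathrm{sm}(W,\partial W_F)\in[W/\partial W,\BB(\Top/\OO)]$ vanishes if and only if $\partial W_F$ extends over $W$, which is exactly $(ii)$; and since $W/\partial W$ is a $6$-dimensional complex while $\ks\colon\BB(\Top/\OO)\to K(\Z/2,4)$ is $8$-connected, the induced map $[W/\partial W,\BB(\Top/\OO)]\to[W/\partial W,K(\Z/2,4)]\cong H^4(W,\partial W;\Z/2)$ is a bijection, so $\mathrm{sm}(W,\partial W_F)$ vanishes precisely when its image $\KS(F)\in H_2(X;\Z/2)$ (under Poincar\'e--Lefschetz duality) does.

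For $(ii)\Rightarrow(iii)$: because $X$ is closed and $F$ restricts to the diffeomorphisms $f_0,f_1$ on $X\times\{0,1\}$, the smoothings $\sigma$ and $F^*\sigma$ of $M$ agree on $\partial M=X\times\{0,1\}$, and, unwinding the construction of $\partial W_F$, a smooth structure on $W=M\times I$ extending $\partial W_F$ is precisely the datum of a concordance rel.\ $\partial M$ from $\sigma$ to $F^*\sigma$. Assuming $(ii)$, the relative concordance-implies-isotopy theorem of Kirby--Siebenmann \cite{Kirby-Siebenmann:1977-1} (available since $\dim M=5\ge5$) then yields a topological isotopy $h_u$ of $M$, rel.\ $\partial M$, with $h_0=\Id$ and $(F\circ h_1)^*\sigma=h_1^*(F^*\sigma)=\sigma$, i.e.\ $F\circ h_1$ is a self-diffeomorphism of $(M,\sigma)$. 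The path $u\mapsto F\circ h_u$ is then a topological isotopy rel.\ $\partial(X\times I)$ from $F$ to $F\circ h_1$, and $h_u|_{\partial M}=\Id$ forces $F\circ h_1$ to restrict to $f_i$ on $X\times\{i\}$, so $F\circ h_1$ is a smooth pseudo-isotopy, proving $(iii)$. The implication $(iii)\Rightarrow(iv)$ is immediate, since the mapping cylinder of a topological isotopy rel.\ $\partial(X\times I)$ is a topological pseudo-isotopy rel.\ $\partial(X\times I)$.

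For $(iv)\Rightarrow(i)$: if $F$ is topologically pseudo-isotopic rel.\ boundary to a smooth pseudo-isotopy $F'$, then repeating the construction in the first part of the proof of \cref{lem:stronger} with the homeomorphism realising this pseudo-isotopy produces a smooth structure on $\partial W\times I$ restricting to $\partial W_F$ at one end and to $\partial W_{F'}$ at the other; via the classifying maps for smoothing theory this exhibits $\mathrm{sm}(W,\partial W_F)=\mathrm{sm}(W,\partial W_{F'})$. But $F'$ is a diffeomorphism, so $(F')^*\sigma=\sigma$, whence $\partial W_{F'}$ is the standard smooth structure and extends over $W$; thus $\mathrm{sm}(W,\partial W_{F'})=0$, and therefore $\KS(F)=\KS(F')=0$, which is $(i)$.

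I expect the main obstacle to be $(ii)\Rightarrow(iii)$: one must check carefully that a smooth structure on the $6$-manifold $X\times I\times I$ extending $\partial(X\times I\times I)_F$ really is the same datum as a relative concordance of smoothings of $X\times I$ --- which involves rounding the corners of $I\times I$ and choosing compatible collars near $X\times\{0,1\}$ --- that concordance-implies-isotopy is available in the relative setting and in dimension exactly $5$, and that the straightened isotopy of smoothings of $X\times I$ can be transported back to an isotopy of the homeomorphism $F$ itself, rel.\ $\partial(X\times I)$. The remaining implications are formal given the smoothing-theoretic machinery already set up in this section.
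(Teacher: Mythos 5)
Your proposal is correct and follows essentially the same route as the paper: (i)$\Leftrightarrow$(ii) by the smoothing-theory discussion in dimension $6$, (ii)$\Rightarrow$(iii) by reading the extended smooth structure as a concordance rel.\ boundary of smoothings of the $5$-manifold $X\times I$ and applying Kirby--Siebenmann concordance-implies-isotopy, (iii)$\Rightarrow$(iv) trivially, and (iv)$\Rightarrow$(i) via the boundary-concordance construction of \cref{lem:stronger}. The only cosmetic difference is in (iv)$\Rightarrow$(i): the paper first normalises $F$ by $(f_0\times\Id_I)^{-1}$ so as to land in $Q(X)$ and quote \cref{lem:ksConcordanceInvariant}, whereas you rerun the \cref{lem:stronger} construction directly, which requires the same (easily checked) compatibility of the glued smooth structures on the side faces of $\partial(X\times I\times I)\times I$.
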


\begin{proof}
That \ref{item:1}$\iff$\ref{item:2} is by the discussion in the previous section, using that $\dim(X \times I \times I)=6$.

We prove that \ref{item:2}$\implies$\ref{item:4}. If $X \times I \times I$ is endowed with a smooth structure extending~$\partial(X \times I \times I)_F$, then it is a concordance rel.\ boundary from the smooth structure $(X \times [0,1])_\sigma$ to $(X \times [0,1])_{F^*\sigma}$.
Since $X \times I$ has dimension five and the smooth structures already agree on the 4-dimensional boundary $X \times \{0,1\}$, we may apply Kirby--Siebenmann's concordance implies isotopy~\cite[Essay~I,~Theorem~4.1]{Kirby-Siebenmann:1977-1} for smooth structures, to obtain an isotopy of homeomorphisms $G_t \colon X \times [0,1] \to X \times [0,1]$, for $t \in [0,1]$, such that $G_0=\Id_{X \times [0,1]}$ and $G_1^* \sigma = F^* \sigma$.
Then~$F \,{\circ}\, G_t^{-1} \colon X \times [0,1] \to X \times [0,1]$ is an isotopy from $F$ to $F \,{\circ}\, G_1^{-1}$, and \[(F \,{\circ}\, G_1^{-1})^{*}\sigma = (G_1^{-1})^*F^*\sigma = (G_1^{-1})^*G_1^*\sigma = (G_1 \,{\circ}\, G_1^{-1})^* \sigma = \Id^* \sigma = \sigma.\]
Hence $F \,{\circ}\, G_1^{-1} \colon (X \times [0,1])_{\sigma} \to (X \times [0,1])_{\sigma}$ is a diffeomorphism, i.e.~a smooth pseudo-isotopy of~$X \times [0,1]$, that is topologically isotopic to $F$. Thus \ref{item:4} holds.

Certainly \ref{item:4}$\implies$\ref{item:4prime}. To see that \ref{item:4prime}$\implies$\ref{item:1},
 suppose that $F \colon X \times I \to X \times I$ is topologically pseudo-isotopic rel.\ boundary to a smooth pseudo-isotopy, via a pseudo-isotopy~$G \colon X \times I \times I \to X \times I \times I$. Let $f_i := F|_{X \times\{i\}} \colon X \to X$ be the diffeomorphisms obtained by restricting $F$, for~$i=0,1$.
Define $F' := F \circ (f_0 \times \Id_I)^{-1}$. This is a topological pseudo-isotopy from $\Id_X$ to the diffeomorphism~$f_1 \circ f_0 \colon X \to X$, and hence determines an element $[F'] \in Q(X)$.  Moreover~$G' := G \circ (f_0 \times \Id_I \times \Id_I)^{-1}$ is a pseudo-isotopy from $F'$ to a diffeomorphism of $X \times I$, i.e.\ to a smooth pseudo-isotopy. It follows that $[F']$ is equivalent in $Q(X)$ to a smooth pseudo-isotopy~$F'' := G'|_{X \times I \times \{1\}}$, and so $\KS(F') = \KS(F'')$ by Lemma \ref{lem:ksConcordanceInvariant}.
Since $F''$ is a diffeomorphism,~$\partial(X \times I \times I)_{F''} \cong \partial(X \times I \times I)$, where the latter denotes the standard smooth structure on the boundary. But $\ks(X \times I \times I, \partial(X \times I \times I)) = 0$, so $\KS(F) = \KS(F'') =0$.
\end{proof}

In particular, the implication \ref{item:1}$\implies$\ref{item:4} is Theorem \ref{thm:main-invariant}. The implication \ref{item:1}$\implies$\ref{item:4prime} is Theorem~\ref{thm:main-technical}~\ref{it:main-technicalA}, which seems slightly weaker than Theorem \ref{thm:main-invariant} but as we have just seen is in fact not.

\subsection{Proof of Corollary \ref{cor:main-B}}\label{sub:proof-cor-B}

For a compact smooth $d$-manifold $X$, let $\mathcal{P}^{\Diff}(X)$ denote the space of smooth pseudo-isotopies of $X$, i.e.\ the set of diffeomorphisms $F \colon X \times I \to X \times I$ such that $F$ acts as the identity near $(X \times \{0\}) \cup (\partial X \times I)$, endowed with the Whitney topology.
Similarly let~$\mathcal{P}^{\Top}(X)$ denote the space of topological pseudo-isotopies of $X$, with the compact-open topology.
For $d \geq 4$, Hatcher--Wagoner~\cite{HW} defined
a homomorphism
\[\Sigma^{\Diff} \colon \pi_0\mathcal{P}^{\Diff}(X) \lra \Wh_2(\pi_1(X)).\]
For $d \geq 5$, Burgelea--Lashof--Rothenberg and Pedersen \cite{BLR}*{Appendix A} indicated how to extend the definition of~$\Sigma^{\Diff}$ to the topological category. The details of the analogous construction were worked out for $d=4$ by Galvin--Nonino~\cite[Theorem~1.1]{Galvin:2025aa}, giving rise to a homomorphism~$\Sigma^{\Top} \colon \mathcal{P}^{\Top}(X) \to \Wh_2(\pi_1(X))$ such that $\Sigma^{\Diff}$ factors through the forgetful map as
\begin{equation}\label{eqn:defn-wh2-obstruction-top-PI}
\Sigma^{\Diff} \colon  \pi_0 \mathcal{P}^{\Diff}(X) \to \pi_0 \mathcal{P}^{\Top}(X) \xrightarrow{\Sigma^{\Top}} \Wh_2(\pi_1(X)).
\end{equation}
 Finally, recall a theorem of Gabai.

\begin{theorem}[{\cite[Theorem~2.5]{Gabai-22}}]\label{thm:gabai}
Let $f\colon X\xrightarrow{\cong} X$ be an orientation-preserving diffeomorphism of a smooth, compact, oriented $4$-manifold $X$. Then $f$ is smoothly stably isotopic to $\Id_X$ if and only if $f$ is smoothly pseudo-isotopic to $\Id_X$ via a smooth pseudo-isotopy $F$ with vanishing Hatcher--Wagoner obstruction $\Sigma^{\Diff}([F])=0\in \Wh_2(\pi_1(X))$.
\end{theorem}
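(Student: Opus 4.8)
The statement is quoted from \cite[Theorem~2.5]{Gabai-22}, so no argument is required here; for orientation we describe the shape of the proof one would give.

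The ``only if'' direction is the softer one. The plan is: from a smooth stable isotopy---say a smooth isotopy $(f_t)_{t\in I}$ from $f\#\Id$ to $\Id$ on $X_g := X\# g(S^2\times S^2)$, with $f_t=f\#\Id$ for $t$ near $0$---form the suspension $F_g(x,t):=(f_t(x),t)$. This is a smooth pseudo-isotopy of $X_g$ whose composite with the projection $X_g\times I\to I$ is simply $(x,t)\mapsto t$; running the Cerf-theoretic definition of the Hatcher--Wagoner invariants on this level-preserving family shows $\Sigma^{\Diff}([F_g])=0$. One then invokes stability of pseudo-isotopy to destabilise: a pseudo-isotopy of $X_g$ from $f\#\Id$ to $\Id$ descends, up to stabilisation, to a pseudo-isotopy of $X$ from $f$ to $\Id$, and since $\Sigma^{\Diff}$ is natural under stabilisation with $\Wh_2(\pi_1X)\xrightarrow{\cong}\Wh_2(\pi_1X_g)$, one obtains a smooth pseudo-isotopy $F$ of $X$ from $f$ to $\Id_X$ with $\Sigma^{\Diff}([F])=0$.

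For the ``if'' direction, which carries the real content, the plan is to run the Hatcher--Wagoner obstruction theory for pseudo-isotopies in the four-dimensional form employed by Gabai (the primary invariant $\Sigma^{\Diff}$ in dimension four being the one of Hatcher--Wagoner). Starting from $F$ with $\Sigma^{\Diff}([F])=0$, one first uses the vanishing of this primary invariant to put $F$ into a normal form in which the only obstruction to isotoping it, through pseudo-isotopies and rel ends, to the product pseudo-isotopy is a secondary invariant taking values in a suitable quotient of $\Wh_1\big(\pi_1X;\Z/2[\pi_1X]\times\pi_2X\big)$. Trivialising this secondary invariant calls for embedded Whitney spheres that need not be available on $X$; the crux is that, after replacing $X$ by $X_g$ for $g$ large, $\pi_2$ is enlarged enough to supply them, so the secondary invariant of $F\#\Id$ can be killed on $X_g$. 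Restricting the resulting isotopy of pseudo-isotopies to $X_g\times\{1\}$ then produces a smooth isotopy from $f\#\Id$ to $\Id$; that is, $f$ is smoothly stably isotopic to $\Id_X$.

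The main obstacle, and the reason this is a theorem rather than a formality, is exactly the cancellation of the secondary Hatcher--Wagoner invariant after stabilisation: that is where all the genuine four-dimensional difficulty is concentrated.
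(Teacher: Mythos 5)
The paper likewise gives no proof of this statement: it is imported verbatim as \cite[Theorem~2.5]{Gabai-22}, so your decision to cite rather than argue matches the paper's treatment exactly. Your orientation sketch of Gabai's argument (suspension and naturality of $\Sigma^{\Diff}$ for the easy direction; killing the secondary Hatcher--Wagoner obstruction after stabilisation by $S^2\times S^2$'s for the hard one) is a fair summary but is not something the paper attempts or needs.
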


We can now prove Corollary \ref{cor:main-B}.

\begin{proof}[Proof of Corollary \ref{cor:main-B}]
By Theorem \ref{thm:main-invariant} if $F$ has $\KS(F)=0$ then it is topologically isotopic rel.~$\partial (X \times [0,1])$ to a smooth pseudo-isotopy $F'$. If moreover $F$ is a topological isotopy then $[F] = [\id_X] \in \pi_0 \mathcal{P}^{\Top}(X)$ so $\Sigma^{\Top}([F])=0\in \operatorname{Wh}_2(\pi_1(X))$.  By \eqref{eqn:defn-wh2-obstruction-top-PI}, and since $[F'] \mapsto [F]$ under the forgetful map, we deduce that $\Sigma^{\Diff}([F']) = \Sigma^{\Top}([F]) =0$.
By Gabai's Theorem~\ref{thm:gabai}, this is equivalent to $f$ being smoothly stably isotopic to $\Id_X$.
\end{proof}

\section{Dualities and orientations}
We establish notation and recall some fundamental concepts from stable homotopy theory.
First, we introduce our notation for the standard spectra we shall invoke.
We write $\bS$ for the sphere spectrum, and $\bS^d:=\Sigma^d\bS$ for the sphere spectrum shifted by $d$. Of course, $\bS^0=\bS$, and note $\pi_n(\bS^d) \cong \pi_{n-d}^s(\pt_+) = \pi_{n-d}^s$.  For an abelian group $A$ and $k \in \Z$, we write $\mathrm{H}A[k]$ for the Eilenberg--Maclane spectrum with $\pi_k(\mathrm{H}A[k]) \cong A$ and $\pi_j(\mathrm{H}A[k])=0$ for $j \neq k$. Finally let $\MSTop$ be the oriented Thom spectrum.

Given spectra $X$ and $E$, we define the $E$-theory homology and cohomology, respectively, as
\[
E_r(X):=[\bS^r,X\wedge E]
\qquad\text{and}\qquad
E^r(X):=[X,\Sigma^r E].
\]
If $Y$ is a pointed space, we write $E_r(Y):=E_r(\Sigma^\infty Y)$ and $E^r(Y):=E^r(\Sigma^\infty Y)$ for the $E$-theory homology and cohomology.

\subsection{Spanier--Whitehead and Atiyah duality}\label{sec:Atiyah}
Spectra $A$ and $B$ are \emph{Spanier--Whitehead dual} if there exists a \emph{duality morphism} (see \cite[IV.2.3(a)]{Rudyak}), which is a \emph{coevaluation} map $\text{coev}\colon \bS\to A\wedge B$ satisfying certain properties. The existence of a  coevaluation map is equivalent to the existence of an \emph{evaluation map} $\text{ev}\colon B\wedge A\to\bS$ satisfying certain properties~\cite[IV.2.6(a)]{Rudyak}.
Moreover, if $A$ and~$B$ Spanier--Whitehead dual then for all spectra $C$ and $D$ coevaluation induces an isomorphism
\[
[D\wedge A, C]\xrightarrow{\cong} [D,C\wedge B];\qquad \varphi\mapsto (\varphi\wedge\Id_{B})\circ (\Id_{D}\wedge\,\text{coev})
\]
(see~\cite[IV.2.5(ii)]{Rudyak}).

Given spectra $A$ and $B$ with respective Spanier--Whitehead duals $A^\vee$
and $B^\vee$, the \emph{Spanier--Whitehead dual}
of a map $f\colon A\to B$ is the map $f^\vee \colon B^\vee \to A^\vee$, given by the composition
\[
f^\vee \colon B^\vee\xrightarrow{\simeq}\bS\wedge B^\vee\xrightarrow{\text{coev}\wedge \Id} A^\vee\wedge A\wedge B^\vee \xrightarrow{\Id\wedge f\wedge \Id} A^\vee\wedge B\wedge B^\vee \xrightarrow{\Id\wedge \text{ev}} A^\vee\wedge\bS\xrightarrow{\simeq} A^\vee.
\]
We will write $\SW(A) = A^\vee$ for the Spanier--Whitehead dual of $A$, when it exists, and $\SW(f) = f^\vee$ for the Spanier--Whitehead dual of a map between objects which have Spanier--Whitehead duals.

\begin{remark}\label{remark:useful-properties}
If $\text{coev} \colon \bS \to A \wedge B$ exhibits $A$ and $B$ as Spanier--Whitehead dual, then postcomposing it with the factor switching map $\mathrm{sw}\colon A\wedge B\to B\wedge A$ gives a coevaluation map exhibiting $B$ and~$A$ as Spanier--Whitehead dual. That is, being in duality is a symmetric property of $A$ and $B$.

We also note that $\bS$ is self-dual, and that if $A$ and $B$ are Spanier--Whitehead dual, then~$\Sigma^dA$ and $\Sigma^{-d}B$ are Spanier--Whitehead dual for all $d$.
\end{remark}

Assume $X$ is a $d$-dimensional topological manifold with possibly nonempty boundary.
 Recall that the \emph{stable normal microbundle} $\nu_X^{\Top}\colon X\to \BTop$ is the map classifying the virtual topological microbundle $[\nu^\Top_e - \varepsilon^N]$ of virtual dimension zero, where $\nu^\Top_e$ the normal microbundle of any locally flat embedding $e\colon (X, \partial X)\hookrightarrow (D^{d+N}, \partial D^{d+N})$ with $N \gg 0$, with $N$ large enough for normal microbundles to exist~\cite{Stern-normal-MBs-exist}.
There are Spanier--Whitehead dual pairs
\[
\SW(\Sigma^{\infty-d}(X/\partial X))=\Th(\nu^{\Top}_X)\qquad \text{and}\qquad \SW(\Sigma^{\infty-d}X_+)= \Th(\nu^{\Top}_X)/\Th(\nu^{\Top}_{\partial X});
\]
see \cite[V.2.3.(i), V.2.14.(a)]{Rudyak}. Recall that $X/\partial X = X_+$ if $\partial X$ is empty (this is consistent with the convention that $X/A$ refers to the push out of $\{*\} \leftarrow A \rightarrow X$) and in this case, the two Spanier--Whitehead dualities above agree.

We will describe the duality morphism of the latter. Let $\varepsilon\times\nu_{X}^{\Top}\colon X\times X\to \BTop$ denote the external direct sum of the stable trivial bundle over $X$ and $\nu_{X}^{\Top}$. By definition of the Whitney sum, there is a stable bundle map~$\nu_{X}^{\Top}\simeq\varepsilon\oplus\nu_{X}^{\Top}\to \varepsilon\times\nu_{X}^{\Top}$ covering $\text{diag}\colon X\to X\times X$. We may consider this bundle map to cover the diagonal map $(X,\partial X)\to X\times (X,\partial X)$. Passing to Thom spaces and noting that~$\Th(\varepsilon)=\Sigma^\infty X_+$, we obtain the \emph{Thom diagonal}
\[
\Delta\colon \Th(\nu_{X}^{\Top})/\Th(\nu^{\Top}_{\partial X})\lra \Sigma^\infty X_+\wedge \Th(\nu_{X}^{\Top})/\Th(\nu^{\Top}_{\partial X}).
\]
We then have a map
\[
\bS^d\xrightarrow{c_X}  \Th(\nu_{X}^{\Top})/\Th(\nu^{\Top}_{\partial X})\xrightarrow{\Delta}\Sigma^\infty X_+\wedge \Th(\nu_{X}^{\Top})/\Th(\nu^{\Top}_{\partial X}),
\]
where $c_X$ denotes the Thom collapse map. Desuspending this composition $d$ times gives the desired duality map~\cite[V.2.3.(i)]{Rudyak} (cf.~\cite[I.1.15]{Browder}).
The duality map for the other claimed Spanier--Whitehead duality is described similarly, but by considering the diagonal as the map of pairs $(X,\emptyset)\to (X,\partial X)\times X$.

So for each spectrum $E$ and each $r\in\Z$ there are group isomorphisms
\begin{multline*}
E_r(X/\partial X)=[\Sigma^r\bS,\Sigma^{\infty}(X/\partial X)\wedge E]=[\bS,\Sigma^{\infty-d}(X/\partial X)\wedge \Sigma^{d-r}E]\\
\cong [\Th(\nu^{\Top}_X), \Sigma^{d-r}E] = E^{d-r}(\Th(\nu^{\Top}_X)),
\end{multline*}
and
\begin{multline*}
E^r(X/\partial X)=[\Sigma^\infty(X/\partial X),\Sigma^rE]=[\Sigma^{\infty-d}(X/\partial X),\Sigma^{r-d}E]\\
\cong [\bS, \Th(\nu^{\Top}_X)\wedge\Sigma^{r-d}E] = E_{d-r}(\Th(\nu^{\Top}_X)).
\end{multline*}
These are referred to as \emph{Atiyah duality}~\cite{MR131880}. We will denote the map from the latter display, from cohomology to homology, by~$AD$.

We record the following, for later use.

\begin{lemma}\label{lem:SWcollapse}
If $X$ is a $d$-dimensional compact topological manifold then the Spanier--Whitehead dual of the Thom collapse map $\Sigma^{-d}c_X\colon \bS\to \Sigma^{-d}\big(\Th(\nu_{X}^{\Top})/\Th(\nu^{\Top}_{\partial X})\big)$ is the map $C_X\colon\Sigma^\infty X_+\to\bS$, induced by the constant map $X \to \pt$, adding basepoints and taking suspension spectra.
Moreover, under the $\Sigma$-$\Omega$ adjunction, the map $C_X$ corresponds to the constant map $X_+\to \Omega^\infty\bS=\mathrm{colim}_k\Omega^kS^k$ to the class of the identity map.
\end{lemma}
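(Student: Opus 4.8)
The plan is to exploit the explicit description of the Atiyah-duality \emph{coevaluation} recorded in Section~\ref{sec:Atiyah}, which is much more tractable than the evaluation map. Write $T:=\Th(\nu_X^{\Top})/\Th(\nu^{\Top}_{\partial X})$, so that $\SW(\Sigma^\infty X_+)\simeq\Sigma^{-d}T$ with coevaluation $\mathrm{coev}=\Sigma^{-d}(\Delta\circ c_X)\colon\bS\to\Sigma^\infty X_+\wedge\Sigma^{-d}T$, where $\Delta$ is the Thom diagonal and $c_X$ the Thom collapse. I will first compute $(C_X)^\vee$, and then deduce $(\Sigma^{-d}c_X)^\vee=C_X$ from the fact, standard in the symmetric monoidal category of spectra, that Spanier--Whitehead duality of maps is an involution under the canonical identification $\SW(\SW(A))\simeq A$.

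To compute $(C_X)^\vee$, I specialise the displayed formula for the dual of a map to a map $g\colon A\to\bS$ whose target is $\bS$: since $\bS$ is self-dual with evaluation the canonical equivalence $\bS\wedge\bS\simeq\bS$, that formula collapses, up to the evident swap of smash factors, to the composite $g^\vee\colon\bS\xrightarrow{\mathrm{coev}_A}A\wedge\SW(A)\xrightarrow{g\wedge\Id}\bS\wedge\SW(A)\simeq\SW(A)$. The single geometric input needed is a ``counit'' property of the Thom diagonal: the composite of $\Delta\colon T\to\Sigma^\infty X_+\wedge T$ with $C_X\wedge\Id$, collapsing the $\Sigma^\infty X_+$-factor to $\bS$, is the identity of $T$. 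This holds because at the level of (stable) bundle maps $\Delta$ is covered by $\nu_X\to\varepsilon^0\times\nu_X$ over $\mathrm{diag}\colon X\to X\times X$, sending $v\mapsto(q(v),v)$ with $q\colon\nu_X\to X$ the bundle projection, and collapsing the first factor discards the $\varepsilon^0\times$ coordinate, recovering $v\mapsto v$; equivalently it is the Thom-space avatar of $\pr_1\circ\mathrm{diag}=\Id_X$, and the relative form with $\Th(\nu^{\Top}_{\partial X})$ follows by passing to the quotient. Granting this, and noting that $C_X\wedge\Id$ commutes past the $d$-fold desuspension, one computes
\[
(C_X)^\vee\;=\;(C_X\wedge\Id)\circ\Sigma^{-d}(\Delta\circ c_X)\;=\;\Sigma^{-d}\!\big((C_X\wedge\Id)\circ\Delta\circ c_X\big)\;=\;\Sigma^{-d}c_X,
\]
using the counit property in the last equality. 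Since $(-)^\vee$ is involutive, $(\Sigma^{-d}c_X)^\vee=\big((C_X)^\vee\big)^\vee=C_X$, which is the first assertion.

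For the ``moreover'' clause it remains to identify $C_X$ under the $\Sigma$--$\Omega$ adjunction $[\Sigma^\infty X_+,\bS]\cong[X_+,\Omega^\infty\bS]$. By definition $C_X=\Sigma^\infty h$, where $h\colon X_+\to S^0$ is the based map collapsing all of $X$ to the non-basepoint, so its adjoint is $X_+\xrightarrow{h}S^0\xrightarrow{\eta}\Omega^\infty\Sigma^\infty S^0=\Omega^\infty\bS$, with $\eta$ the unit of the adjunction. Since $\eta\colon S^0\to\Omega^\infty\bS=\mathrm{colim}_k\Omega^kS^k$ sends the non-basepoint to the path component of the identity map $S^k\to S^k$, and $h$ sends all of $X$ to the non-basepoint, the adjoint is precisely the constant map $X_+\to\Omega^\infty\bS$ at the class of the identity.

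The only step with genuine geometric content is the ``counit'' identity for the Thom diagonal, and it is elementary. Everything else is bookkeeping in the symmetric monoidal category of spectra: correctly specialising the general dual-of-a-map formula when the target (resp.\ source) is $\bS$ — in particular tracking the smash-factor swaps and the placement of the $d$-fold desuspension — and confirming that the coevaluation recorded in Section~\ref{sec:Atiyah} is literally the one entering the isomorphism $\varphi\mapsto(\varphi\wedge\Id)\circ(\Id\wedge\,\mathrm{coev})$ used to define Spanier--Whitehead duals. I therefore expect no real obstacle, only care with these coherences.
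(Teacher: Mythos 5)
Your proposal is correct and follows essentially the same route as the paper: the paper likewise computes $\SW(C_X)$ by specialising the dual-of-a-map formula with coevaluation $\Sigma^{-d}(\Delta'\circ c_X)$, uses the counit identity $(\Id\wedge C_X)\circ\Delta'=\Id$ for the Thom diagonal to identify the result with $\Sigma^{-d}c_X$, and then concludes $\SW(\Sigma^{-d}c_X)=C_X$, with the ``moreover'' clause handled by the same unit-of-adjunction argument. Your bundle-level justification of the counit property and the explicit appeal to involutivity of duality simply spell out steps the paper leaves implicit.
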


\begin{proof}
Write $\Delta':=\mathrm{sw}\circ\Delta$ for the Thom diagonal followed by the factor switch map; this will determine the coevaluation map needed for this proof (see~\cref{remark:useful-properties}). By definition of the duality, the
Spanier--Whitehead dual of the map $C_X \colon\Sigma^\infty X_+\to\bS$ is the composition
\begin{multline*}
\bS\xrightarrow{\simeq}\bS\wedge\bS\xrightarrow{\Sigma^{-d}c_X\wedge\Id}\Sigma^{-d}\big(\Th(\nu_{X}^{\Top})/\Th(\nu^{\Top}_{\partial X})\big)\wedge\bS\xrightarrow{\Delta'\wedge\Id} \Sigma^{-d}\big(\Th(\nu_{X}^{\Top})/\Th(\nu^{\Top}_{\partial X})\big)\wedge\Sigma^\infty X_+\wedge\bS
\\
\xrightarrow{\Id\wedge C_X\wedge\Id} \Sigma^{-d}\big(\Th(\nu_{X}^{\Top})/\Th(\nu^{\Top}_{\partial X})\big)\wedge\bS\wedge\bS\xrightarrow{\simeq} \Sigma^{-d}\big(\Th(\nu_{X}^{\Top})/\Th(\nu^{\Top}_{\partial X})\big).
\end{multline*}
Applying the $d$-fold suspension one obtains
\[
\bS^d \overset{c_X}\lra \Th(\nu_{X}^{\Top})/\Th(\nu^{\Top}_{\partial X})\xrightarrow{\Delta'} \Sigma^{\infty} X_+\wedge \Th(\nu_{X}^{\Top})/\Th(\nu^{\Top}_{\partial X}) \xrightarrow{\mathrm{Id}\wedge C_X} \Th(\nu_{X}^{\Top})/\Th(\nu^{\Top}_{\partial X})\wedge\bS.
\]
But by definition of the Thom diagonal the composition $(\mathrm{Id} \wedge C_X) \circ \Delta'$
 is the identity. Hence $\SW(\Sigma^{-d}c_X)=C_X$, as claimed.

The final assertion follows from adding basepoints to the map $X\to\pt$ then smashing with~$S^k$ to get a map~$S^k\wedge X_+\to S^k\wedge S^0$, which is the identity on the first factor. Taking adjoints results in a constant map to the identity element $X_+\to \Omega^k(S^k\wedge S^0)=\Omega^k(S^k)$. The result follows by taking the colimit.
\end{proof}

\subsection{Orientations and fundamental classes}\label{subsec:orientation}

If $E$ is a ring spectrum, and $X$ is a $d$-dimensional compact topological manifold, then an \emph{$E$-orientation} of, or \emph{$E$-theory fundamental class} for, $X$ is a class $[X]_E \in E_d(X/\partial X)$ such that for any coordinate chart $\mathbb{R}^d \subseteq X$ with corresponding collapse map
\[c\colon X/\partial X \to X/(X - \mathrm{Int}(D^d)) \cong D^d/\partial D^d \cong S^d\]
the class $c_*([X]_E) \in E_d(S^d)$ corresponds under the suspension isomorphism to $\pm 1 \in E_0(S^0)$~\cite[V.2.1]{Rudyak}.
Here $\pm 1 \in E_0(S^0)$ is the image of  $\pm 1 \in \pi_0^s(S^0)\cong \Z$ under the map $\pi_0^s(S^0) \to E_0(S^0)$ induced by the unit map $\eta \colon \bS \to E$ of the ring spectrum.
The notion of $E$-orientability of a manifold is closely connected to the definition of $E$-orientability of the stable normal microbundle $\nu^{\Top}_X\colon X\to \BTop$ (\cite[V.1.12]{Rudyak}). Indeed, under Atiyah duality, $E$-theory fundamental classes~$[X]_E \in E_d(X/\partial X)$ correspond to
$E$-theory Thom classes $U_E\in E^0(\Th(\nu^{\Top}_X))$ for $\nu^{\Top}_X$ (also known as $E$-orientations for $\nu^{\Top}_X$)~\cite[Corollary V.2.6]{Rudyak}. Given a module spectrum $F$ over $E$, and an $E$-theory fundamental class $[X]_E$ on the manifold $X$, there are induced $F$-theory Poincar\'{e} duality isomorphisms
\[
[X]_E\frown-\colon F^r(X/\partial X) \overset{\cong}\lra F_{d-r}(X_+),
\]
which coincide with Atiyah duality for $F$
 followed by the homological Thom isomorphism; see~\cite[V.1.3]{Rudyak}) and the second diagram of~\cite[Theorem~14.41]{MR1886843}. Note that in this normalisation the Thom class, and hence the Thom isomorphism, has degree zero.

\section{Proof of Theorem \ref{thm:main-guarantee}}\label{sec:proof-thm-C}

We will prove Theorem \ref{thm:main-guarantee} in the form of Theorem \ref{thm:main-technical} \ref{it:main-technicalC}, so will show that the image of the composition
\[
\pi_1(\pseudoHomeo^+_\partial(X)) \overset{\alpha}\lra Q(X) \overset{\KS}\lra H_2(X;\Z/2)
\]
contains the kernel $\ker(H_2(X;\Z/2)\xrightarrow{\delta_*}H_1(X;\Z_{(2)}))$ of the Bockstein operator for the coefficient sequence $0\to\Z_{(2)}\overset{2}\to\Z_{(2)}\to\Z/2\to 0$.
It suffices to do so after precomposing with the connecting map
\[
\partial\colon \pi_2\bigg(\frac{\sAut^+_\partial(X)}{\pseudoHomeo^+_\partial(X)}\bigg) \lra \pi_1\big(\pseudoHomeo^+_\partial(X)\big)
\]
in the long exact sequence of semi-simplicial homotopy groups (see e.g.~\cite[\textsection1.7]{MR2840650} for a definition) associated to the fibre sequence $\pseudoHomeo^+_\partial(X) \to \sAut^+_\partial(X)\to \frac{\sAut^+_\partial(X)}{\pseudoHomeo^+_\partial(X)}$ of Kan semi-simplicial sets. The domain of this connecting map has the advantage that it may be described via surgery theory, as we now explain.

\subsection{Geometric surgery}\label{sec:GeomSurgery}
 First we briefly recall the \emph{simple structure set}, and describe its relation to domain of $\partial$, above. Then we recall the surgery exact sequence of Browder--Novikov--Sullivan--Wall
 and show how it can be related to the maps $\alpha$, $\mathrm{ks}$, and $\Sm$.

\subsubsection{The structure set}
In the following we let $\CAT \in \{\Top, \Diff, \PL\}$ be a category of manifolds. Let $Y$ be a $d$-dimensional compact $\CAT$ manifold with (possibly empty) boundary~$\partial Y$.
The basic object of study in surgery theory is the simple $\CAT$-structure set $\mathcal{S}^{\CAT}_\partial(Y)$, described as follows.

\begin{definition}
Elements of $\mathcal{S}^{\CAT}_\partial(Y)$ are equivalence classes of maps of pairs $(f, \partial f)\colon (M, \partial M) \to (Y, \partial Y)$ from a $\CAT$ manifold such that $\partial f$ is a $\CAT$ isomorphism and $f$ is a
simple homotopy equivalence. The equivalence relation on such maps is $s$-cobordism: if there is an $s$-cobordism $W$ from $(M_0, \partial M_0)$ to $(M_1, \partial M_1)$, trivial on the boundary, and a map $(F, \partial_- F)\colon (W, \partial_- W) \to (Y, \partial Y)$ restricting to $(f_0, \partial f_0)\colon (M_0, \partial M_0) \to (Y, \partial Y)$ and $(f_1, \partial f_1)\colon (M_1, \partial M_1) \to (Y, \partial Y)$ at the ends, then $(f_0, \partial f_0) \sim (f_1, \partial f_1)$.
\end{definition}

Alternatively, if $d \geq 5$ (or $d=4$, $\CAT =\Top$, and $\pi_1(Y)$ is good) then the $\CAT$ $s$-cobordism theorem applies and we can rephrase the equivalence relation as $(f_0, \partial f_0) \sim (f_1, \partial f_1)$ if and only if there is a $\CAT$ isomorphism $(\phi, \partial \phi)\colon (M_0, \partial M_0) \to (M_1, \partial M_1)$ with $(f_1, \partial f_1) \,{\circ}\, (\phi, \partial \phi) \simeq (f_0, \partial f_0)$.

For each $k\geq 0$, we now describe a map
\[
\pi_k\bigg(\frac{\sAut^+_\partial(Y)}{\pseudoHomeo^+_\partial(Y)}\bigg) \lra \mathcal{S}^{\Top}_\partial(Y \times D^k),
\]
and show it is an injection for $k=0$ and a bijection for $k > 0$, as long as $k+d \geq 5$. The domain is a (semi)-simplicial homotopy group of a quotient of a semi-simplicial set by a semi-simplicial group. Unravelling definitions shows that elements are represented by simple homotopy equivalences
\[
f\colon Y \times \Delta^k \lra Y \times \Delta^k
\]
which preserve faces, which restrict to the identity on $(\partial Y) \times \Delta^k$, and which restrict to homeomorphisms on $Y \times \partial \Delta^k$ (and are well-defined up to a homotopy which is an isotopy on $Y \times \partial \Delta^k$ and fixed on $(\partial Y) \times \Delta^k$, and precomposing by block homeomorphisms of $Y \times \Delta^k$ which are the identity on $(\partial Y) \times \Delta^k$). Such maps define elements of $\mathcal{S}^{\Top}_\partial(Y \times D^k)$, and this assignment can be shown to be well-defined.  To see that it is surjective for $k>0$, we observe that an element of~$\mathcal{S}^{\Top}_\partial(Y \times D^k)$ is represented by a simple homotopy equivalence $\phi\colon (M, \partial M) \to (Y \times D^k, \partial(Y \times D^k))$ which is a homeomorphism on the boundary. Identifying $D^k = [0,1]^k$ we may view $M$ as an $h$-cobordism from $Y \times [0,1]^{k-1} \times \{0\}$ to $Y \times [0,1]^{k-1} \times \{1\}$ rel.~boundary, but as $M$ is simple homotopy equivalent to $Y \times [0,1]^k$ this is in fact an $s$-cobordism. By the $s$-cobordism theorem there is a homeomorphism $M \cong Y \times [0,1]^k$, and so the map $\phi$ may be considered as a simple homotopy equivalence~$\phi\colon (Y \times D^k, \partial(Y \times D^k)) \to (Y \times D^k, \partial(Y \times D^k))$ which is a homeomorphism on the boundary, but this is precisely our description of elements coming from $\pi_k(\sAut^+_\partial(Y)/\pseudoHomeo^+_\partial(Y))$. As usual, injectivity is proved by a relative form of this surjectivity argument.

\subsubsection{The surgery exact sequence}
The surgery theory of Browder--Novikov--Sullivan--Wall~\cite{Wall-surgery-book,CLM} describes the sets $\mathcal{S}^{\Top}_\partial(Y \times D^k)$ for $d+k \geq 5$ via the surgery exact sequence
\[
\begin{tikzcd}[row sep = tiny]
\cdots\ar[r] & L_{d+3}(\Z[\pi]) \ar[r] & \mathcal{S}^{\Top}_\partial(Y \times I \times I) \ar[draw=none]{d}[name=Y, anchor=center]{} \ar[r] & \left[\tfrac{Y \times I \times I}{\partial (Y \times I \times I)}, \G/\Top\right] \ar[r] & \phantom{X}
                      \\
\phantom{X} \ar[r] & L_{d+2}(\Z[\pi]) \ar[r] & \mathcal{S}^{\Top}_\partial(Y \times I) \ar[draw=none]{d}[name=X, anchor=center]{} \ar[r] & \left[\tfrac{Y \times I}{\partial (Y \times I)}, \G/\Top\right] \ar[r] & \phantom{X}
\\
\phantom{X} \ar[r]
& L_{d+1}(\Z[\pi]) \ar[r]
& \mathcal{S}^{\Top}_\partial(Y) \ar[r, "{\eta_{\Top}}"]
& \left[Y/\partial Y, \G/\Top\right] \ar[r, "{\sigma}"]
&L_{d}(\Z[\pi]),
\end{tikzcd}
\]
where $\pi := \pi_1(Y)$. Exactness is just as for the long exact sequence of homotopy groups for a fibration of pointed spaces: the last three terms are pointed sets, the next three are groups, and the rest are abelian groups, with the maps being homomorphisms to the extent possible (most important for us will be that the top row consists of abelian groups).
The $L$-groups $L_k(\Z[\pi])$ are Wall's (simple) surgery obstruction groups. The topological monoid $\G$ is the structure group for stable spherical fibrations, and the space $\G/\Top$ is defined as the homotopy fibre of the map~$\BTop\to \BG$ induced by the forgetful map.

 The stable normal microbundle $\nu_Y^{\Top}\colon Y\to \BTop$ is a preferred lift of the \emph{Spivak normal fibration}~$\nu_Y^{\G}\colon Y\to \BG$.
 The group $\left[Y/\partial Y, \G/\Top\right]$ acts freely transitively on the
 set of lifts of the Spivak normal fibration, relative to a fixed lift on the boundary $\partial Y$.

Given an element $[(f,\partial f)\colon (M,\partial M)\to (Y,\partial Y)]$ of the structure set $\mathcal{S}^{\Top}_\partial(Y)$, the homotopy equivalence $f$ determines a stable microbundle $(f^{-1})^*\nu^{\Top}_M\colon Y\to \BTop$. As $f$ is a homotopy equivalence, the underlying spherical fibration of $(f^{-1})^*\nu^{\Top}_M$ is canonically homotopic to the Spivak normal fibration $Y\to \BG$. As $f$ is a homotopy equivalence, the underlying spherical fibrations of~$(f^{-1})^*\nu^{\Top}_M$ and $\nu_Y^\Top$ are (canonically) identified. As $\partial f$ is a homeomorphism this identification extends to one of stable microbundles over $\partial Y$. This gives a \emph{difference class}
$$\eta_{\Top}([(f,\partial f)]):=d((f^{-1})^*\nu^{\Top}_M,\nu^{\Top}_Y)\in \left[Y/\partial Y, \G/\Top\right].$$
See~\cite{Wall-surgery-book,CLM} for further details on the surgery exact sequence.

\subsubsection{Mapping the surgery exact sequence in}
For a 4-manifold $X$ we may form the diagram
\begin{equation}\label{eqn:diagram-mapping-surgery-seq-in}
\begin{tikzcd}[column sep=1em]
\pi_2\Big(\frac{\sAut^+_\partial(X)}{\pseudoHomeo^+_\partial(X)}\Big) \arrow[r, "\cong"] \dar{\partial} & \mathcal{S}^{\Top}_\partial(X \times I \times I) \rar{\eta_\Top} & {\left[\tfrac{X \times I \times I}{\partial (X \times I \times I)}, \G/\Top\right]} \dar
\\
\pi_1(\pseudoHomeo^+_\partial(X)) \rar{\alpha} & Q(X) \rar{\Sm} & {\left[\tfrac{X \times I \times I}{\partial (X \times I \times I)}, \BB(\Top/\OO)\right]} \arrow[r, "\ks", "\cong"'] & \widetilde{H}^4\Big(\tfrac{X \times I \times I}{\partial (X \times I \times I)};\Z/2\Big).
\end{tikzcd}
\end{equation}
The right-hand vertical map is induced by the composition $\G/\Top \to \BTop \to \BB(\Top/\OO)$. The left-hand vertical map is the connecting homomorphism from the long exact sequence of the fibration.

\begin{lemma}\label{lem:normalcommute}
The diagram \eqref{eqn:diagram-mapping-surgery-seq-in} commutes.
\end{lemma}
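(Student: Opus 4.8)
\textbf{Proof plan for Lemma \ref{lem:normalcommute}.}

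The plan is to chase an element $x \in \pi_2\big(\sAut^+_\partial(X)/\pseudoHomeo^+_\partial(X)\big)$ through both composites of the diagram and check that the resulting classes in $\widetilde H^4\big(\tfrac{X \times I \times I}{\partial(X\times I\times I)};\Z/2\big)$ agree. Such an $x$ is represented by a simple homotopy equivalence $\phi\colon X \times \Delta^2 \to X\times\Delta^2$, face-preserving, equal to the identity on $(\partial X)\times\Delta^2$, and restricting to a \emph{homeomorphism} on $X\times\partial\Delta^2$. Going left then down: applying $\partial$ produces the element of $\pi_1(\pseudoHomeo^+_\partial(X))$ obtained by restricting to the block boundary, and $\alpha$ reinterprets this as a class $[F]\in Q(X)$; tracing through, $F$ is (up to the equivalence relations involved) the restriction $\phi|$ to one of the faces, viewed as a topological pseudo-isotopy from $\Id_X$ to a diffeomorphism. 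Then $\Sm([F]) = \mathrm{sm}(X\times I\times I, \partial(X\times I\times I)_F)$, the smoothing class of the topological $6$-manifold $X\times I\times I$ with the boundary smoothed by pulling back along $F$. Going up then right: $\phi$ itself is, by the $s$-cobordism identification used to build the top horizontal map, an $s$-cobordism from $X\times I\times I$ to itself; its normal invariant $\eta_\Top(\phi) \in [\tfrac{X\times I\times I}{\partial(X\times I\times I)},\G/\Top]$ is the difference class $d((\phi^{-1})^*\nu^{\Top},\nu^{\Top})$, which is then pushed into $[\tfrac{X\times I\times I}{\partial(X\times I\times I)},\BB(\Top/\OO)]$ via $\G/\Top\to\BTop\to\BB(\Top/\OO)$.

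The heart of the argument is to identify these two classes in $[\tfrac{X\times I\times I}{\partial(X\times I\times I)},\BB(\Top/\OO)]$, i.e.\ to show that the smoothing obstruction of $(X\times I\times I, \partial(X\times I\times I)_F)$ equals the image of the surgery-theoretic normal invariant of $\phi$. The key point is that both are computed from the same piece of geometric data: the self-homeomorphism $\phi$ (equivalently $F$, its boundary restriction) together with the comparison of the two smooth, respectively $\Top$, structures it induces. Concretely, $\partial(X\times I\times I)_F$ is built by pulling back the standard smooth structure along $F$; on the level of tangential data this changes the lift of $\tau_{X\times I\times I}$ along $\BO\to\BTop$ exactly by the difference of microbundle structures detected by $\phi$. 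Since $\G/\Top$ sits over $\BTop$ and $\BB(\Top/\OO)$ is the total homotopy fibre of the relevant square from the smoothing-theory diagram in Section~\ref{sec:smoothing}, the map $\G/\Top\to\BTop\to\BB(\Top/\OO)$ precisely records the smoothing obstruction associated to a change of $\Top$-reduction. Thus I would set up a commuting square of classifying spaces relating (i) the normal-invariant construction $\eta_\Top$ (a difference of two reductions of the Spivak fibration along $\BTop\to\BG$) and (ii) the smoothing construction $\mathrm{sm}$ (a difference of two reductions along $\BO\to\BTop$), using that $F$ is a homeomorphism — so the $\G$-level Spivak data is canonically unchanged — and that $F^*\sigma$ differs from $\sigma$ only in its $\OO$-reduction. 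Naturality of Atiyah duality and of the Thom-class/structure-set dictionary then forces the two images to coincide.

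The main obstacle I expect is bookkeeping of basepoints, collars, and the identifications implicit in the top horizontal isomorphism of \eqref{eqn:diagram-mapping-surgery-seq-in}: one must verify that the $s$-cobordism $W\cong X\times I\times I$ produced from a class in $\mathcal{S}^{\Top}_\partial(X\times I\times I)$, when restricted via $\partial$, genuinely yields the pseudo-isotopy $F$ underlying $\alpha(\partial x)$, and not $F$ composed with some correction diffeomorphism (which, by Lemma~\ref{lem:excrosseye} and the fact that $\mathrm{sm}$ is a homomorphism via Lemma~\ref{lem:stronger}, would not change the answer, but this needs to be said). A secondary subtlety is checking that the forgetful composite $\G/\Top\to\BTop\to\BB(\Top/\OO)$ really does send a difference class $d((\phi^{-1})^*\nu^{\Top},\nu^{\Top})$ to the smoothing class $\mathrm{sm}$ of the corresponding manifold-with-smoothed-boundary; this is essentially the compatibility of the surgery-theoretic and smoothing-theoretic incarnations of $\Top/\OO$, and I would isolate it as a sublemma, proved by comparing the two homotopy-pullback presentations of $\BB(\Top/\OO)$ appearing in Section~\ref{sec:smoothing}.
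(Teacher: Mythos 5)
Your overall strategy matches the paper's: chase a class through both composites, use the $s$-cobordism theorem to convert a representative $\phi\colon (M,\partial M)\to(X\times I\times I,\partial(X\times I\times I))$ of the structure set into a homeomorphism $\psi$ whose restriction to the top face is the pseudo-isotopy $F$, and absorb the discrepancy between $\psi^{-1}$ and $\phi^{-1}$ via Lemma~\ref{lem:excrosseye}. You also correctly isolate the remaining content as a sublemma: that the image of $\eta_\Top(\phi)$ under $\G/\Top\to\BTop\to\BB(\Top/\OO)$ agrees with $\mathrm{sm}(X\times I\times I,\partial(X\times I\times I)_F)$.

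It is in that sublemma that your sketch is under-specified, and the mechanism you propose is not the right one. The normal invariant is a difference of reductions of the \emph{normal} data, $(\phi^{-1})^*\nu^{\Top}_M-\nu^{\Top}_{X\times I\times I}$, whereas $\mathrm{sm}$ is defined via the \emph{tangent} microbundle; bridging the two requires the additivity of the Kirby--Siebenmann class under Whitney sum (so that a difference class has $\ks$ equal to the difference of $\ks$'s, and $\ks(\nu^{\Top}_Y)=\ks(\tau^{\Top}_Y)$), together with the vanishing of $\ks(X\times I\times I,\partial)$ because $X\times I\times I$ is smooth. What survives is exactly $(\phi^{-1})^*\ks(M,\partial M)$, which is then matched against the other composite; the comparison is carried out after composing with the $8$-connected map $\ks\colon\BB(\Top/\OO)\to K(\Z/2,4)$, which loses nothing since the domain is $6$-dimensional. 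By contrast, Atiyah duality plays no role in this lemma, and $\BB(\Top/\OO)$ is the base of the fibration $\BO\to\BTop\to\BB(\Top/\OO)$ rather than a total homotopy fibre of the smoothing-theory square; invoking these suggests the actual identification had not yet been pinned down. With the additivity argument substituted for that paragraph, your plan becomes the paper's proof.
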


\begin{proof}
Let $\phi\colon (M, \partial M) \to (X \times I \times I, \partial (X \times I \times I))$ represent an element of $\mathcal{S}^{\Top}_\partial(X \times I \times I)$, so it is a simple homotopy equivalence which is a homeomorphism on the boundary. In particular,~$M$ can be considered as an $h$-cobordism from $X \times I \times \{0\}$ to $X \times I \times \{1\}$ rel.~boundary. By the composition formula its Whitehead torsion satisfies
$$f_*\tau(M, X \times I \times \{0\}) + \tau(\phi) = \tau(X \times I \times I, X \times I \times \{0\}) = 0 \in \mathrm{Wh}(\pi_1(X \times I \times I)),$$
and as $\phi$ is a simple homotopy equivalence it follows that $\tau(M, X \times I \times \{0\})=0$, and thus $M$ is an $s$-cobordism. The $s$-cobordism theorem then gives a homeomorphism
$$\psi\colon M\overset{\cong}\lra X \times I \times I \quad \text{ rel.~} \quad X \times I \times \{0\} \cup \partial(X \times I) \times I,$$ which restricts to a homeomorphism $F\colon X \times I \times \{1\} \overset{\cong}\to X \times I \times \{1\}$ rel.~boundary. Spelling out the isomorphism and definition of $\partial$ in the diagram shows that
\[
\partial([\phi]) = [F]\in \pi_0(\pseudoHomeo^+_\partial(X\times I))=\pi_1(\pseudoHomeo^+_\partial(X)).
\]
Now $\alpha([F]) = [\id, F]$, which is just $F$ considered as a topological pseudo-isotopy from the identity to the identity diffeomorphism, so $\ks \,{\circ}\, \Sm \,{\circ}\, \alpha([F]) \in \left[\tfrac{X \times I \times I}{\partial (X \times I \times I)}, K(\Z/2,4)\right]$
 is given by
\begin{equation*}
\begin{tikzcd}
\partial (X \times I \times I)_F  \ar[rrr, "T\partial (X \times I \times I)_F \oplus \varepsilon^1"] \ar[d] && & \BO(6) \ar[d] \ar[r] & \BO \dar \rar & \mathrm{E}(\Top/\OO) \dar \rar & P K(\Z/2, 4) \simeq * \dar\\
X \times I \times I  \ar[rrr, "\tau_{X \times I \times I}"]  &&& \BTop(6) \rar & \BTop \rar{\theta} & \BB(\Top/\OO) \rar{\ks} & K(\Z/2,4).
\end{tikzcd}
\end{equation*}
Now $\psi$ induces an isomorphism of topological manifolds with smoothed boundary
$$\psi\colon (M, \partial M) \overset{\cong}\lra (X \times I \times I, \partial(X \times I \times I)_F)$$
so we have
$$\ks \,{\circ}\, \Sm \,{\circ}\, \alpha([F]) = \ks(X \times I \times I, \partial(X \times I \times I)_F) = (\psi^{-1})^* \ks(M, \partial M) \in \widetilde{H}^4\left(\tfrac{X \times I \times I}{\partial(X \times I \times I)} ; \Z/2\right).$$
The composition $\phi \circ \psi^{-1} \colon (X \times I \times I, \partial(X \times I \times I)) \to (X \times I \times I, \partial(X \times I \times I))$ is the identity on $\quad X \times I \times \{0\} \cup \partial(X \times I) \times I$ and sends $X \times I \times \{1\}$ into itself, so by Lemma \ref{lem:excrosseye} it is homotopic as a map of pairs to the identity. It follows that $\psi^{-1}$ is homotopic as  map of pairs to $\phi^{-1}$, so the right-hand side of the previous equation
may be written as $(\phi^{-1})^* \ks(M, \partial M)$.

On the other hand, by definition of the normal invariant map $\eta_\Top$ the composition
$$\mathcal{S}^{\Top}_\partial(X \times I \times I) \overset{\eta_\Top}\lra {\left[\tfrac{X \times I \times I}{\partial (X \times I \times I)}, \G/\Top\right]} \lra {\left[\tfrac{X \times I \times I}{\partial (X \times I \times I)}, \BTop \right]}$$
sends the simple homotopy equivalence $[\phi]$ to the map classifying the stable microbundle $(\phi^{-1})^*\nu^{\Top}_M - \nu^{\Top}_{X \times I \times I}$
together with the trivialisation over $\partial (X \times I \times I)$ given by the fact that $\phi$ is a homeomorphism on $\partial(X \times I \times I)$.
Post-compose with $\BTop \overset{\theta}\to \BB(\Top/\OO) \overset{\ks}\to K(\Z/2,4)$.
Since the Kirby--Siebenmann class is additive for Whitney sum of bundles \cite[Annex 3 Lemma 15.5]{Kirby-Siebenmann:1977-1},~$\nu^{\Top}_Y$ and the stable tangent microbundle $\tau^{\Top}_Y$ have the same Kirby--Siebenmann class.
We therefore obtain
\begin{align*}
\ks((\phi^{-1})^*\nu^{\Top}_M - \nu^{\Top}_{X \times I \times I}) &=  (\phi^{-1})^*\ks(\nu^{\Top}_M) - \ks(\nu^{\Top}_{X \times I \times I}) = (\phi^{-1})^*\ks(\tau^{\Top}_M) - \ks(\tau^{\Top}_{X \times I \times I}) \\
&= (\phi^{-1})^*\ks(M,\partial M) - \ks(X \times I \times I,\partial) \\ &=
(\phi^{-1})^* \ks(M, \partial M) \in \widetilde{H}^4\Big(\tfrac{X \times I \times I}{\partial(X \times I \times I)} ; \Z/2\Big),
  \end{align*}
where we have again used additivity, that $\nu^{\Top}_Y$ and $\tau^{\Top}_Y$ are stable inverses, the definition of $\ks(Y,\partial Y)$, and that  $X \times I \times I$ is smooth so has trivial Kirby--Siebenmann class.
This completes the verification that both passages around the diagram yield the same result.
\end{proof}

We give an outline of our strategy for the proof of Theorem \ref{thm:main-technical} \ref{it:main-technicalC}.
Recall that our goal is to show that
the image of $PD\circ \ks \,{\circ}\, \,\Sm \,{\circ}\, \alpha \,{\circ}\, \partial$ contains $\ker \big(\delta_* \colon H_2(X;\Z/2) \to H_1(X ; \Z_{(2)})\big)$.
As the codomain of the right vertical map in diagram~\eqref{eqn:diagram-mapping-surgery-seq-in} is a vector space over $\Z/2$, this map factors through the $2$-localisation of its domain.
This allows us to augment diagram~\eqref{eqn:diagram-mapping-surgery-seq-in} as follows
\begin{equation}\label{eq:diagram-pf-strategy-for-23C}
\begin{tikzcd}[column sep=1em]
\pi_2\Big(\frac{\sAut^+_\partial(X)}{\pseudoHomeo^+_\partial(X)}\Big) \arrow[r, "\cong"] \ar[dd, "\partial"] & \mathcal{S}^{\Top}_\partial(X \times I \times I) \ar[d]\ar[r, "\eta_\Top"] & {\left[\tfrac{X \times I \times I}{\partial (X \times I \times I)}, \G/\Top\right]} \ar[d] &
\\
& \mathcal{S}^{\Top}_\partial(X \times I \times I)_{(2)} \ar[r, "\eta_\Top"] & {\left[\tfrac{X \times I \times I}{\partial (X \times I \times I)}, \G/\Top\right]_{(2)}} \dar \ar[r, "\sigma"]& L_{6}(\Z[\pi])_{(2)}
\\
\pi_1(\pseudoHomeo^+_\partial(X)) \rar{\alpha} & Q(X) \rar{\Sm} & {\left[\tfrac{X \times I \times I}{\partial (X \times I \times I)}, \BB(\Top/\OO)\right]} \ar[d, "\ks","\cong"']&
\\
&& \widetilde{H}^4\left(\tfrac{X \times I \times I}{\partial (X \times I \times I)};\Z/2\right) \arrow[r,"\cong"', "PD"] & H_2(X;\Z/2).
\end{tikzcd}
\end{equation}
The $2$-localised row in diagram~\eqref{eq:diagram-pf-strategy-for-23C} is meaningful because the surgery exact sequence for $X \times I \times I$ is an exact sequence of abelian groups, and can therefore be localised at 2, which furthermore preserves exactness.
Diagram~\eqref{eqn:diagram-mapping-surgery-seq-in}, with the right vertical arrow factored through the $2$-localisation, is a subdiagram of diagram~\eqref{eq:diagram-pf-strategy-for-23C}, and so this big square commutes by Lemma~\ref{lem:normalcommute}. The smaller square in diagram~\eqref{eq:diagram-pf-strategy-for-23C} commutes, by naturality of localisation. So diagram~\eqref{eq:diagram-pf-strategy-for-23C} commutes overall.

We will produce, for each $u \in \ker \big(\delta_* \colon H_2(X;\Z/2) \to H_1(X ; \Z_{(2)})\big)$, a lift to an element $\xi_u \in {\big[\tfrac{X \times I \times I}{\partial (X \times I \times I)}, \G/\Top\big]_{(2)}}$  such that $\sigma(\xi_u)=0$. By exactness we will then obtain an element in~$\mathcal{S}^{\Top}_\partial(X \times I \times I)_{(2)}$ mapping to $u$. After multiplying by an odd integer, this lifts to an element~$\mathcal{S}^{\Top}_\partial(X \times I \times I)$, also mapping to $u$. It follows that $u$ is in the image of the clockwise composition starting in the top left corner. By commutativity of the big square, we can conclude $u$ lies in the image of~$PD\circ\ks \,{\circ}\, \,\Sm \,{\circ}\, \alpha \,{\circ}\, \partial$. This will complete the proof of Theorem \ref{thm:main-technical}~\ref{it:main-technicalC}.

\subsection{Ranicki--Sullivan duality and Poincar\'{e} duality}\label{sec:algsurgery}
To do what we have just described, we will use Ranicki's algebraic surgery exact sequence; see~\cite{MR1211640}. In the topological category, this is an exact sequence isomorphic to the geometric surgery exact sequence, and expressed in terms of the \emph{$\mathbb{L}$-spectra}. In the algebraic surgery exact sequence, the surgery obstruction map~$\sigma$ from the geometric surgery exact sequence factors through the \emph{Ranicki--Sullivan duality} map, followed by the \emph{algebraic assembly} map, as we shall describe below. The main task of this section is to relate Ranicki--Sullivan duality back to ordinary Poincar\'{e} duality.

\subsubsection{$\mathbb{L}$-spectra}
Given a ring $R$ with anti-involution (often called an involution in the surgery literature),
 there are defined spectra $\mathbb{L}^s(R)$ and $\mathbb{L}^q(R)$, respectively the \emph{symmetric} and \emph{quadratic}~$L$-spectra. The reader is referred to~\cite{MR1211640} for the construction, but note we are using the naming convention of~\cite{lurienotes}, whereas Ranicki writes~$\mathbb{L}^\bullet(R)$ and~$\mathbb{L}_\bullet(R)$ respectively for the symmetric and quadratic spectra. Products in $L$-theory (see~\cite[Appendix~B]{MR1211640}, \cite{Ranicki-ATS-I})
 endow~$\mathbb{L}^s(R)$ with the structure of a ring spectrum and $\mathbb{L}^q(R)$ with the structure of an $\mathbb{L}^s(R)$-module spectrum. We will denote by $\mathbb{L}^s = \tau_{\geq 0}\mathbb{L}^s(\Z)$, the 0-connective cover of $\mathbb{L}^s(\Z)$, by $\mathbb{L}^q = \tau_{\geq 0}\mathbb{L}^q(\Z)$ the 0-connective cover of $\mathbb{L}^q(\Z)$, and by $\mathbb{L}^q\langle 1 \rangle = \tau_{\geq 1}\mathbb{L}^q(\Z)$ the 1-connective cover of $\mathbb{L}^q(\Z)$; $\mathbb{L}^s$ is again a ring spectrum and $\mathbb{L}^q$ and $\mathbb{L}^q\langle 1 \rangle$ are $\mathbb{L}^s$-module spectra.
We will continue with the notation~$E_r(-)$ and~$E^r(-)$ to refer to the corresponding (co)homology theories for $E=\mathbb{L}^s, \mathbb{L}^q, \mathbb{L}^q\langle 1\rangle$, where Ranicki uses the notation $H_r(-;E)$ and $H^r(-;E)$.

\subsubsection{The symmetric $L$-theory fundamental class}
There is a morphism $\sigma\colon \MSTop\to \mathbb{L}^s$ of ring spectra, referred to as the \emph{Ranicki orientation}; see~\cite[\textsection16]{MR1211640}. For an oriented topological $d$-manifold $Y$, write $\nu^{\Top}_Y \colon Y\to \BSTop$ for the oriented stable normal microbundle of the pair $(Y,\partial Y)$. We have an the induced map of spectra $\Th(\nu^{\Top}_Y)\to\MSTop$, and
the composition $\Th(\nu^{\Top}_Y) \to \MSTop  \xrightarrow{\sigma} \mathbb{L}^s$ results in a
$\mathbb{L}^s$-theory Thom class $U_{\mathbb{L}^s}\in
(\mathbb{L}^s)^0(\Th(\nu^{\Top}_Y))
$; see~\cite[\textsection16]{MR1211640}. Under Atiyah duality
$(\mathbb{L}^s)^0(\Th(\nu^{\Top}_Y))\cong (\mathbb{L}^s)_d(Y/\partial Y)$
we therefore obtain an~$\mathbb{L}^s$-theory fundamental class~$[Y]_{\mathbb{L}^s}\in (\mathbb{L}^s)_d(Y/\partial Y)$.

\subsubsection{The surgery obstruction and assembly}\label{subsection:surgery-obstr-and-assembly}
In~\cite{MR561227}, Ranicki constructed a homotopy equivalence $\G/\Top\simeq \Omega^\infty \mathbb{L}^q\langle 1 \rangle$, which we use to identify these two spaces from now on. In particular, if $Y$ is a topological $d$-manifold as above then we have an identification
\begin{equation}\label{eqn:defn-of-D}
\begin{tikzcd}[column sep=3.5ex]
\mathrm{D}\colon \left[Y/\partial Y, \G/\Top\right] \cong \left[Y/\partial Y, \Omega^\infty \mathbb{L}^q\langle 1 \rangle\right] \cong \left[\Sigma^\infty (Y/\partial Y), \mathbb{L}^q\langle 1 \rangle\right] \ar[rr, "{[Y]_{\mathbb{L}^s} \frown -}","{\cong}"']
&& \left[\bS^d, \Sigma^\infty Y_+ \wedge \mathbb{L}^q\langle 1 \rangle\right],
\end{tikzcd}
\end{equation}
given by Poincar{\'e} duality in $\mathbb{L}^q\langle 1 \rangle$-theory (this is the duality often referred to as \emph{Ranicki--Sullivan duality}).
Note that the first two terms refer to maps of spaces, while the last two refer to maps of spectra.
Under this identification, the surgery obstruction map from~\cite[\textsection3]{Wall-surgery-book} factors as
\begin{equation*}
\begin{tikzcd}
{\left[Y/\partial Y, \G/\Top\right]} \arrow[rr, "\sigma"] \arrow[d, "\mathrm{D}","\cong"'] & & L_d(\Z[\pi])\\
{\left[\bS^d, \Sigma^\infty Y_+ \wedge \mathbb{L}^q\langle 1 \rangle\right]} \rar & {\left[\bS^d, \Sigma^\infty Y_+ \wedge \mathbb{L}^q\right]} \rar{\mft_*} & {\left[\bS^d, \Sigma^\infty \BB\pi_+ \wedge \mathbb{L}^q\right]} \arrow[u, "A_\pi"]
\end{tikzcd}
\end{equation*}
where the first lower map is induced by the canonical map $\mathbb{L}^q\langle 1 \rangle \to \mathbb{L}^q$ given by the connective cover, the second is induced by the Postnikov truncation map $\mft\colon Y \to \BB\pi$, and the third map~$A_\pi$ is the assembly map in quadratic $L$-theory, defined in~\cite{MR561227}.
This is shown in \cite[Appendix~2, Theorem~2]{HMTW-surgery-finite}, where it is credited to Quinn~\cite{MR282375} and Ranicki~\cite{MR561227,MR1211640}, with a  contribution by Nicas~\cite{Nicas}.

\medskip

Our objective for the remainder of this subsection is prove Lemma~\ref{lem:TheTwoIsomorphisms}, below, where we relate Ranicki--Sullivan duality back to ordinary Poincar\'{e} duality.

\subsubsection{Splitting $L$-theory spectra}

The 2-localised spectra $\mathbb{L}^s_{(2)}$ and $\mathbb{L}^q\langle 1 \rangle_{(2)}$ are generalised Eilenberg--Maclane spectra; see~\cite[\textsection2]{MR557167}.
Moreover, Taylor and Williams~\cite{MR557167} write down \emph{specific} equivalences of $\mathrm{H}\Z$-module spectra
\begin{equation}\label{eq:symmetric}
\begin{pmatrix} l^s \\ r\end{pmatrix}\colon \mathbb{L}^s_{(2)} \xrightarrow{\simeq} \bigoplus _{i\geq0}\mathrm{H}\Z_{(2)}[4i]\,\oplus \bigoplus _{j\geq0}\mathrm{H}\Z/2[4j+1],
\end{equation}
and
\begin{equation}\label{eq:quadratic}
\begin{pmatrix} l^q \\ k\end{pmatrix}\colon \mathbb{L}^q\langle 1 \rangle_{(2)} \xrightarrow{\simeq} \bigoplus _{i\geq 1}\mathrm{H}\Z_{(2)}[4i]\,\oplus \bigoplus _{j\geq0}\mathrm{H}\Z/2[4j+2].
\end{equation}

\begin{lemma}\label{lem:KSformula}
Under the identification $\G/\Top\simeq \Omega^\infty \mathbb{L}^q\langle 1 \rangle$ and with respect to the Taylor--Williams splitting~\eqref{eq:quadratic}, the cohomology class $\G/\Top \to \BTop \to \BB(\Top/\OO) \overset{\ks}\to K(\Z/2,4)$ is given by $\mathrm{red}_2(l^q_4) + k_2 \smile k_2$.  That is, the following diagram commutes
\[
\begin{tikzcd}[column sep = huge]
  \G/\Top \ar[r] \ar[d] & \Omega^\infty \mathbb{L}^q\langle 1 \rangle_{(2)} \ar[d,"l^q_4 \times k_2","\eqref{eq:quadratic}"'] \\
  K(\Z/2,4) &  K(\Z_{(2)},4) \times K(\Z/2,2). \ar[l,"\mathrm{red}_2(-) + - \smile -"]
\end{tikzcd}
\]
\end{lemma}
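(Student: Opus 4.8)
The plan is to reduce the identity of cohomology classes to a computation in the $2$-local homotopy groups of $\BB(\Top/\OO)$, exploiting that $\ks\colon\BB(\Top/\OO)\to K(\Z/2,4)$ is $8$-connected and that, through that range, $\Top/\OO$ and $\G/\Top$ are controlled by well-understood homotopy groups. Concretely, the composite $\G/\Top\to\BTop\to\BB(\Top/\OO)\xrightarrow{\ks}K(\Z/2,4)$ is a class in $H^4(\G/\Top;\Z/2)$, so by the splitting \eqref{eq:quadratic} it is determined by its restriction to the factors $\mathrm{H}\Z_{(2)}[4]$ and $\mathrm{H}\Z/2[2]$, i.e.\ by an expression in $\mathrm{red}_2(l^q_4)$, $\mathrm{red}_2(l^q_4)\cdot(\text{something in degree }0)$—which doesn't occur—and $k_2\smile k_2$ (the only degree-$4$ monomials available from the listed generators of $H^*(\G/\Top;\Z/2)$ in this range are $\iota_4$ from the $\mathrm{H}\Z_{(2)}[4]$ summand and the square $\iota_2^2$ from the $\mathrm{H}\Z/2[2]$ summand). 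So the claimed formula $\mathrm{red}_2(l^q_4)+k_2\smile k_2$ is the only candidate up to determining two $\Z/2$-coefficients, and it remains to pin those coefficients down.

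First I would record the two standard input facts. (a) The coefficient of $\mathrm{red}_2(l^q_4)$ is $1$: detect on the generator of $\pi_4(\G/\Top)\cong\pi_4(\mathbb{L}^q\langle 1\rangle)\cong L_4(\Z)\cong\Z$, which is represented by the Milnor $E_8$-manifold (plumbing) normal map; its image under $\G/\Top\to\BTop$ has Kirby--Siebenmann invariant equal to the mod-$2$ reduction of its signature divided by $8$, which is odd. Equivalently one cites the classical fact that on $\pi_4$ the map $\ks$ agrees with $\tfrac18\sigma\bmod 2$, while $l^q_4$ is (a unit multiple of) $\tfrac18\sigma$ on $L_4(\Z)$ by Taylor--Williams' normalisation; choosing their normalisation the unit is $1$. (b) The coefficient of $k_2\smile k_2$ is $1$: this is the surgery-theoretic incarnation of the Rokhlin/Kirby--Siebenmann relation and is exactly the statement, due to Kirby--Siebenmann and reproved via $L$-theory by Morgan--Sullivan and Taylor--Williams, that the Kirby--Siebenmann class of a normal map is $\mathrm{KS}=\langle k_2\cup k_2\rangle$ modulo the signature contribution. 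I would cite \cite[Annex~3]{Kirby-Siebenmann:1977-1} together with \cite{MR557167} for (b), and the $E_8$-plumbing computation for (a).

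The cleanest way to actually run the argument: restrict the whole diagram to skeleta / Postnikov stages through dimension $4$. In that range $\mathbb{L}^q\langle 1\rangle_{(2)}$ splits as $\mathrm{H}\Z_{(2)}[4]\oplus\mathrm{H}\Z/2[2]$ (the higher summands contribute only in degrees $\geq 5$ resp.\ $\geq 6$, hence nothing to $H^4$), and $\BB(\Top/\OO)_{(2)}$ through degree $7$ is just $K(\Z/2,4)$; so the diagram in the statement, after $2$-localisation and $4$-truncation, is a diagram of generalised Eilenberg--MacLane spectra and the class in question is literally an element of $H^4(K(\Z_{(2)},4)\times K(\Z/2,2);\Z/2)$, which by Künneth and the computation of $H^*(K(\Z/2,2);\Z/2)$ (polynomial on $\iota_2$ and $\Sq^1\iota_2=\iota_3$ in low degrees) has $\Z/2$-basis $\{\iota_4,\iota_2^2\}$ in degree $4$. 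Writing the class as $a\,\iota_4+b\,\iota_2^2$, steps (a) and (b) give $a=b=1$. I would present this as: (i) set up the truncated $2$-local diagram and identify the target as this two-dimensional $\Z/2$-vector space; (ii) evaluate on $\pi_4$ using the $E_8$ normal map to get $a=1$; (iii) invoke the Kirby--Siebenmann/Taylor--Williams formula for the $k_2^2$-term to get $b=1$; (iv) conclude the diagram commutes, which is the assertion of the lemma.

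The main obstacle is bookkeeping the normalisations: both $l^q_4$ and the map $\G/\Top\simeq\Omega^\infty\mathbb{L}^q\langle 1\rangle$ are only well-defined up to units and conventions, and one must make sure the chosen identifications (Ranicki's $\G/\Top\simeq\Omega^\infty\mathbb{L}^q\langle 1\rangle$ from \cite{MR561227}, and Taylor--Williams' splitting \eqref{eq:quadratic}) are compatible so that the coefficient of $\mathrm{red}_2(l^q_4)$ comes out as $+1$ rather than merely "a unit". I expect this to be the part requiring the most care; everything else is a connectivity/Künneth reduction plus two classical computations. (A sanity check I would include: both sides vanish on the image of $\BO\to\BTop$ after looping, as they must since $\ks$ does, consistent with $l^q_4$ and $k_2$ both restricting to zero on the smooth normal maps.)
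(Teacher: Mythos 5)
Your reduction is identical to the paper's: both identify the class as an element of $H^4(\G/\Top;\Z/2)\cong\Z/2\{\mathrm{red}_2(l^q_4),\,k_2\smile k_2\}$ via the $6$-connected map to $K(\Z_{(2)},4)\times K(\Z/2,2)$, so that only two coefficients $A,B\in\Z/2$ need to be determined. Where you diverge is in pinning them down: the paper deliberately avoids evaluating on manifolds and instead uses the fibre sequence $\G/\PL\to\G/\Top\to\BB(\Top/\PL)\simeq K(\Z/2,4)$ together with Sullivan's identification of $\tau_{\leq 5}(\G/\PL)_{(2)}$ as the fibre of $\delta_*\circ\Sq^2$; the value $A=1$ falls out of getting $\pi_3$ right, and $B=1$ from the nontriviality of the operation $\delta_*\circ\Sq^2$ (a splitting argument). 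Your route — evaluating on degree-one normal maps — is exactly the alternative the paper flags in the remark following the lemma, and it buys a more geometric, checkable argument at the cost of normalisation bookkeeping, which you rightly identify as the delicate point.

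The one genuine soft spot is your step (b). You determine the coefficient of $k_2\smile k_2$ by citing "the Kirby--Siebenmann/Taylor--Williams formula" for the Kirby--Siebenmann class of a normal map; but that formula is essentially the lemma being proved, and the authors state explicitly that they could not locate a proof of it in the literature. As written, (b) is closer to a restatement than a derivation. To make your route self-contained you should carry out the second evaluation the remark suggests: take the degree-one normal map $*\CP^2\to\CP^2$ from the Chern manifold. Its normal invariant has trivial $l^q_4$-component (the signatures agree) and nontrivial $k_2$-component, while $\ks(*\CP^2)=1$, which forces $B=1$. With that computation supplied, your argument is complete and parallel in strength to the paper's.
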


This is surely known in some form, but we were unable to find a proof.
We give one here that manages to avoid the specifics of the definitions of $l^q_4$ or $k_2$.

\begin{proof}
Recall that $\Top/\PL \simeq K(\Z/2,3)$, and that the composition in the statement of the lemma is the map $u$ in the fibre sequence $\G/\PL \to \G/\Top \overset{u}\to \BB(\Top/\PL) \simeq K(\Z/2,4)$. With respect to the decomposition
\[(\G/\Top)_{(2)} \simeq \Omega^\infty \mathbb{L}^q\langle 1 \rangle_{(2)} \simeq \prod_{i \geq 1} K(\Z_{(2)}, 4i) \times \prod_{j \geq 0} K(\Z/2, 4j+2)\]
induced by the maps $l^q_{4i}$ and $k_{4j+2}$ from \eqref{eq:quadratic}, we have
\[u \in H^4(\G/\Top ; \Z/2) \cong \Z/2\{\mathrm{red}_2(l^q_4), k_2 \smile k_2\}.\]
To see this isomorphism, first note that since $\G/\Top \to \G/\Top_{(2)}$ induces an isomorphism on homology with $\Z_{(2)}$-coefficients (by definition of localisation), the same map induces an isomorphism on (co)homology with any $\Z_{(2)}$-module coefficients by universal coefficients.  Since $\Z/2$ is a~$\Z_{(2)}$-module, we have an isomorphism0
$H^4(\G/\Top;\Z/2) \cong H^4(\G/\Top_{(2)};\Z/2)$.
In addition, the map $\G/\Top_{(2)} \to K(\Z_{(2)},4) \times K(\Z/2,2)$ is 6-connected, so the latter group $H^4(\G/\Top_{(2)};\Z/2)$ is isomorphic to
\[H^4(K(\Z_{(2)},4) \times K(\Z/2,2);\Z/2) \cong \Z/2\{\mathrm{red}_2(l^q_4),k_2 \smile k_2\},\]
as claimed.

It follows that $u = A\cdot \mathrm{red}_2(l^q_4) + B \cdot k_2 \smile k_2$ for some $A, B \in \Z/2$.
In these terms, the 5-truncation of the 2-localised $\G/\PL$ is described as a homotopy pullback
\begin{equation}\label{eq:GPLpullback}
\begin{tikzcd}
\tau_{\leq 5}(\G/\PL)_{(2)} \dar{k_2} \rar{l^q_4} & K(\Z_{(2)}, 4) \dar{A\cdot \mathrm{red}_2}\\
K(\Z/2, 2) \rar{B \cdot \mathrm{Sq^2}} & K(\Z/2, 4).
\end{tikzcd}
\end{equation}
Now Sullivan has shown (see e.g.\ \cite[Theorem 4.8]{MadsenMilgram}) that $\tau_{\leq 5} (\G/\PL)_{(2)}$ is homotopy equivalent to the homotopy fibre of $\delta_* \,{\circ}\,\mathrm{Sq}^2\colon K(\Z/2, 2) \to K(\Z_{(2)},5)$, where $\delta_*\colon K(\Z/2, 4) \to K(\Z_{(2)},5)$ denotes the map classifying the cohomology Bockstein. From this we first see that $A=1$, otherwise the pullback \eqref{eq:GPLpullback} would give the wrong~$\pi_3$. Given that $A=1$, the diagram \eqref{eq:GPLpullback} expresses~$\tau_{\leq 5}(\G/\PL)_{(2)}$ as the homotopy fibre of $B \cdot \delta_* \,{\circ}\,\mathrm{Sq}^2$, because we can extend the diagram downwards as follows, such that the right-hand column is a fibration sequence corresponding to the Bockstein long exact sequence.
\[\begin{tikzcd}
\tau_{\leq 5}(\G/\PL)_{(2)} \dar{k_2} \rar{l^q_4} & K(\Z_{(2)}, 4) \dar{\mathrm{red}_2}\\
K(\Z/2, 2) \ar[d,"B\cdot \delta_* \circ \Sq^2"] \rar{B \cdot \mathrm{Sq^2}} & K(\Z/2, 4) \ar[d,"\delta_*"]\\
K(\Z_{(2)},5) \ar[r,"="] & K(\Z_{(2)},5)
\end{tikzcd}\]
If $B=0$ then we would have $\tau_{\leq 5}(\G/\PL)_{(2)} \simeq K(\Z/2,2) \times K(\Z_{(2)},4)$.  If this were true then the fibration sequence $K(\Z_{(2)},4) \to \tau_{\leq 5}(\G/\PL)_{(2)} \xrightarrow{f} K(\Z/2,2) \xrightarrow{\delta_* \,{\circ}\,\mathrm{Sq}^2} K(\Z_{(2)},5)$ arising from Sullivan's theorem would admit a splitting $i_1 \colon K(\Z/2,2) \to \tau_{\leq 5}(\G/\PL)_{(2)}$ with $f \circ i_1 \simeq \Id$,
whence we have homotopies of maps \[\ast = \ast \circ i_1 \simeq (\delta_* \,{\circ}\,\mathrm{Sq}^2) \circ f \circ i_1 \simeq (\delta_* \,{\circ}\,\mathrm{Sq}^2) \circ \Id = \delta_* \,{\circ}\,\mathrm{Sq}^2 \colon K(\Z/2,2) \to  K(\Z_{(2)},5).\]
Here
$\ast$ denotes a constant map.  This contradicts the nontriviality of the cohomology operation~$\delta_* \,{\circ}\,\mathrm{Sq}^2$.
Thus we must have $B=1$.
\end{proof}

\begin{remark}
An alternative proof that $A=B=1$ could be made that proceeds by evaluating on degree one normal maps $E_8 \to S^4$ and $*\CP^2 \to \CP^2$.
\end{remark}

Recall that for a space $X$ and a double loop space or spectrum $Y$,
 localising induces an isomorphism of abelian groups  $[X,Y_{(2)}] = [X,Y]_{(2)}$.
We will use this without further comment.

Via $\G/\Top\simeq \Omega^\infty \mathbb{L}^q\langle 1 \rangle$, the Taylor--Williams splitting~\eqref{eq:quadratic} of $\mathbb{L}^q\langle 1 \rangle_{(2)}$ gives an identification
\begin{equation}\label{TW-identification-one}
\left[Y/\partial Y, \G/\Top\right]_{(2)} \! \cong \! \left[\Sigma^\infty (Y/\partial Y), \mathbb{L}^q\langle 1 \rangle_{(2)}\right] \! \underset{TW}{\cong} \! \bigoplus_{i\geq 1} H^{4i}(Y, \partial Y ;\Z_{(2)}) \oplus \bigoplus _{j\geq0} H^{4j+2}(Y, \partial Y ; \Z/2),
\end{equation}
and using (ordinary) Poincar{\'e} duality, with respect to the given $\Z$-orientation of $Y$, we can identify the latter with
\begin{equation}\label{eqn:RHS-of-two-identifications}
  \bigoplus_{i\geq 1} H_{d-4i}(Y ;\Z_{(2)})\,\oplus\, \bigoplus_{j\geq0} H_{d-4j-2}(Y; \Z/2).
\end{equation}
On the other hand, the Taylor--Williams splitting~\eqref{eq:quadratic} of $\mathbb{L}^q\langle 1 \rangle_{(2)}$ also gives an identification
\begin{equation}\label{TW-identification-two}
\left[Y/\partial Y, \G/\Top\right]_{(2)} \overset{\mathrm{D}}{\underset{\cong}\lra} \left[\bS^d, \Sigma^\infty Y_+ \wedge \mathbb{L}^q\langle 1 \rangle_{(2)} \right] \underset{TW}{\cong} \bigoplus_{i\geq 1} H_{d-4i}(Y ;\Z_{(2)}) \oplus \bigoplus_{j\geq0} H_{d-4j-2}(Y; \Z/2).
\end{equation}
\emph{The two identifications of  $\left[Y/\partial Y, \G/\Top\right]_{(2)}$ with \eqref{eqn:RHS-of-two-identifications} are not the same.}
Their difference is measured by the homology classes which correspond to $[Y]_{\mathbb{L}^s}$ under the Taylor--Williams splitting~\eqref{eq:symmetric} of $\mathbb{L}^s_{(2)}$, because \eqref{TW-identification-two} used capping with $[Y]_{\mathbb{L}^s}$, whereas passing from \eqref{TW-identification-one} to \eqref{eqn:RHS-of-two-identifications} uses capping with the ordinary homology fundamental class $[Y] = [Y]_{\mathrm{H}\Z}$. These are described in the next lemma.
For its statement, we let
\[\ell=1+\ell_1+\ell_2+\cdots \in \bigoplus_{i\geq0} H^{4i}(\BSTop;\Z_{(2)})\] denote the Morgan--Sullivan class~\cite{MR350748}, with $\ell_i \in H^{4i}(\BSTop;\Z_{(2)})$, and we write
\[\mathbb{V} := \sum_{j\geq 0}V_{2j}\smile\Sq^1(V_{2j})\in \bigoplus_{j\geq0} H^{4j+1}(\BSTop;\Z/2),
\]
where $V_k\in H^k(\BSTop;\Z/2)$ denotes the $k^{\text{th}}$ Wu class.

\begin{lemma}\label{lem:LThyFundamentalClassUnderSplitting}
Under the Taylor--Williams identification~\eqref{eq:symmetric}, the 2-local fundamental class $[Y]_{\mathbb{L}^s_{(2)}}\in (\mathbb{L}^s_{(2)})_d(Y/\partial Y)$ is given by
\[\big([Y]\frown ((\nu^{\Top}_Y)^*\ell), [Y]\frown ((\nu^{\Top}_Y)^*\mathbb{V})\big)\\
\in \bigoplus_{i\geq0} H_{d-4i}(Y,\partial Y;\Z_{(2)})\oplus \bigoplus_{j\geq0} H_{d-(4j+1)}(Y,\partial Y;\Z/2).\]
\end{lemma}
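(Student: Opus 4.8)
The plan is to unwind the definition of the symmetric $L$-theory fundamental class, push it through the Taylor--Williams splitting~\eqref{eq:symmetric}, and recognise each resulting component via Atiyah duality. Recall that $[Y]_{\mathbb{L}^s}$ is defined as the image of the $\mathbb{L}^s$-theory Thom class $U_{\mathbb{L}^s}$ under the Atiyah duality isomorphism $(\mathbb{L}^s)^0(\Th(\nu^{\Top}_Y))\cong(\mathbb{L}^s)_d(Y/\partial Y)$, where $U_{\mathbb{L}^s}$ is the composite $\Th(\nu^{\Top}_Y)\xrightarrow{t_\nu}\MSTop\xrightarrow{\sigma}\mathbb{L}^s$ of the Thomification $t_\nu$ of the oriented stable normal microbundle $\nu^{\Top}_Y\colon Y\to\BSTop$ with the Ranicki orientation $\sigma$. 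Write $\Psi\colon\mathbb{L}^s_{(2)}\xrightarrow{\simeq}\bigoplus_{i\ge0}\mathrm{H}\Z_{(2)}[4i]\oplus\bigoplus_{j\ge0}\mathrm{H}\Z/2[4j+1]$ for the Taylor--Williams equivalence. Since $\Psi$ is an equivalence and Atiyah duality is natural in the spectrum variable (the isomorphism $E^0(\Th(\nu^{\Top}_Y))\cong E_d(Y/\partial Y)$ being induced by the Spanier--Whitehead pairing), the class $[Y]_{\mathbb{L}^s_{(2)}}$ is detected by $\Psi_*[Y]_{\mathbb{L}^s_{(2)}}$, which equals the image under Atiyah duality of $\Psi\circ U_{\mathbb{L}^s_{(2)}}=\Psi\circ\sigma\circ t_\nu$ (after $2$-localising $\sigma$), viewed as a tuple in $\bigoplus_{i\ge0}\widetilde{H}^{4i}(\Th(\nu^{\Top}_Y);\Z_{(2)})\oplus\bigoplus_{j\ge0}\widetilde{H}^{4j+1}(\Th(\nu^{\Top}_Y);\Z/2)$. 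It thus remains to compute this tuple and to understand the effect of Atiyah duality on each entry.

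The heart of the matter is the Taylor--Williams analysis of the Ranicki orientation under the splitting: I would use that, under the Thom isomorphism $\widetilde{H}^*(\MSTop;A)\cong H^*(\BSTop;A)$, the degree-$4i$ component of $\Psi\circ\sigma\colon\MSTop_{(2)}\to\bigoplus_i\mathrm{H}\Z_{(2)}[4i]\oplus\bigoplus_j\mathrm{H}\Z/2[4j+1]$ corresponds to the Morgan--Sullivan class $\ell_i\in H^{4i}(\BSTop;\Z_{(2)})$ (this is essentially the defining property of the integral refinement of the $L$-genus; cf.~\cite{MR350748}), that its degree-$0$ component is $\ell_0=1$, i.e.\ the ordinary integral Thom class, because $\sigma$ is a map of ring spectra and hence unital, and that its degree-$(4j+1)$ component corresponds to $V_{2j}\smile\Sq^1(V_{2j})\in H^{4j+1}(\BSTop;\Z/2)$; see~\cite{MR557167} (see also~\cite{MadsenMilgram,MR1211640}). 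Precomposing with $t_\nu$, and using that $t_\nu^*$ carries the universal Thom class to the Thom class $U$ of $\nu^{\Top}_Y$ and intertwines the two Thom isomorphisms with the pullback $(\nu^{\Top}_Y)^*$, I would conclude that $\Psi\circ U_{\mathbb{L}^s_{(2)}}$ corresponds, under the cohomology Thom isomorphism $H^n(Y;A)\xrightarrow{\,-\smile U\,}\widetilde{H}^n(\Th(\nu^{\Top}_Y);A)$, to the tuple $\big(((\nu^{\Top}_Y)^*\ell_i)_i,\,((\nu^{\Top}_Y)^*(V_{2j}\smile\Sq^1(V_{2j})))_j\big)$.

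To finish, I would invoke the identification recalled at the end of \cref{subsec:orientation}, in the form relating $\widetilde{H}^n(\Th(\nu^{\Top}_Y);A)$ to $H_{d-n}(Y/\partial Y;A)$ with $\mathrm{H}A$ regarded as a module over itself: the composite of the cohomology Thom isomorphism $H^n(Y;A)\xrightarrow{\,-\smile U\,}\widetilde{H}^n(\Th(\nu^{\Top}_Y);A)$ with Atiyah duality $\widetilde{H}^n(\Th(\nu^{\Top}_Y);A)\cong H_{d-n}(Y/\partial Y;A)$ is cap product with the fundamental class $[Y]$, where one uses that the $\mathrm{H}\Z$-orientation $[Y]_{\mathrm{H}\Z}$ is the chosen $\Z$-orientation $[Y]$ (this is Poincar\'{e}--Lefschetz duality; see~\cite[V.1.3]{Rudyak} and~\cite[Theorem~14.41]{MR1886843}). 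Applying this entrywise to the tuple from the previous paragraph and summing over degrees gives $\Psi_*[Y]_{\mathbb{L}^s_{(2)}}=\big([Y]\frown(\nu^{\Top}_Y)^*\ell,\;[Y]\frown(\nu^{\Top}_Y)^*\mathbb{V}\big)$, which is exactly the asserted formula.

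The step I expect to be the main obstacle is the Taylor--Williams identification in the second paragraph. One must match the conventions of~\cite{MR557167} with those used here --- the connective symmetric $L$-spectrum $\mathbb{L}^s=\tau_{\ge0}\mathbb{L}^\bullet(\Z)$, the chosen generators of the Eilenberg--MacLane summands (which fix the identification of the integral part of $\Psi\circ\sigma$ with $\ell$ itself, and fix the sign of $\pi_0(\sigma)$), and the normalisations of the Morgan--Sullivan class and of the Wu classes --- so that the mod-$2$ components come out exactly as $V_{2j}\smile\Sq^1(V_{2j})$ and not some other combination of Wu classes. Once these bookkeeping matters are pinned down, the remaining ingredients --- naturality of Atiyah duality in the spectrum variable, compatibility of the Thom isomorphisms for $\MSTop$ and $\Th(\nu^{\Top}_Y)$ under $t_\nu$, and Poincar\'{e}--Lefschetz duality --- are routine.
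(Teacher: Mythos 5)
Your proposal is correct and follows essentially the same route as the paper's proof: both rest on the Taylor--Williams computation (their (1.9)) identifying $\sigma^*l^s$ and $\sigma^*r$ with the Thom-isomorphism images of $\ell$ and $\mathbb{V}$, pull back along $\Th(\nu^{\Top}_Y)\to\MSTop$, and then apply Atiyah duality (as Thom isomorphism followed by Poincar\'e duality) together with its naturality in the spectrum variable. Your closing remarks about matching conventions are exactly the bookkeeping the paper delegates to the citation, so there is no gap.
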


\begin{proof}
By \cite[(1.9)]{MR557167} the pullbacks under the Ranicki orientation
\[
\sigma^*l^s\in\bigoplus_{i\geq0} H^{4i}(\MSTop;\Z_{(2)})\qquad \text{and}\qquad \sigma^*r\in\bigoplus_{j\geq0} H^{4j+1}(\MSTop;\Z/2)
\]
are the images under the Thom isomorphism of the classes $\ell$ and $\mathbb{V}$ respectively. Pulling these back along $\Th (\nu_Y^{\Top}) \to \MSTop$ we obtain the identities
\begin{align*}
l^s_*U_{\mathbb{L}_{(2)}^s} &= ((\nu_Y^\Top)^*\ell) \smile U_{\mathrm{H}\Z_{(2)}} \in \bigoplus_{i\geq0} H^{4i}(\Th (\nu_Y^{\Top});\Z_{(2)})\\
r_*U_{\mathbb{L}_{(2)}^s} &= ((\nu_Y^\Top)^*\mathbb{V}) \smile U_{\mathrm{H}\Z_{(2)}} \in \bigoplus_{j\geq0} H^{4j+1}(\Th (\nu_Y^{\Top});\Z/2).
\end{align*}
Applying Atiyah duality ($AD$), which may be written as the composition of the inverse of the Thom isomorphism followed by Poincar{\'e} duality, we obtain the identities
\begin{align*}
AD(l^s_*U_{\mathbb{L}_{(2)}^s}) &= [Y] \frown ((\nu_Y^\Top)^*\ell)  \in \bigoplus_{i\geq0} H_{d-4i}(Y,\partial Y;\Z_{(2)})\\
AD(r_*U_{\mathbb{L}_{(2)}^s}) &= [Y] \frown ((\nu_Y^\Top)^*\mathbb{V})  \in \bigoplus_{j\geq0} H_{d-(4j+1)}(Y,\partial Y;\Z/2).
\end{align*}
By definition of the fundamental class $[Y]_{\mathbb{L}^s_{(2)}}\in (\mathbb{L}^s_{(2)})_d(Y/\partial Y)$ as the image of the generalised Thom class $U_{\mathbb{L}_{(2)}^s}\in (\mathbb{L}^s_{(2)})^0(\Th(\nu^{\Top}_Y))$ under Atiyah duality, we have identities $AD(l^s_*U_{\mathbb{L}_{(2)}^s}) = l^s_*AD(U_{\mathbb{L}_{(2)}^s}) = l^s_* [Y]_{\mathbb{L}_{(2)}^s} \in \bigoplus_{i\geq0} H_{d-4i}(Y,\partial Y;\Z_{(2)})$. Similarly, for the other case we have identities $AD(r_*U_{\mathbb{L}_{(2)}^s}) = r_*AD(U_{\mathbb{L}_{(2)}^s}) = r_* [Y]_{\mathbb{L}_{(2)}^s} \in \bigoplus_{j\geq0} H_{d-(4j+1)}(Y,\partial Y;\Z/2)$. Combining  these with the previous display yields the statement of the lemma.
\end{proof}

The two identifications of  $\left[Y/\partial Y, \G/\Top\right]_{(2)}$ with \eqref{eqn:RHS-of-two-identifications} discussed above are related as follows.

\begin{lemma}\label{lem:TheTwoIsomorphisms}
The diagram
\[
\begin{tikzcd}[cramped, column sep = tiny]
(\mathbb{L}^q\langle 1 \rangle_{(2)})^0(Y/\partial Y) \ar[d,"="] & \left[Y/\partial Y, \G/\Top\right]_{(2)} \ar[l,"\cong"'] \ar[r,"\mathrm{D}","\cong"'] & (\mathbb{L}^q\langle 1 \rangle_{(2)})_d(Y) \ar[d,"="]  \\
{\left[\Sigma^\infty (Y/\partial Y), \mathbb{L}^q\langle 1 \rangle_{(2)}\right]} \arrow[d, "TW \eqref{TW-identification-one}", "\cong"']\ar[rr,"{[Y]_{\mathbb{L}^s_{(2)}} \frown -}","\cong"'] && {\left[\bS^d, \Sigma^\infty Y_+ \wedge \mathbb{L}^q\langle 1 \rangle_{(2)}\right]} \arrow[dd, "TW \eqref{TW-identification-two}", "\cong"']  \\
 \begin{tabular}{c}
$\bigoplus_{i\geq1} H^{4i}(Y,\partial Y;\Z_{(2)})$ \\ $\hspace{8ex}\oplus\bigoplus_{j\geq0} H^{4j+2}(Y,\partial Y;\Z/2)$ \end{tabular}
\ar[d,"{\Phi}","\cong"']\\
 \begin{tabular}{c}
$\bigoplus_{i\geq1} H^{4i}(Y,\partial Y;\Z_{(2)})$ \\ $\hspace{8ex}\oplus\bigoplus_{j\geq0} H^{4j+2}(Y,\partial Y;\Z/2)$ \end{tabular}
\arrow[rr, "{[Y] \frown -}", "\cong"'] &&
 \begin{tabular}{c}
$\hspace{-6ex}\bigoplus_{i\geq1}H_{d-4i}(Y;\Z_{(2)})$ \\
$\hspace{2ex}\oplus \bigoplus_{j\geq0} H_{d-4j-2}(Y;\Z/2)$
\end{tabular}
\end{tikzcd}
\]
commutes, where the map $\Phi$ is given by
$$(u,v) \mapsto ((\nu^{\Top}_Y)^* \ell \smile u, (\nu^{\Top}_Y)^*\ell \smile v + \delta^*((\nu^{\Top}_Y)^*(\mathbb{V} \smile v))$$
for $\delta^*$ the cohomology Bockstein associated to the coefficient sequence $0\to \Z_{(2)} \overset{\cdot 2}\to  \Z_{(2)}\to \Z/2\to 0$.
Here $TW$~\eqref{TW-identification-one} and $TW$~\eqref{TW-identification-two} are the maps denoted $\cong_{TW}$ in \eqref{TW-identification-one} and~\eqref{TW-identification-two} respectively.
\end{lemma}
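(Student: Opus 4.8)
The plan is to expand both routes around the diagram using the definition of the cap product with $\mathbb{L}^s$-theory fundamental classes as Atiyah duality composed with the $\mathbb{L}^s$-module structure, and then to reduce everything to a statement purely about generalised Eilenberg--MacLane spectra and the Taylor--Williams splittings. The key input is Lemma~\ref{lem:LThyFundamentalClassUnderSplitting}, which identifies $[Y]_{\mathbb{L}^s_{(2)}}$ under \eqref{eq:symmetric} with the pair of classes $\big([Y]\frown(\nu^{\Top}_Y)^*\ell,\ [Y]\frown(\nu^{\Top}_Y)^*\mathbb{V}\big)$. First I would recall from Section~\ref{subsec:orientation} that for an $\mathbb{L}^s_{(2)}$-module spectrum $E$ (here $E=\mathbb{L}^q\langle 1\rangle_{(2)}$), the Poincar\'e duality isomorphism $[Y]_{\mathbb{L}^s_{(2)}}\frown-$ factors as Atiyah duality for $E$ followed by the $E$-homology Thom isomorphism for $\nu^{\Top}_Y$; so on the level of Thom spectra the map $\mathrm D$ is just the inverse Thom isomorphism, and all the content is in how the two Thom isomorphisms (the $\mathbb{L}^s_{(2)}$-theory one and the ordinary $\mathrm H\Z$ one) differ after applying the Taylor--Williams splittings to source and target.

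The computational heart is then a multiplicativity statement in $\mathrm H\Z$-module spectra: under \eqref{eq:quadratic}, the product $\mathbb{L}^s_{(2)}\wedge\mathbb{L}^q\langle 1\rangle_{(2)}\to\mathbb{L}^q\langle 1\rangle_{(2)}$ is determined, after smashing the splittings \eqref{eq:symmetric} and \eqref{eq:quadratic} together, by how the summands $\mathrm H\Z_{(2)}[4i]$ and $\mathrm H\Z/2[4j+1]$ of $\mathbb{L}^s_{(2)}$ act on the summands $\mathrm H\Z_{(2)}[4i']$ and $\mathrm H\Z/2[4j'+2]$ of $\mathbb{L}^q\langle 1\rangle_{(2)}$. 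The $\mathrm H\Z_{(2)}$-linearity forces the $\mathrm H\Z_{(2)}[4i]$-part to act by the evident cup products, which after capping with $[Y]$ and inserting the $(\nu^{\Top}_Y)^*\ell$ factor from Lemma~\ref{lem:LThyFundamentalClassUnderSplitting} produces the term $(\nu^{\Top}_Y)^*\ell\smile u$ and the $(\nu^{\Top}_Y)^*\ell\smile v$ summand of $\Phi$. The one genuinely nontrivial point is the ``cross term'': the action of the $\mathrm H\Z/2[4j+1]$-summand of $\mathbb{L}^s_{(2)}$ (carrying the $\mathbb V$-part of $[Y]_{\mathbb{L}^s_{(2)}}$) on the $\mathrm H\Z/2[4j'+2]$-summand of $\mathbb{L}^q\langle 1\rangle_{(2)}$ lands in a $\mathrm H\Z_{(2)}$-summand of $\mathbb{L}^q\langle 1\rangle_{(2)}$ in even total degree $\equiv 0\pmod 4$, and the only $\mathrm H\Z/2\wedge\mathrm H\Z/2\to\mathrm H\Z_{(2)}$ map of the appropriate degree (namely of degree $-1$) is the one classifying $\delta^*$ applied to the cup product of the two $\Z/2$-classes, up to a unit. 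This is what contributes the $\delta^*\big((\nu^{\Top}_Y)^*(\mathbb V\smile v)\big)$ term in the formula for $\Phi$. I would pin down the unit by the same style of argument used in Lemma~\ref{lem:KSformula} --- i.e.\ reduce to a low-dimensional Postnikov truncation where the relevant $k$-invariant is a known Bockstein-of-$\Sq^1$ or Bockstein composite, and observe that any other coefficient would give the wrong homotopy type --- or, if one prefers, cite that the Taylor--Williams maps $l^s, r, l^q, k$ are compatible with the ring/module structure in exactly this way from \cite{MR557167}.

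The main obstacle I expect is bookkeeping: correctly tracking the degrees, the connective-cover shift $\mathbb{L}^q\langle 1\rangle$ versus $\mathbb{L}^q$, and the interplay between Atiyah duality (which involves $\nu^{\Top}_Y$, hence the $(\nu^{\Top}_Y)^*$ pullbacks) and Poincar\'e duality (which involves the $\mathrm H\Z$-fundamental class $[Y]$), so that the various ``$\frown[Y]$'' and ``Thom iso'' maps are applied on the correct sides and the signs/orientations match. A second, softer obstacle is justifying that the product structure on the split-off Eilenberg--MacLane wedge summands is literally given by cup products and the single Bockstein term with no further higher or lower correction terms --- this is where I would lean on the explicitness of the Taylor--Williams equivalences as $\mathrm H\Z$-module maps, combined with the fact that the relevant groups $[\,\mathrm H A[m]\wedge\mathrm H B[n],\ \mathrm H C[p]\,]$ of $\mathrm H\Z$-module maps are small and computed by Steenrod operations and Bocksteins. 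Once these points are settled, commutativity of the diagram is a matter of assembling the pieces and invoking Lemma~\ref{lem:LThyFundamentalClassUnderSplitting}.
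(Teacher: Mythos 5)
Your proposal is correct and follows essentially the same route as the paper: the key ingredients are Lemma~\ref{lem:LThyFundamentalClassUnderSplitting} together with the behaviour of the module structure map $m\colon\mathbb{L}^s_{(2)}\wedge\mathbb{L}^q\langle 1\rangle_{(2)}\to\mathbb{L}^q\langle 1\rangle_{(2)}$ under the Taylor--Williams splittings, for which the paper simply quotes the formulas $m^*l^q=l^s\wedge l^q+\delta^*(r\wedge k)$ and $m^*k=l^s\wedge k$ from \cite[(1.13)]{MR557167} (the second of your two suggested ways of pinning down the cross term) and notes that these determine $m$ since the target is a coproduct of Eilenberg--MacLane spectra.
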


\begin{proof}
The top square commutes by the definition of the map $\mathrm{D}$ from \eqref{eqn:defn-of-D}.

Write $m\colon\mathbb{L}^s_{(2)}\wedge \mathbb{L}^q\langle 1 \rangle_{(2)}\to \mathbb{L}^q\langle 1 \rangle_{(2)}$ for the $\mathbb{L}^s_{(2)}$-module structure map. According to~\cite[(1.13)]{MR557167}, the spectrum cohomology classes $l^q$, $k$, $l^s$ and $r$ defining the identifications~\eqref{eq:symmetric} and~\eqref{eq:quadratic} behave as follows under pullback along the map $m$:
\begin{equation*}
m^*l^q=l^s\wedge l^q+\delta^*(r\wedge k)\qquad\text{and}\qquad m^*k=l^s\wedge k.
\end{equation*}
By \eqref{eq:quadratic}, the spectrum $\mathbb{L}^q\langle 1 \rangle_{(2)}$ is homotopy equivalent to a coproduct of Eilenberg--Maclane spectra via the maps $l^q$ and $r$, so these formulae fully determine the homotopy class of the map $m$.

Using the formula for $[Y]_{\mathbb{L}^s_{(2)}}$ under the splitting \eqref{eq:symmetric} given by Lemma \ref{lem:LThyFundamentalClassUnderSplitting}, and this description of the map $m$, we may derive a formula for the cap product map
\[
[Y]_{\mathbb{L}^s_{(2)}} \frown -\colon \left[\Sigma^\infty (Y/\partial Y), \mathbb{L}^q\langle 1 \rangle_{(2)}\right] \overset{\cong}\lra \left[\bS^d, \Sigma^\infty Y_+ \wedge \mathbb{L}^q\langle 1 \rangle_{(2)}\right]
\]
under the identifications \eqref{TW-identification-one} and \eqref{TW-identification-two}. Namely, it is given by sending
\[
(u,v)\in \bigoplus_{i\geq1} H^{4i}(Y,\partial Y;\Z_{(2)})\oplus \bigoplus_{j\geq0} H^{4j+2}(Y,\partial Y;\Z/2)
\]
to
\begin{multline*}
([Y]\frown ((\nu^{\Top}_Y)^*\ell \smile u),[Y]\frown ((\nu^{\Top}_Y)^*\ell \smile v)+ [Y]\frown\delta^*( (\nu^{\Top}_Y)^*\mathbb{V})\smile v))\\
\in \bigoplus_{i\geq1}H_{d-4i}(Y;\Z_{(2)})\oplus \bigoplus_{j\geq0} H_{d-4j-2}(Y;\Z/2).
\end{multline*}
This describes the composition $TW \eqref{TW-identification-two} \circ ([Y]_{\mathbb{L}^s_{(2)}} \frown -)\circ (TW \eqref{TW-identification-one})^{-1}$ in the diagram, which agrees with the composition $([Y] \frown -)\circ \Phi$ by the definition of $\Phi$.
\end{proof}

\subsection{Proof of Theorem \ref{thm:main-technical}~\ref{it:main-technicalC}}\label{sub:proof-thm-C}

Now we specialise the discussion of the previous sections to the case at hand, with $Y = X \times I \times I$ for an oriented 4-manifold $X$. We may consider the diagram
\begin{equation}\label{eq:developed-diagram}
\begin{tikzcd}[column sep=2.5ex]
H_2(X;\Z/2) & {\left[\tfrac{X \times I \times I}{\partial (X \times I \times I)}, \G/\Top\right]_{(2)}} \arrow[d, "\mathrm{D}","\cong"'] \arrow[l, swap, "\ks"] \arrow[rr, "\sigma"] & &  L_{6}(\Z[\pi])_{(2)}
\\
& {\left[\bS^6, \Sigma^\infty X_+ \wedge \mathbb{L}^q\langle 1 \rangle\right]_{(2)}} \arrow[d, "TW","\cong"'] \rar & {\left[\bS^6, \Sigma^\infty X_+ \wedge \mathbb{L}^q\right]_{(2)}} \rar{\mft_*} & {\left[\bS^6, \Sigma^\infty \BB\pi_+ \wedge \mathbb{L}^q\right]_{(2)}} \arrow[u, "A_\pi"] \arrow[d, "TW","\cong"']\\
& \begin{tabular}{l}
$H_0(X ; \Z/2)$ \\
$\hspace{2ex}\oplus H_2(X;\Z_{(2)})$ \\
$\hspace{4ex}\oplus H_4(X;\Z/2)$
\end{tabular} \arrow[rr, "\mft_* \oplus 0"] \arrow[luu, bend left, swap, "\Psi"]& &
\begin{tabular}{l}
$H_0(\pi ; \Z/2)$ \\
$\hspace{2ex}\oplus H_2(\pi;\Z_{(2)})$ \\
$\hspace{4ex}\oplus H_4(\pi;\Z/2)$ \\
$\hspace{6ex}\oplus H_6(\pi;\Z_{(2)}).$
\end{tabular}
\end{tikzcd}
\end{equation}
We use that $X \times I \times I \simeq X$ to simplify notation here. The top rectangle comes from Subsection~\ref{subsection:surgery-obstr-and-assembly}.  For the bottom rectangle we applied $TW$ to the $\mathbb{L}^q$ homology of both $\Sigma^{\infty}X_+$, and $\Sigma^{\infty}\BB\pi_+$, using naturality with respect to the truncation map $\mft$ and the change of homology theory $\mathbb{L}^q\langle 1 \rangle \to \mathbb{L}^q$ to see that the rectangle commutes.
The map $\Psi$, defined so as to make the left-hand triangle commute, can be evaluated by applying Lemma \ref{lem:KSformula} and Lemma \ref{lem:TheTwoIsomorphisms}.

\begin{lemma}\label{lem:Psi}
We have $\Psi(a,b,c) = \mathrm{red}_2(b)$.
\end{lemma}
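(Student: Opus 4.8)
The plan is to read $\Psi$ off its defining property. By construction $\Psi$ is the unique map making the left-hand triangle of~\eqref{eq:developed-diagram} commute, i.e.\ $\Psi\circ(TW\circ\mathrm{D})$ equals the composite labelled $\ks$ in~\eqref{eq:developed-diagram}. Since $TW$ and $\mathrm{D}$ are isomorphisms, it suffices to fix a class $\xi$ in $\left[\tfrac{X\times I\times I}{\partial(X\times I\times I)},\G/\Top\right]_{(2)}\cong\left[\Sigma^\infty(Y/\partial Y),\mathbb{L}^q\langle 1 \rangle_{(2)}\right]$, where $Y:=X\times I\times I$, then to compute both $(a,b,c):=TW\circ\mathrm{D}(\xi)$ and the image of $\xi$ under $\ks$, and to check that the latter equals $\mathrm{red}_2(b)$.

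To compute $(a,b,c)$ I would push $\xi$ through \cref{lem:TheTwoIsomorphisms}: this rewrites $TW\circ\mathrm{D}$ as $([Y]\frown-)\circ\Phi$ precomposed with the cohomological Taylor--Williams identification~\eqref{TW-identification-one}. That identification sends $\xi$ to its Taylor--Williams components $l^q_{4i}(\xi)$ and $k_{4j+2}(\xi)$, and since $\dim Y=6$ only $l^q_4(\xi)\in H^4(Y,\partial Y;\Z_{(2)})$, $k_2(\xi)\in H^2(Y,\partial Y;\Z/2)$ and $k_6(\xi)\in H^6(Y,\partial Y;\Z/2)$ can be nonzero. Reading off the formula for $\Phi$, its $\bigoplus_{i\ge1}H^{4i}$-component is $(\nu^{\Top}_Y)^*\ell\smile l^q_4(\xi)$; as $\ell_j$ sits in degree $4j$, every term other than the degree-$4$ term $l^q_4(\xi)$ exceeds $\dim Y$ and hence vanishes. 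Capping with $[Y]$ and using $Y\simeq X$, the $H_2(X;\Z_{(2)})$-coordinate is $b=[Y]\frown l^q_4(\xi)$. The coordinates $a$ and $c$ come from the second component of $\Phi$ (they involve $\mathbb{V}$ and a Bockstein) and will play no role.

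For the other composite I would quote \cref{lem:KSformula}: the $\ks$-arrow sends $\xi$ to $[Y]_{\Z/2}\frown\bigl(\mathrm{red}_2(l^q_4(\xi))+k_2(\xi)\smile k_2(\xi)\bigr)$. The one genuinely non-formal input is that $Y/\partial Y=(X\times I\times I)/\partial(X\times I\times I)$ is canonically homeomorphic to the double suspension $\Sigma^2(X/\partial X)$, hence is a co-$H$-space, so all cup products of positive-degree reduced cohomology classes vanish; in particular $k_2(\xi)\smile k_2(\xi)=0$. What remains is $[Y]_{\Z/2}\frown\mathrm{red}_2(l^q_4(\xi))$, which by naturality of the cap product under the coefficient reduction $\Z_{(2)}\to\Z/2$ — together with the fact that $[Y]_{\Z/2}$ is the reduction of $[Y]_{\Z_{(2)}}$ — equals $\mathrm{red}_2\bigl([Y]\frown l^q_4(\xi)\bigr)=\mathrm{red}_2(b)$. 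Comparing with the previous paragraph gives $\Psi(a,b,c)=\mathrm{red}_2(b)$, as claimed.

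I expect the only real obstacle to be bookkeeping rather than any idea: one must match the degree shifts in the two Taylor--Williams splittings against $\dim Y=6$ so that exactly one summand feeds into $b$, and one must recognise $Y/\partial Y$ as a double suspension in order to discard the $k_2\smile k_2$ term (this is the one point that genuinely uses that $Y$ is a product with two interval factors). The compatibility of the several Poincar\'e duality isomorphisms and fundamental classes across the coefficient change $\Z_{(2)}\to\Z/2$ ought to be routine, but deserves to be stated with care.
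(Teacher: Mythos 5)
Your proposal is correct and is essentially the paper's own argument: both rest on \cref{lem:KSformula} for the formula $\mathrm{red}_2(l^q_4)+k_2\smile k_2$, on \cref{lem:TheTwoIsomorphisms} plus degree considerations to see that only the degree-$4$ integral component survives, and on the fact that $\tfrac{X\times I\times I}{\partial}\simeq\Sigma^2(X/\partial X)$ kills $k_2\smile k_2$. The only difference is cosmetic — you run the computation forwards from a class $\xi$ upstream, whereas the paper starts from $(a,b,c)$ and applies $\Phi^{-1}\circ([Y]\frown-)^{-1}$ — which changes nothing since all the maps involved are isomorphisms.
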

\begin{proof}
We consider the diagram of Lemma \ref{lem:TheTwoIsomorphisms} for $Y=X \times I \times I$. This has
\[(\nu^{\Top}_Y)^*\ell = 1 + (\nu^{\Top}_Y)^*\ell_1 \in H^0(Y,\partial Y;\Z_{(2)}) \oplus H^4(Y,\partial Y;\Z_{(2)})\]
and
\[(\nu^{\Top}_Y)^*(\mathbb{V}) = (\nu^{\Top}_Y)^*(\sum_{j \geq 0} V_{2j} \smile \mathrm{Sq}^1(V_{2j})) = (\nu^{\Top}_Y)^* (V_2 \smile \mathrm{Sq}^1(V_{2})) \in H^5(Y,\partial Y;\Z/2)\]
 for degree reasons, namely that $\Sq^1(V_0)=0$ and $H^k(Y,\partial Y;\Z_{(2)})=0$ for $k >6$. Thus the isomorphism
\begin{multline*}
\Phi\colon  H^2(Y, \partial Y ; \Z/2) \oplus H^4(Y, \partial Y ; \Z_{(2)}) \oplus H^6(Y, \partial Y ; \Z/2)\\
 \lra H^2(Y, \partial Y ; \Z/2) \oplus H^4(Y, \partial Y ; \Z_{(2)}) \oplus H^6(Y, \partial Y ; \Z/2)
\end{multline*}
is given by $(u,v,w) \mapsto (u,v, w+(\nu^{\Top}_Y)^* \ell_1 \smile u)$.

Start with a class
\[(a,b,c) = ([Y] \frown z, [Y] \frown y, [Y] \frown x) \in H_0(Y ; \Z/2) \oplus H_2(Y;\Z_{(2)})\oplus H_4(Y;\Z/2)\]
in the bottom-left corner of the diagram~\eqref{eq:developed-diagram}, which equals the bottom-right corner of the diagram of Lemma \ref{lem:TheTwoIsomorphisms}  (in this case we take just $i=1$ and $j=0,1$). Here $(x,y,z)$ in the codomain of~$\Phi$ are uniquely determined by Poincar\'{e} duality.
We wish to pass from here anticlockwise around the diagram from Lemma \ref{lem:TheTwoIsomorphisms} to the domain of $\Phi$, from which we can evaluate the map $\ks$ using Lemma \ref{lem:KSformula}.
By Lemma \ref{lem:TheTwoIsomorphisms}, we can instead pass clockwise around the diagram from that lemma, i.e.\ apply the isomorphism $\Phi^{-1} \,{\circ}\, ([Y] \frown -)^{-1}$.
We obtain the class
$$(x, y, z-(\nu^{\Top}_Y)^*\ell_1 \smile x) \in H^2(Y, \partial Y ; \Z/2) \oplus H^4(Y, \partial Y ; \Z_{(2)}) \oplus H^6(Y, \partial Y ; \Z/2).$$
By Lemma \ref{lem:KSformula} the Kirby--Siebenmann class $\ks$ is given by $\mathrm{red}_2(l_4^q) + k_2 \smile k_2$, so is $\mathrm{red}_2(y) + x \smile x \in H^4(Y, \partial Y ; \Z/2)$. But as
$Y/\partial Y \cong (X/\partial X) \wedge S^2$
is a reduced suspension, the cup product $x \smile x$ vanishes. Thus under Poincar{\'e} duality the Kirby--Siebenmann class is $[Y] \frown \mathrm{red}_2(y) = \mathrm{red}_2([Y] \frown y) = \mathrm{red}_2(b)$, so $\Psi$ has the claimed description.
\end{proof}

We can now describe the map $I_2$ to which the statement of
Theorem~\ref{thm:main-guarantee} and Theorem \ref{thm:main-technical}~\ref{it:main-technicalC}  refers, and then complete the argument.

\begin{definition}\label{defn:I2}
The map $I_2\colon H_2(\pi ;\Z_{(2)}) \to L_{6}^q(\Z[\pi])_{(2)}$ is by definition the restriction of the right-hand column $A_\pi \circ TW^{-1}$ of the
diagram at the start of Section~\ref{sub:proof-thm-C} to the summand~$H_2(\pi ;\Z_{(2)})$.
\end{definition}

\begin{proof}[Proof of Theorem~\ref{thm:main-technical}~\ref{it:main-technicalC}]
By the strategy described at the end of Section~\ref{sec:GeomSurgery}, after diagram~\eqref{eq:diagram-pf-strategy-for-23C}, it suffices to produce, for each $u \in \ker \big(\delta_* \colon H_2(X ; \Z/2) \to H_1(X ; \Z_{(2)})\big)$, a lift $\xi_u \in {\big[\tfrac{X \times I \times I}{\partial (X \times I \times I)}, \G/\Top\big]_{(2)}}$ along $\ks$ such that $\sigma(\xi_u)=0$. We do so as follows. Given such a class $u$,
exactness of the Bockstein sequence
$$H_2(X ; \Z_{(2)}) \overset{\mathrm{red}_2}\lra H_2(X ; \Z/2) \overset{\delta_*}\lra H_1(X ; \Z_{(2)}) \overset{2}\lra H_1(X ; \Z_{(2)})$$
shows that we may choose a $u' \in H_2(X ; \Z_{(2)})$ which reduces modulo 2 to $u$. We then consider the element
$$(0, u', 0) \in H_0(X ; \Z/2) \oplus H_2(X;\Z_{(2)}) \oplus H_4(X;\Z/2)$$
in the bottom-left corner of the previous diagram~\eqref{eq:developed-diagram}. Then $(0,u',0)$ corresponds under the vertical isomorphism $\mathrm{D}^{-1} \circ TW^{-1}$ to an element
\[
\xi_u \in {\left[\tfrac{X \times I \times I}{\partial (X \times I \times I)}, \G/\Top\right]_{(2)}}
\]
which satisfies that $\ks(\xi_u)=u$, by the description of $\Psi$ in Lemma \ref{lem:Psi}. Under our assumption that the map $I_2\colon H_2(\pi ;\Z_{(2)}) \to L_{6}(\Z[\pi])_{(2)}$ is zero, the element $(0, u', 0)$ maps to zero in $L_{6}(\Z[\pi])_{(2)}$ by going anticlockwise around the diagram~\eqref{eq:developed-diagram}, and therefore $\sigma(\xi_u)=0$ as required. This completes the proof of Theorem \ref{thm:main-technical}~\ref{it:main-technicalC} and hence of Theorem~\ref{thm:main-guarantee}.
\end{proof}

\section{Proof of Theorem \ref{thm:main-realise}}\label{sec:proof-realisation-thm}

We recall that Theorem \ref{thm:main-realise} states the following.
For each smooth, connected, compact $4$-manifold~$X$ there is a $g \geq 0$ such that every class $x \in H_2(X \# g(S^2 \times S^2) ;\Z/2)$ arises as $\KS(F)$ for some topological pseudo-isotopy $F$ from the identity to some diffeomorphism $f\colon X \# g(S^2 \times S^2) \to X \# g(S^2 \times S^2)$.

\begin{proof}[Proof of Theorem \ref{thm:main-realise}]
Without loss of generality we may suppose that $X$ has nonempty boundary, by removing an open disc.

Given a class $x \in H_2(X;\Z/2)$, we use Poincar{\'e} duality and a suspension isomorphism to consider it as an element of
\[
H^3(X \times I, \partial ; \Z/2) = \left[\tfrac{X \times I}{\partial}, \Top/\OO\right],
\]
using the fact that the Postnikov 6-type of $\Top/\OO$ is a $K(\Z/2,3)$. Thus $x$ gives a smooth structure $(X \times I)_{\Sigma(x)}$ on the 5-manifold $X \times I$ relative to the standard smoothing of the boundary, by the discussion in Section \ref{sec:smoothing}. As $X = X \times \{0\} \hookrightarrow X \times I$ is a simple homotopy equivalence, the smooth $5$-manifold $(X \times I)_{\Sigma(x)}$ is an $s$-cobordism. By the $(S^2 \times S^2)$-stable $s$-cobordism theorem~\cite{quinn-1983}, there is a $g \geq 0$ such that the stabilisation of the $s$-cobordism $(X \times I)_{\Sigma(x)}$ obtained by gluing on the trivial smooth cobordism on $(\partial X \times I) \# g(S^2 \times S^2)$ is a trivial cobordism starting at \[X' := X \cup_{\partial X}\left ((\partial X \times I) \# g(S^2 \times S^2) \right) \cong X \# g(S^2 \times S^2).\]
This yields maps
\[
\begin{tikzcd}
 X' \times I \ar[r,"{\cong_{\Top}}"] \ar[r] & (X' \times I)_{\Sigma(x)} \ar[r,"{\cong_{\Diff}}"]  & X' \times I.
\end{tikzcd}
\]

The first is just the identity homeomorphism between two copies of $X' \times I$, while the second map is the diffeomorphism arising from the $(S^2 \times S^2)$-stable $s$-cobordism theorem.

This yields a self-homeomorphism of $X' \times I$ that is the identity on $X' \times \{0\}$ and that restricts on $X' \times \{1\}$ to a diffeomorphism $f$. This is the same as a topological pseudo-isotopy $F$ from the identity to $f$. By construction, this data has
$$\KS(F) = (x, 0) \in  H_2(X;\Z/2) \oplus H_2(g(S^2 \times S^2) ;\Z/2) = H_2(X';\Z/2).$$
By choosing $g$ large enough, since $H_2(X;\Z/2)$ is finite we may arrange that all of $H_2(X;\Z/2) \subseteq H_2(X',\Z/2)$ lies in the image of $\KS\colon Q(X') \to H_2(X';\Z/2)$.

For the summand $H_2(g(S^2 \times S^2) ;\Z/2) \subseteq H_2(X';\Z/2)$, we use that $W_{g,1} := g(S^2 \times S^2) \setminus \operatorname{Int}(D^4)$ is simply-connected and so the hypotheses of Theorem~\ref{thm:main-guarantee} are satisfied for this manifold (alternatively, the conclusion of Theorem~\ref{thm:main-guarantee} for simply connected manifolds was already a consequence of~\cite[Theorem~1]{Kreck-isotopy-classes}). It follows that
\[
\pi_1(\pseudoHomeo^+_\partial(W_{g,1})) \overset{\alpha}\lra Q(W_{g,1}) \overset{\ks}\lra H_2(W_{g,1};\Z/2)
\]
is surjective. Plugging such topological pseudo-isotopies of $W_{g,1}$ into $X'$ shows that $H_2(g(S^2 \times S^2) ;\Z/2) \subseteq H_2(X';\Z/2)$ also lies in the image of $\KS$ for the manifold $X'$. Since $\KS$ is a homomorphism by Lemma~\ref{lem:ksConcordanceInvariant}, this completes the proof of Theorem \ref{thm:main-realise}.
\end{proof}

\section{Proof of Theorem \ref{thm:main-example}}\label{sec:proof-thm-main-example}

We begin the proof of Theorem~\ref{thm:main-example} by
defining the $4$-manifold $X$ appearing in the statement of Theorem~\ref{thm:main-example}.
Then we outline the strategy of the proof, motivating the conditions the manifold~$X$ will have  been constructed to satisfy.
The strategy
involves choosing judicious maps into $\PL$ normal invariant sets, and the commutativity of a diagram. The rest of this section, and the rest of the proof of Theorem~\ref{thm:main-example}, will comprise the proof that this diagram commutes.

\subsection{The $4$-manifold \texorpdfstring{$X$}{X}}\label{sub:the-manifold-X}

Let $A$ be a closed smooth $4$-manifold, and consider the following list of conditions.
\begin{enumerate}[label=(\roman*)]
\item\label{it:Aspherical} $A$ is aspherical,
\item\label{it:Fram} $A$ is stably framable,
\item\label{it:Centre} the map $\pi_1(\Diffeo^+(A)) \to \pi_1({\operatorname{hAut}\mkern 0mu}^+(A))$ is surjective,
\item\label{it:Tors} $H_1(\pi_1(A); \Z)$ has an element of order 2.
\end{enumerate}

\begin{remark}
We are interested in the higher homotopy groups of various automorphism groups based at the identity. As homotopy automorphisms which are homotopic to the identity are simple,  we may freely interchange ${\operatorname{hAut}\mkern 0mu}^+$ and ${\operatorname{sAut}\mkern 0mu}^+$, and similarly for $\hAut$ and $\sAut$. By the same token homeomorphisms in the identity component preserve orientation, so we may also freely interchange $\wt{\Homeo}\mkern 0mu$ and ${\wt{\Homeo}\mkern 0mu^+}$, and so on.
\end{remark}

\begin{remark}\label{rem:asphericalcentre}
For an aspherical space $A$, there is an isomorphism $\pi_1({\operatorname{hAut}\mkern 0mu}(A))\cong Z(\pi_1(A))$, where the latter denotes the centre~\cite[Corollary~I.13]{MR189027}. This can be argued by picking a point $x\in A$ and considering the map $\operatorname{ev}_x\colon {\operatorname{hAut}\mkern 0mu}(A)\to A$, given by evaluating the automorphism at $x$. The homotopy fibre of this map is the space of based homotopy equivalences ${\operatorname{hAut}\mkern 0mu}_x(A)$. As $A$ is aspherical, we have $\pi_0({\operatorname{hAut}\mkern 0mu}_x(A))\cong\Aut(\pi_1(A,x))$ and $\pi_k({\operatorname{hAut}\mkern 0mu}_x(A))=0$ for all $k\geq 0$. The long exact sequence of the fibration thus has a portion
\[
0\lra \pi_1({\operatorname{hAut}\mkern 0mu}(A), \id)\lra \pi_1(A, x)\lra\pi_0({\operatorname{hAut}\mkern 0mu}_x(A))\cong\Aut(\pi_1(A,x)).
\]
An element $g\in\pi_1(A,x)$ is sent to the map $\gamma\mapsto g\gamma g^{-1}\in \Aut(\pi_1(A,x))$ in this sequence, and thus $\pi_1(A)\to\pi_0({\operatorname{hAut}\mkern 0mu}_x(A))$ has kernel $Z(\pi_1(A))$. For further details we refer to~\cite{MR189027}.
\end{remark}

The next example shows that there exist many $4$-manifolds $A$ satisfying these conditions.

\begin{example}\label{ex:hyperbolic}
Let $\Sigma$ be a closed, oriented hyperbolic 3-manifold with 2-torsion in $H_1(\Sigma;\Z)$. Such manifolds $\Sigma$ are abundant; for example, take any hyperbolic knot $K\subseteq S^3$ and perform $n$-surgery on $S^3$ along $K$, where $n$ is even and $n/1$ is not one of the finite number of \emph{exceptional} slopes $p/q$ for $S^3\sm\nu K$ which produce a non-hyperbolic filling. A concrete example is given by performing~$6$-surgery on the figure eight knot~\cite[Theorem~4.7]{zbMATH01014124}.

The manifold $A := \Sigma \times S^1$ satisfies conditions \ref{it:Aspherical}, \ref{it:Fram}, \ref{it:Centre}, and \ref{it:Tors}. Indeed, condition \ref{it:Aspherical} holds because~$\Sigma$ is hyperbolic, and thus aspherical, so $\Sigma\times S^1$ is a product of aspherical spaces, thus also aspherical. Condition \ref{it:Fram} holds as oriented 3-manifolds are framable, and so $A$ is a product of framable manifolds. For condition~\ref{it:Centre} we will argue that $\pi_1(\Sigma)$ has trivial centre, so that the centre of $\pi_1(A)\cong\pi_1(\Sigma)\times\pi_1(S^1)$ is $\Z$ generated by the circle factor. But the circle action on $\Sigma \times S^1$ is smooth and generates this factor in $\pi_1({\operatorname{hAut}\mkern 0mu}(A))\cong Z(\pi_1(A))$ (recall Remark~\ref{rem:asphericalcentre}), showing that~\ref{it:Centre} is satisfied.
Condition~\ref{it:Tors} holds as $\Sigma$ was chosen to have $2$-torsion in $H_1(\Sigma;\Z)$.

It remains to see that the centre of the group $\pi_1(\Sigma)$ is trivial. For a contradiction, let $g$ be a nontrivial element of the centre $Z(\pi_1(\Sigma))$. Since $\pi_1(\Sigma)$ is torsion-free the subgroup $\langle g \rangle$ is isomorphic to $\Z$. By \cite[Corollary~III.$\Gamma$.3.10~(2),~p.~462]{Bridson-Haefliger}, $\langle g \rangle$ has finite index in its centraliser~$C_{\pi_1(\Sigma)}(g)$. Since $g$ is central, $C_{\pi_1(\Sigma)}(g) = \pi_1(\Sigma)$, so $\pi_1(\Sigma)$ has a subgroup $\Z$ of finite index. The corresponding finite cover $\wt{\Sigma}$ is a closed, orientable, aspherical 3-manifold and so $\Z \cong H_3(\wt{\Sigma};\Z) \cong H_3(\BB \pi_1(\wt{\Sigma});\Z) \cong H_3(\BB \Z;\Z) =0$, which is a contradiction.
\end{example}

\begin{definition}\label{def:XandA}
Choose a closed smooth $4$-manifold $A$ satisfying conditions \ref{it:Aspherical}, \ref{it:Fram}, \ref{it:Centre}, and~\ref{it:Tors}, above. Define a closed smooth $4$-manifold
\[
X := A \# g(S^2 \times S^2)
\]
with $g$ is chosen large enough that $\ks\colon Q(X) \to H_2(X;\Z/2)$
is an isomorphism; such a $g$ exists by Theorem \ref{thm:main-realise}.
Note that $X$ inherits properties~\ref{it:Fram} and~\ref{it:Tors} from $A$.
\end{definition}

In this case the Postnikov truncation map
$$\mft \colon X \lra \BB\pi \simeq A$$
may be modelled as the map which collapses $g(S^2\times S^2)\sm\operatorname{Int}(D^4)) \subseteq X$, so it has degree $\pm 1$.

\subsection{Outline of proof}
Let $X$ and $A$ be instances of the closed, oriented, connected $4$-manifolds from Section~\ref{sub:the-manifold-X}.
We consider the following diagram, whose terms will be defined below.
After that, we will prove the theorem assuming commutativity. Then the majority of the rest of this section will be devoted to proving that it commutes.

\begin{equation}\label{diagram:master-diagram}
\begin{tikzcd}[column sep=1em]
\pi_1(\pseudoHomeo^+(X))\ar[dd] \ar[r, "\alpha"]  & Q(X) \ar[r, "\beta"] \ar[d,"\cong", "\Sm"']
\arrow[ddrr,
"\KS","\cong"',
rounded corners,
to path={ -- ([xshift=4ex, yshift=-4ex]\tikztostart.east)
-- ([yshift=11.6ex]\tikztotarget.north)
-- (\tikztotarget)
\tikztonodes}]
& \pi_0(\pseudoDiffeo^+(X)) \arrow[d, phantom, ""{coordinate, name=Z}] \ar[r, "\gamma"] & \pi_0(\pseudoHomeo^+(X))
\arrow[dd, phantom, ""{coordinate, name=Y}]
\\
\arrow[r, phantom, "\eqref{diagram:Lshaped}"]& {[\tfrac{X \times I \times I}{\partial}, \BB(\Top/\OO)]} \ar[r, "\cong"] & {[X_+, \Omega(\Top/\OO)]} \ar[d,"\cong"] & \\
\pi_1(\hAut^+(X)) \arrow[dr, phantom, "\eqref{eq:etaUmkehr}"]\ar[r,"\eta"] \ar[d,"\mft_*"] & {[X_+, \Omega(\G/\PL)]} \ar[d,"\mft_!"]  & {[X_+, \Omega(\Top/\PL)]} \ar[l] \ar[d,"\mft_!"] \ar[r,"\cong", "PD"'] & H_2(X;\Z/2)  \ar[d,twoheadrightarrow,"\mft_*"]    \\
\pi_1(\hAut^+(A)) \ar[r,"\eta"] &  {[A_+, \Omega(\G/\PL)]} \ar[drr,hookleftarrow,"j", end anchor={[yshift=-2ex]}] & {[A_+, \Omega(\Top/\PL)]} \ar[l] \ar[r,"\cong", "PD"'] & H_2(A;\Z/2) \ar[d,twoheadrightarrow,"\delta_*"]\\
\pi_1(\Diffeo^+(A)) \ar[u,twoheadrightarrow] \ar[ur,"0"'] & & & { \ker\Big(
\begin{array}{c}H_1(A;\Z)\xrightarrow{\cdot 2} \\ \,\,\,\,H_1(A;\Z)\end{array} \Big) }
\end{tikzcd}
\end{equation}

\begin{itemize}[leftmargin=*]
\item The top row is the exact sequence of groups from \eqref{eqn:key-long-exact-seq}.
\item The diagram consists of groups and homomorphisms, and in fact all but possibly $\pi_0(\pseudoDiffeo^+(X))$ and $\pi_0(\pseudoHomeo^+(X))$ are abelian groups.
For this we consider the spaces $\BO$, $\BPL$,~$\BTop$,~$\BG$ as infinite loop spaces via Whitney sum~\cite{Boardman-Vogt}. The spaces $\Top/\OO$, $\Top/\PL$ and $\G/\PL$ inherit compatible infinite loop space structures coming from taking fibres of the infinite loop maps~$\BO\to \BB\PL \to \BB\Top \to \BB\G$.
\item The upper map $\eta$ is defined by the composition
\[
\pi_1(\hAut^+(X))\cong \pi_0(\hAut^+_\partial(X\times I))\lra \mathcal{S}^{\PL}_\partial(X\times I)\xrightarrow{\eta_{\PL}} [\tfrac{X\times I}{\partial},\G/\PL]\cong[X_+, \Omega(\G/\PL)],
\]
where $\eta_{\PL}$ is the $\PL$ normal invariant. The lower map $\eta$ is defined similarly, with $A$ in place of~$X$. Using the $\PL$ surgery sequence for manifolds of the form $Y \times I$ justifies that these normal invariant groups are abelian and that $\eta$ is indeed a homomorphism.
\item We use that $\Top/\PL \simeq K(\Z/2,3)$ to obtain $\Omega(\Top/\PL) \simeq K(\Z/2,2)$. Thus we identify $[Y_+,\Omega(\Top/\PL)] \cong H^2(Y;\Z/2)$ for $Y \in \{X,A\}$. With respect to this identification the maps~$PD$ correspond to ordinary Poincar\'{e} duality.
\item By our choice of $g$ in the definition of $X$, and Theorem~\ref{thm:main-realise}, the maps $\KS$ and  $\Sm$ are surjective. They are injective (without need to stabilise) by Theorem~\ref{thm:main-invariant}, and hence are isomorphisms.
\item The right-most map $\mft_*$ is induced by the Postnikov truncation $\mft\colon X \to \BB\pi \simeq A$. This is easily seen to be surjective from the fact that there exists a model for $\BB\pi$ arising from taking a cell complex for $X$ and then adding cells of dimension at least three.
\item By functoriality of Postnikov truncation, every homotopy automorphism of $X$ induces a homotopy automorphism of its truncation $\BB\pi \simeq A$, yielding the map $\mft_* \colon \pi_1(\hAut(X)) \to \pi_1(\hAut(A))$ on the left.
\item The map $\delta_*$ is the homology Bockstein for the coefficient sequence $0\to \Z\overset{\cdot 2}\to\Z\to\Z/2\to 0$.  The fact that $\delta_*$ is surjective follows from the Bockstein long exact sequence.

\item The isomorphism in the second row is given by the composition
\[
\big[\tfrac{X \times I \times I}{\partial}, \BB(\Top/\OO)\big] \cong [\Sigma^2 X_+,\BB(\Top/\OO)]  \cong [X_+,\Omega^2\BB(\Top/\OO)] \cong  [X_+, \Omega(\Top/\OO)].\]

\item In~\cref{lem:bockstein} below, the map $j$ in the diagram will be defined and proven to be injective, and the lower right triangle will be proven to commute.

\item We discuss the bottom left triangle. Recall from Section~\ref{sec:block} that the geometric realisation of $\hAut^+(A)$ is homotopy equivalent to $\mathrm{hAut}^+(A)$. Combining with property \ref{it:Centre} of the manifold~$A$, this implies the map~$\pi_1(\Diffeo^+(A))\twoheadrightarrow \pi_1(\hAut^+(A))$ is surjective. An element in the image of the up-right composition in the bottom left triangle is the $\PL$ normal invariant of a diffeomorphism of $A\times I$, and is thus trivial, showing that the bottom left triangle commutes.

\item The region involving the map $\KS$ commutes by definition of the map $\KS$.

\item The maps $\mft_!$ are defined as umkehr maps using Poincar\'{e} duality. This exploits the fact that stably framed manifolds are $\bS$-oriented and thus satisfy Poincar\'{e} duality in $E$-theory for any spectrum $E$. This is discussed in detail in Section~\ref{sec:stabfrm}, below. Commutativity of the central  square involving the maps $\mft_!$, and the right-hand square involving $\mft_!$ and $\mft_*$, is  discussed in Section~\ref{sec:stabfrm}, the latter in Lemma~\ref{lem:righthandsquare}.

\item The L-shaped region labelled \eqref{diagram:Lshaped} will be shown to commute in Section~\ref{sub:mapping-into-PL-normal-invariants}.

\item The square labelled \eqref{eq:etaUmkehr} will be shown to commute in Section~\ref{sub:functoriality-PL-normal-wrt-Post-truncation}. This will require several pages and takes up the majority of the rest of the proof.
\end{itemize}

\begin{lemma}\label{lem:bockstein}
For any closed $4$-manifold $Y$, the natural map $[Y_+,\Omega(\Top/\PL)]\to [Y_+,\Omega(\G/\PL)]$ factors as
\[
{[Y_+, \Omega (\Top/\PL)]} \xrightarrow{PD,\cong} H_2(Y;\Z/2) \overset{\delta_*}\twoheadrightarrow \ker\big(H_1(Y;\Z)\xrightarrow{\cdot 2}H_1(Y;\Z)\big) \overset{j}{\hookrightarrow} {[Y_+, \Omega (\G/\PL)]}.
\]
where as above $\delta_*$ denotes the homology Bockstein for the sequence $0\to\Z \overset{\cdot 2}\to \Z\to\Z/2\to0$.
\end{lemma}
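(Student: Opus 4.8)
Throughout write $[Y_+,\Omega W]=[\Sigma Y_+,W]$ via the suspension adjunction. The natural map in the statement is $i_*\colon[Y_+,\Omega(\Top/\PL)]\to[Y_+,\Omega(\G/\PL)]$, where $i\colon\Top/\PL\to\G/\PL$ is the infinite loop map realising $\Top/\PL$ as the homotopy fibre of $p\colon\G/\PL\to\G/\Top$ (equivalently the natural map $\mathrm{fib}(\BPL\to\BTop)\to\mathrm{fib}(\BPL\to\BG)$). Using $\Top/\PL\simeq K(\Z/2,3)$, the source is $[\Sigma Y_+,K(\Z/2,3)]=\widetilde H^3(\Sigma Y_+;\Z/2)\cong H^2(Y;\Z/2)\cong_{PD}H_2(Y;\Z/2)$, which is the first map of the claimed factorisation. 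The plan is to compute $\ker(i_*)$ directly from the fibre sequence $\Top/\PL\to\G/\PL\to\G/\Top$: once this kernel is identified with $\ker\delta_*$, the map $j$ is automatically injective, being $i_*$ modulo its kernel, and the surjectivity of $\delta_*$ onto the $2$-torsion needs no separate argument either.

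First I would identify the classifying map. The fibration $\Top/\PL\to\G/\PL\to\G/\Top$ is the base change along $\G/\Top\to\BTop$ of $\Top/\PL\to\BPL\to\BTop$, hence is classified by the composite $c\colon\G/\Top\to\BTop\to\BB(\Top/\OO)\xrightarrow{\ks}K(\Z/2,4)$ appearing in Lemma~\ref{lem:KSformula} (here one uses that $\BB(\Top/\PL)\simeq K(\Z/2,4)$ and that $\Top/\OO\to\Top/\PL$ is $7$-connected, so the natural map $\BB(\Top/\OO)\to\BB(\Top/\PL)\simeq K(\Z/2,4)$ is the $8$-connected Kirby--Siebenmann map $\ks$). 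Consequently there is a fibre sequence of infinite loop spaces $\Omega(\G/\Top)\xrightarrow{\pm\Omega c}\Top/\PL\xrightarrow{i}\G/\PL\xrightarrow{p}\G/\Top$; applying $[\Sigma Y_+,-]$ gives an exact sequence of abelian groups in which $\ker(i_*)=\operatorname{im}\big((\Omega c)_*\big)$, and under the suspension adjunction this image is $\operatorname{im}\big(c_*\colon[\Sigma^2Y_+,\G/\Top]\to[\Sigma^2Y_+,K(\Z/2,4)]\big)$.

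Next I would evaluate this image using Lemma~\ref{lem:KSformula}: under the Taylor--Williams splitting, $c$ is $\mathrm{red}_2(l^q_4)+k_2\smile k_2$. Since $\Sigma^2Y_+$ is a double suspension, all cup products of positive-degree classes on it vanish, so on $[\Sigma^2Y_+,\G/\Top]$ the map $c_*$ is just $\mathrm{red}_2$ applied to the $l^q_4$-component, and that component ranges over all of $H^4(\Sigma^2Y_+;\Z_{(2)})$. Hence $\operatorname{im}(c_*)=\mathrm{red}_2\big(H^4(\Sigma^2Y_+;\Z)\big)=\ker\big(\delta\colon H^4(\Sigma^2Y_+;\Z/2)\to H^5(\Sigma^2Y_+;\Z)\big)$ by the integral Bockstein sequence of $0\to\Z\xrightarrow{2}\Z\to\Z/2\to 0$. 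Transporting along the (Bockstein-compatible) double suspension isomorphism identifies $\ker(i_*)$ with $\ker\big(\delta\colon H^2(Y;\Z/2)\to H^3(Y;\Z)\big)$ inside $H^2(Y;\Z/2)$.

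To finish, $i_*$ factors as $H^2(Y;\Z/2)\twoheadrightarrow H^2(Y;\Z/2)/\ker\delta\cong\operatorname{im}(\delta)\hookrightarrow[\Sigma Y_+,\G/\PL]$ with the last arrow injective, and $\operatorname{im}(\delta)=\ker\big(H^3(Y;\Z)\xrightarrow{2}H^3(Y;\Z)\big)$ by the Bockstein sequence. Transporting through Poincaré duality for $Y$, which identifies $H^2(Y;\Z/2)\cong H_2(Y;\Z/2)$ and $H^3(Y;\Z)\cong H_1(Y;\Z)$ and — since $[Y]$ admits an integral lift, so capping with it commutes with Bocksteins — intertwines the cohomology Bockstein $\delta$ with the homology Bockstein $\delta_*$, yields exactly the factorisation $[Y_+,\Omega(\Top/\PL)]\xrightarrow{PD}H_2(Y;\Z/2)\xrightarrow{\delta_*}\ker(H_1(Y;\Z)\xrightarrow{2}H_1(Y;\Z))\xrightarrow{j}[Y_+,\Omega(\G/\PL)]$, with $j$ injective; surjectivity of $\delta_*$ onto the $2$-torsion is the corresponding exactness in the homology Bockstein sequence. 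I expect the only real work to be bookkeeping — pinning down that the classifying map $c$ is literally the Kirby--Siebenmann map of Lemma~\ref{lem:KSformula} (the base-change and $7$-connectivity arguments), and verifying the compatibility of Poincaré duality with the two Bocksteins — with no deep input beyond Lemma~\ref{lem:KSformula} itself.
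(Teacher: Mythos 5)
Your proposal is correct, and it reaches the key factorisation by a genuinely different route from the paper. The paper simply cites Kirby--Siebenmann (Annex~C, Theorem~15.1) for the statement that $[\Sigma Y_+,\Top/\PL]\to[\Sigma Y_+,\G/\PL]$ factors as $H^3(\Sigma Y_+;\Z/2)\twoheadrightarrow H^3(\Sigma Y_+;\Z/2)/H^3(\Sigma Y_+;\Z)\hookrightarrow[\Sigma Y_+,\G/\PL]$, and then performs exactly the same Bockstein and Poincar\'e-duality bookkeeping that you do at the end. You instead re-derive that input internally: you identify $\ker(i_*)$ with the image of the connecting map $[\Sigma^2Y_+,\G/\Top]\to[\Sigma Y_+,\Top/\PL]$ in the fibre sequence classified by the composite $\G/\Top\to\BTop\to\BB(\Top/\OO)\xrightarrow{\ks}K(\Z/2,4)$, and then evaluate that image via Lemma~\ref{lem:KSformula}, using that the $k_2\smile k_2$ term dies on the double suspension $\Sigma^2Y_+$ so only $\mathrm{red}_2(l^q_4)$ survives. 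This is sound: the identification of the classifying map (via the homotopy-cartesian square over $\BTop$ and the $7$-connectivity of $\Top/\OO\to\Top/\PL$) and the reduction from $\Z_{(2)}$- to $\Z$-coefficients ($\mathrm{red}_2$ has the same image from $H^4(-;\Z_{(2)})$ as from $H^4(-;\Z)$, and localisation does not change the image of a map into a $\Z/2$-vector space) each need a line, but neither is a gap. What your route buys is self-containedness and an explanation of \emph{why} the kernel is the kernel of the Bockstein --- it is visibly the image of the $\G/\Top$-normal invariants under the Kirby--Siebenmann class, computed from the Taylor--Williams splitting; what the paper's route buys is brevity, at the cost of outsourcing the essential content to the Kirby--Siebenmann reference.
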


\begin{proof}
The natural map $[\Sigma Y_+, \Top/\PL]\to [\Sigma Y_+, \G/\PL]$ factors as
\[
{[\Sigma Y_+, \Top/\PL]} \cong H^3(\Sigma Y_+;\Z/2) \twoheadrightarrow H^3(\Sigma Y_+;\Z/2)/H^3(\Sigma Y_+;\Z) \hookrightarrow {[Y_+, \Omega (\G/\PL)]},
\]
by~\cite[Annex~C, Theorem~15.1]{Kirby-Siebenmann:1977-1}. Next we use the identification $H^3(\Sigma Y_+; \Z/2)\cong H^2(Y;\Z/2)$, and by the Bockstein sequence for $0\to\Z\overset{\cdot 2}\to\Z\to\Z/2\to 0$, we identify \[H^3(\Sigma Y_+; \Z/2)/H^3(\Sigma Y_+; \Z)\cong H^2(Y; \Z/2)/H^3(Y; \Z) \cong \im(\delta^*),\] where $\delta^* \colon H^2(Y;\Z/2) \to H^3(Y;\Z)$ is the cohomology Bockstein. There is thus an exact sequence
\[
{[Y_+, \Omega(\Top/\PL)]} \cong H^2(Y;\Z/2) \overset{\delta^*}\twoheadrightarrow \im(\delta^*) \hookrightarrow {[Y_+, \Omega (\G/\PL)]}.
\]
By naturality of the Bockstein exact sequence under Poincar\'{e} duality, we have an identification $\im\big(\delta^*\colon H^2(Y;\Z/2) \to H^3(Y;\Z)\big) \cong \im \big(\delta_* \colon H_2(Y;\Z/2) \to H_1(Y;\Z)\big)$, and hence an exact sequence
\[
{[Y_+, \Omega(\Top/\PL)]} \cong H_2(Y;\Z/2) \overset{\delta_*}\twoheadrightarrow \im(\delta_*) \hookrightarrow {[Y_+, \Omega (\G/\PL)]}.
\]
Finally, $\im(\delta_*) = \ker\big(H_1(Y;\Z)\xrightarrow{\cdot 2}H_1(Y;\Z)\big)$ by the homology Bockstein long exact sequence.
\end{proof}

\begin{proof}[Proof of Theorem~\ref{thm:main-example} assuming that \eqref{diagram:master-diagram} commutes]
Recall that we constructed $X$ in Section~\ref{sub:the-manifold-X}.
Theorem~\ref{thm:main-example} is the statement that the map $\gamma$ in \eqref{diagram:master-diagram} is not injective.
 This is equivalent to the statement that there exists $F \in Q(X)$ with $\beta(F)$ nontrivial.

 As $H_1(\pi ; \Z) \cong H_1(A;\Z)$ has an element of order two, it follows that  $\ker \big(H_1(A;\Z)\xrightarrow{\cdot 2} H_1(A;\Z)\big)$ is nontrivial.
As indicated in the diagram, the composition \[\delta_*\,{\circ}\,\mft_*\,{\circ}\, \KS \colon Q(X) \lra \ker \big(H_1(A;\Z)\xrightarrow{\cdot 2} H_1(A;\Z)\big)\] is surjective. Choose a topological pseudo-isotopy $F \colon X \times I \to X \times I$ in $Q(X)$ with $\delta_*\,{\circ}\,\mft_*\,{\circ}\, \KS(F) \neq 0$.
We need to argue that its image under $\beta$ is nontrivial.  Supposing instead that $\beta(F)$ is trivial, we deduce that $F$ lies in the image of $\alpha$, and hence that the clockwise composition $\delta_*\,{\circ}\,\mft_*\,{\circ}\, \KS \,\circ \, \alpha$ is nontrivial. Since $j$ is injective, moreover the composition
\[j \,\circ \,\delta_*\,{\circ}\,\mft_*\,{\circ}\, \KS \,\circ \, \alpha \colon \pi_1(\pseudoHomeo^+(X)) \lra [A_+,\Omega(\G/\PL)]\] is nontrivial. By commutativity of the diagram \eqref{diagram:master-diagram}, this is equal to the anti-clockwise route
\[\pi_1(\pseudoHomeo^+(X)) \lra \pi_1(\hAut^+(X)) \overset{\mft_*}\lra  \pi_1(\hAut^+(A)) \overset{\eta}\lra  [A_+,\Omega(\G/\PL)]. \]
But commutativity of the bottom left triangle and surjectivity of $\pi_1(\Diffeo^+(A)) \twoheadrightarrow \pi_1(\hAut^+(A))$ implies that the bottom map $\eta$ is trivial, and hence the anti-clockwise route is trivial.  We obtain a contradiction, and so $\beta(F) = F|_{X \times \{1\}} \colon X \to X$ is our desired diffeomorphism topologically but not smoothly pseudo-isotopic to the identity.
 \end{proof}

\subsection{Stably framed manifolds are $\bS$-oriented}\label{sec:stabfrm}

Recall that for the ring spectrum $\mathbb{S}$, the corresponding homology theory is stable homotopy $\pi^s_*(-)$. We remind the reader again that in stable homotopy theory, homology theories are reduced by default, so for example the stable stem $\pi^s_*$ is given by~$\pi^s_*(S^0) = \pi^s_*(\pt_+)$.

We will now explain that stably framed manifolds have fundamental classes with respect to the sphere spectrum $\bS$ and so satisfy Poincar\'{e} duality with respect to any generalised (co)homology theory, by the discussion in Section \ref{subsec:orientation}.

First we prove a lemma that gives a simple way to decide if a stable homotopy class is a fundamental class.

\begin{lemma}\label{lem:DetectSTheoryFundamentalClass}
Let $Y$ be a $d$-dimensional compact topological manifold with $($possibly nonempty$)$ boundary. A class $\alpha \in \pi_d^s(Y/\partial Y)$ is an $\mathbb{S}$-theory fundamental class if and only if its Hurewicz image $h(\alpha) \in \widetilde{H}_d(Y/\partial Y ; \mathbb{Z})$ is an integral homology fundamental class.
\end{lemma}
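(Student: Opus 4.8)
The plan is to leverage the local characterisation of fundamental classes recalled in Section~\ref{subsec:orientation} together with the compatibility of the Hurewicz map with collapse maps onto coordinate charts. Recall that $\alpha \in \pi_d^s(Y/\partial Y)$ is an $\mathbb{S}$-orientation precisely when, for every chart $\mathbb{R}^d \subseteq Y$ with collapse map $c\colon Y/\partial Y \to S^d$, the image $c_*(\alpha) \in \pi_d^s(S^d) \cong \Z$ is $\pm 1$; and $h(\alpha) \in \widetilde H_d(Y/\partial Y;\Z)$ is an integral fundamental class precisely when $c_*(h(\alpha)) \in \widetilde H_d(S^d;\Z) \cong \Z$ is $\pm 1$ for every such chart. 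The Hurewicz transformation $h\colon \pi_*^s(-) \to \widetilde H_*(-;\Z)$ is natural, so $c_*(h(\alpha)) = h(c_*(\alpha))$; and on $S^d$ the Hurewicz map $\pi_d^s(S^d) \to \widetilde H_d(S^d;\Z)$ is an isomorphism $\Z \xrightarrow{\cong} \Z$ (this is the stable Hurewicz theorem for the sphere, or simply the fact that the unit $\bS \to \mathrm H\Z$ is an iso on $\pi_0$) carrying generators to generators. Hence $c_*(\alpha) = \pm 1$ if and only if $c_*(h(\alpha)) = \pm 1$, and the two conditions match chart-by-chart.

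First I would note that if $\partial Y \neq \emptyset$ the group $\pi_d^s(Y/\partial Y)$ may a priori be empty of fundamental classes, but this does not affect the equivalence being claimed: the statement is a biconditional about a \emph{given} class $\alpha$, so there is nothing to prove about existence. One subtlety worth addressing explicitly is that $Y/\partial Y$ need not be connected if $Y$ is disconnected; in that case the local condition is imposed using charts in \emph{each} component, and the argument above applies componentwise without change, since the Hurewicz map is natural for the inclusion of each wedge summand $Y_i/\partial Y_i \hookrightarrow Y/\partial Y$.

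The argument is essentially formal once the local characterisation from \cite[V.2.1]{Rudyak} is in hand, so there is no serious obstacle; the only point requiring a word of care is the claim that $h\colon \pi_d^s(S^d) \to \widetilde H_d(S^d;\Z)$ is an isomorphism. This follows because, after the suspension isomorphisms identifying both sides with $\pi_0^s(S^0)$ and $\widetilde H_0(S^0;\Z)$ respectively, the Hurewicz map becomes the map $\pi_0^s \to H_0(\bS;\Z) = \pi_0(\bS \wedge \mathrm H\Z)$ induced by the unit $\bS \to \mathrm H\Z$, which is an isomorphism on $\pi_0$ by the Hurewicz theorem for connective spectra (indeed $\bS$ is connective with $\pi_0 \bS \cong \Z$). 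Tracing through the suspension isomorphisms confirms that a generator of $\pi_d^s(S^d)$ maps to a generator of $\widetilde H_d(S^d;\Z)$, which is all that is needed.
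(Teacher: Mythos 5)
Your proposal is correct and follows essentially the same route as the paper's proof: the local chart-collapse characterisation of fundamental classes, naturality of the Hurewicz transformation ($c_*(h(\alpha)) = h(c_*(\alpha))$), and the fact that $h\colon \pi_d^s(S^d) \to \widetilde{H}_d(S^d;\Z)$ is an isomorphism taking generators to generators. Your extra remarks on components and on identifying the Hurewicz map with the unit $\bS \to \mathrm{H}\Z$ are harmless elaborations of the same argument.
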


\begin{proof}
If $q\colon Y/\partial Y \to Y/(Y - \mathrm{int}(D^d)) \cong D^d/\partial D^d \cong S^d$ is a coordinate chart collapse map then $q_*(\alpha) \in \pi_d^s(S^d)$. The Hurewicz map $h\colon \pi_d^s(S^d) \to \widetilde{H}_d(S^d ; \mathbb{Z})$ is an isomorphism, so $q_*(\alpha)$ corresponds to $\pm 1 \in \pi_0^s(S^0)$ under the suspension isomorphism if and only if $h(q_*(\alpha)) = q_*(h(\alpha))$ corresponds to $\pm 1 \in \widetilde{H}_0(S^0 ; \mathbb{Z})$ under the suspension isomorphism, i.e.\ if and only if $h(\alpha)$ is an integral homology fundamental class.
\end{proof}

Let $X$ and $A$ be as in Definition~\ref{def:XandA}. For some $N \gg 0$, let $e\colon X\hookrightarrow \R^{4+N}$ be a smooth embedding, and write $\nu_e^\Diff$ for the normal bundle. Apply the functor $\Sigma^{-N}\Sigma^\infty$ to the corresponding Thom collapse map $S^{4+N}\to \Th(\nu_e^\Diff)$ to obtain a Thom collapse map of spectra
\[
c\colon \bS^4 \lra \Th(\nu_{X}^\Diff).
\]
The target here is of course the same as $\Th(\nu_{X}^\G)$, for $\nu_X^\G$ the Spivak fibration of $X$: the Thom spectrum of a vector bundle only depends on the underlying spherical fibration, and we may freely interchange them.  Assume $N$ is large enough that the stable framing on $\nu_X^\Diff$ is represented by a framing of $\nu_e^\Diff$ by $\nu_e^\Diff \cong \R^N\times X$. Passing back to the stable version, and its Thom spectrum, we have
\begin{equation}\label{eq:equivalence}
\Th(\nu_X^\Diff)=\Sigma^{-N}\Sigma^\infty\Th(\nu_e^\Diff)\cong \Sigma^{-N}\Sigma^\infty\Th(\R^N\times X)\simeq \Sigma^\infty X_+.
\end{equation}
Under this sequence of equivalences of spectra, the map $c$ corresponds to a class $[X]_\bS \in \pi_4^s(X_+)$.

\begin{lemma}\label{lem:orientations}
The class $[X]_\bS \in \pi_4^s(X_+)$ is a fundamental class in stable homotopy theory. Furthermore, the image $[A]_\bS := \mft_*[X]_\bS \in \pi_4^s(A_+)$ under the Postnikov truncation map $\mft\colon X\to A$ is also a fundamental class in stable homotopy theory.
\end{lemma}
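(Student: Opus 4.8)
The plan is to reduce both assertions to statements about ordinary integral homology via Lemma~\ref{lem:DetectSTheoryFundamentalClass}. Since $X$ and $A$ are closed, we have $X/\partial X = X_+$ and $A/\partial A = A_+$, so by that lemma it suffices to verify that the Hurewicz images $h([X]_\bS) \in \widetilde{H}_4(X_+;\Z) = H_4(X;\Z)$ and $h([A]_\bS) \in H_4(A;\Z)$ are integral homology fundamental classes.

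For $[X]_\bS$: I would trace the construction through the naturality of the Hurewicz transformation, which is compatible with (de)suspension of spectra and hence with the chain of equivalences $\eqref{eq:equivalence}$. Under those equivalences the relevant identification $\widetilde{H}_4(\Th(\nu_X^\Diff);\Z) \cong \widetilde{H}_{4+N}(\Th(\nu^\Diff_e);\Z) \cong H_4(X;\Z)$ is the homology Thom isomorphism of the normal bundle $\nu^\Diff_e$ composed with a desuspension isomorphism, and the chosen framing is exactly what identifies this Thom isomorphism with the suspension isomorphism. It is then the classical homological description of the Pontryagin--Thom collapse --- the homology counterpart of the Atiyah-duality construction recalled in Section~\ref{sec:Atiyah}, cf.\ also Lemma~\ref{lem:SWcollapse} --- that the collapse map $S^{4+N} \to \Th(\nu^\Diff_e)$ carries the generator of $H_{4+N}(S^{4+N};\Z)$ to the image of the fundamental class $[X]$ under the Thom isomorphism. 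Hence $h([X]_\bS) = \pm[X]$, and Lemma~\ref{lem:DetectSTheoryFundamentalClass} yields the first claim.

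For $[A]_\bS = \mft_*[X]_\bS$: naturality of the Hurewicz map gives $h([A]_\bS) = \mft_*\, h([X]_\bS) = \pm\,\mft_*[X]$. As recorded in Section~\ref{sub:the-manifold-X}, the Postnikov truncation $\mft\colon X \to \BB\pi \simeq A$ may be modelled by collapsing $g(S^2\times S^2)\setminus\operatorname{Int}(D^4) \subseteq X$, and so has degree $\pm 1$; therefore $\mft_*[X] = \pm[A]$, the integral fundamental class of the closed oriented manifold $A$. Applying Lemma~\ref{lem:DetectSTheoryFundamentalClass} to the compact topological manifold $A$ then shows that $[A]_\bS$ is an $\bS$-theory fundamental class, completing the proof.

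The one step that is standard but not purely formal is the identification $h([X]_\bS) = \pm[X]$: it requires bookkeeping of the framing and the Thom isomorphism across $\eqref{eq:equivalence}$ together with the classical behaviour of the Pontryagin--Thom collapse in homology. Once that is in hand, the remainder is a formal consequence of the naturality of $h$ and the degree-$\pm1$ property of $\mft$ established when $X$ was constructed.
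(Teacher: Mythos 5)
Your proposal is correct and takes essentially the same route as the paper: both reduce the two claims to integral homology via Lemma~\ref{lem:DetectSTheoryFundamentalClass}, identify $h([X]_\bS)=\pm[X]$ using the standard homological behaviour of the Thom collapse map together with the Thom isomorphism determined by the stable framing, and then deduce the statement for $[A]_\bS$ from naturality of the Hurewicz map and the degree-$\pm1$ property of $\mft$.
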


\begin{proof}
The group $H_4(\bS^4;\Z) \cong [\bS^0,\bS^4 \wedge \mathrm{H}\Z] \cong [\bS^0,\Sigma^{\infty}S^4_+ \wedge \mathrm{H}\Z]$ is isomorphic to $\Z$, generated by the class $[\bS^4]$, which is the class that maps to the fundamental class of the $4$-sphere under these isomorphisms.
It is a property of the Thom collapse map that $c_*[\bS^4] \in H_4(\Th(\nu_{X}^\Diff);\Z)$ is a generator. So under the Thom isomorphism (with respect to the orientation given by the stable framing) this class corresponds to an integral homology fundamental class $[X] \in H_4(X;\Z)$. This is the image of~$[X]_\bS$ under the Hurewicz map, so~$[X]_\bS$ is a fundamental class by Lemma~ \ref{lem:DetectSTheoryFundamentalClass}.

For the second statement, we observed already that $\mft \colon X \to \BB\pi \simeq A$ has degree $\pm 1$, meaning that $\mft_*[X] \in H_4(A;\Z)$ is an integral homology fundamental class.  The class $\mft_*[X]$ is the Hurewicz image of $[A]_{\bS}:=\mft_*[X]_\bS \in \pi_4^s(A_+)$, so again by Lemma \ref{lem:DetectSTheoryFundamentalClass} it  follows that  $[A]_{\bS}$ is also a stable homotopy fundamental class.
\end{proof}

Every spectrum is an $\bS$-module, so an $\bS$-orientation on a manifold induces an orientation in any generalised (co)homology theory. This means that for any spectrum $F$ the manifolds $X$ and $A$ have Poincar\'{e} duality isomorphisms (see Subsection~\ref{subsec:orientation}). Using these, any map of $\bS$-oriented manifolds $f\colon X\to A$ determines an umkehr map
\[
f_!\colon {F}^r(X)\xrightarrow{PD}{F}_{4-r}(X)\xrightarrow{f_*} {F}_{4-r}(A)\xrightarrow{PD^{-1}} {F}^r(A).
\]

\begin{lemma}\label{lem:UmkehrSplit}
If $f_*([X]_\bS) = [A]_\bS$ then the umkehr map satisfies the identity $f_! \circ f^* = \mathrm{Id}$.
\end{lemma}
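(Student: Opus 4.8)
The plan is to use the projection formula relating cap products, pushforwards, and the Poincaré duality isomorphisms determined by the $\bS$-fundamental classes. Recall that the $F$-theory Poincaré duality isomorphism on $X$ is $[X]_\bS \frown -\colon F^r(X)\to F_{4-r}(X)$, and similarly for $A$ with $[A]_\bS$; here the cap product is the one coming from the $\bS$-module structure on $F$ and the diagonal. The key ingredient is the projection formula: for a map $f\colon X\to A$ of $\bS$-oriented manifolds and a class $w\in F^r(A)$, one has
\[
f_*\big(f^*(w)\frown [X]_\bS\big) = w \frown f_*[X]_\bS \in F_{4-r}(A).
\]
This is the standard compatibility of cap products with pushforward (it follows from naturality of the Thom diagonal and the module structure map; see the references to Rudyak and Rudyak--Switzer-style arguments already invoked in Section~\ref{sec:Atiyah} and Subsection~\ref{subsec:orientation}). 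I would either cite this or give the one-line naturality-of-diagonal argument.

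First I would unwind the definition of $f_!$: by definition $f_!(w) = PD^{-1}\big(f_*(PD(w))\big)$ where $PD$ on $X$ is capping with $[X]_\bS$ and $PD^{-1}$ on $A$ is the inverse of capping with $[A]_\bS$. Thus for $w\in F^r(A)$,
\[
f_!\big(f^*(w)\big) = \big([A]_\bS\frown -\big)^{-1}\Big(f_*\big(f^*(w)\frown [X]_\bS\big)\Big).
\]
By the projection formula the inner expression equals $w\frown f_*[X]_\bS$. Now invoke the hypothesis $f_*[X]_\bS = [A]_\bS$ to rewrite this as $w\frown [A]_\bS = \big([A]_\bS\frown -\big)(w)$. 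Applying the inverse isomorphism gives $f_!\big(f^*(w)\big) = w$, which is the claimed identity $f_!\circ f^* = \mathrm{Id}$.

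The main obstacle — really the only subtlety — is making sure the projection formula is available in the generality needed: for arbitrary spectra $F$ (not ring spectra), for the cap products defined via Atiyah duality and the $\bS$-module structure as set up in Subsection~\ref{subsec:orientation}, and in the relative/compact setting. I would handle this by tracing through the Atiyah-duality description: under the identification $F^r(X)\cong F_{4-r}(\Th(\nu_X^\Top))$ and the homological Thom isomorphism, capping with $[X]_\bS$ is (by \cite[V.1.3]{Rudyak} and the cited diagram of \cite[Theorem~14.41]{MR1886843}) literally Atiyah duality followed by the Thom isomorphism, and $f^*$, $f_*$ correspond under these identifications to the evident maps on Thom spectra induced by the bundle map $\nu_X^\Top\to f^*\nu_A^\Top$. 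In those terms $f_!\circ f^*=\mathrm{Id}$ becomes the statement that the composite of a Thom-spectrum map with its "wrong-way" partner, normalized by the fundamental classes, is the identity — which is exactly the content of $f_*[X]_\bS=[A]_\bS$ together with functoriality of Atiyah duality. A clean alternative is to reduce to the universal case: since $[A]_\bS\frown-$ is an isomorphism it suffices to check after applying it, and then both sides are natural transformations of $F$, so it is enough to verify the identity for $F=\bS$ and indeed on the level of the fundamental classes themselves, where it is immediate from the hypothesis.
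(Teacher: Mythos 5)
Your proposal is correct and is essentially identical to the paper's proof: both unwind the definition of $f_!$, apply the projection formula $f_*([X]_\bS \frown f^*(-)) = f_*([X]_\bS) \frown (-)$, substitute the hypothesis $f_*([X]_\bS)=[A]_\bS$, and cancel the isomorphism $[A]_\bS \frown -$. The paper takes the projection formula as given, just as you propose citing it, so no further comparison is needed.
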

\begin{proof}
Use respectively the definition of $f_!$, the projection formula, and the given identity $f_*([X]_\bS) = [A]_\bS$, to obtain
\[[A]_\bS \frown (f_!\circ f ^*)(-) = f_*([X]_\bS \frown f^*(-)) = f_*([X]_\bS) \frown (-) = [A]_\bS \frown (-).\] The lemma then follows from the fact that $[A]_\bS \frown (-)$ is an isomorphism.
\end{proof}

We  use the unkehr construction to define the maps $\mft_!$ in diagram~\eqref{diagram:master-diagram}. Consider the Postnikov truncation map $\mft\colon X\to A$.
Recall the spaces $\Top/\PL$ and $\G/\PL$ were given compatible infinite loop space structures coming from taking fibres of the infinite loop maps~$\BB\PL \to \BB\Top \to \BB\G$.
The umkehr maps~$\mft_!$ in diagram~\eqref{diagram:master-diagram} are formed using the spectra corresponding to these infinite loop space structures. Thus the central square involving~$\mft_!$, in diagram~\eqref{diagram:master-diagram}, commutes by naturality of the umkehr construction with respect to infinite loop maps.

\begin{lemma}\label{lem:righthandsquare}
The right-hand square involving~$\mft_!$, in diagram~\eqref{diagram:master-diagram}, commutes.
\end{lemma}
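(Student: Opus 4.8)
The plan is to reduce the claim to two observations: (a) in the relevant degree, the umkehr $\mft_!$ on the left of this square is simply the transfer for ordinary mod-$2$ cohomology; and (b) the $\bS$-theoretic Poincar\'e duality used to define that transfer agrees, with $\mathrm{H}\Z/2$-coefficients, with the ordinary Poincar\'e duality $PD$ appearing in the square. Granting (a) and (b), commutativity of the square is immediate from the definition of $\mft_!$.

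For (a), I would first identify the spectrum underlying the infinite loop space $\Top/\PL$. As $\Top/\PL \simeq K(\Z/2,3)$ as a space, and the homotopy groups of an infinite loop space coincide with those of its connective delooping spectrum, that spectrum has a single nonzero homotopy group $\Z/2$, in degree $3$, so it is the Eilenberg--MacLane spectrum $\mathrm{H}\Z/2[3]$. Consequently the generalised cohomology theory $F$ entering the construction of $\mft_!$ satisfies $[Y_+,\Omega(\Top/\PL)] \cong F^{-1}(Y) \cong H^2(Y;\Z/2)$, recovering the identification already used in the diagram, and in this degree $\mft_!$ is the umkehr for $H^2(-;\Z/2)$. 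By the description of the umkehr in Section~\ref{sec:stabfrm}, this map is the composite $PD_A^{-1}\circ \mft_* \circ PD_X$, where $PD_X \colon H^2(X;\Z/2) \to H_2(X;\Z/2)$ is cap product with the $\bS$-fundamental class $[X]_\bS$, taken in $\mathrm{H}\Z/2$-(co)homology, and similarly for $A$.

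For (b), I would invoke the compatibility of cap products with the unit map $\bS \to \mathrm{H}\Z/2$: capping with $[X]_\bS$ in $\mathrm{H}\Z/2$-theory agrees with capping with the image of $[X]_\bS$ under $\pi^s_4(X_+) \to H_4(X;\Z/2)$. By Lemma~\ref{lem:orientations} this image is the mod-$2$ reduction of an integral homology fundamental class, and since $H_4(Y;\Z/2)\cong\Z/2$ for a closed connected $4$-manifold, it is the unique $\Z/2$-fundamental class---precisely the class used to define ``ordinary Poincar\'e duality'' in the diagram. Hence $PD_X$ and $PD_A$ as above coincide with the maps labelled $PD$ in the square, and therefore $PD_A\circ\mft_! = \mft_*\circ PD_X$. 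The main (and only mild) obstacle will be the bookkeeping in (a) and (b)---confirming the spectrum of $\Top/\PL$ is $\mathrm{H}\Z/2[3]$ and that the $\bS$-orientation of Lemma~\ref{lem:orientations} reduces to the classical fundamental class---but no new geometric input is needed.
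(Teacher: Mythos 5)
Your proposal is correct and follows essentially the same route as the paper's proof: identify the spectrum underlying $\Omega(\Top/\PL)$ as a shifted Eilenberg--MacLane spectrum $\mathrm{H}\Z/2$, so that $\mft_!$ becomes the ordinary mod-$2$ cohomology umkehr, which is Poincar\'e dual to $\mft_*$. Your step (b), checking via the unit map $\bS\to\mathrm{H}\Z/2$ and Lemma~\ref{lem:orientations} that the $\bS$-orientation induces the classical $\Z/2$-fundamental class, makes explicit a compatibility the paper leaves implicit, but it is the same argument.
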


\begin{proof}
The map $\mft_! \colon [X_+, \Omega(\Top/\PL)] \to [A_+, \Omega(\Top/\PL)]$ is the umkehr map for the cohomology theory represented by the connective spectrum $\Sigma^{-1} (\mathrm{top}/\mathrm{pl})$ obtained from the infinite loop space $\Omega(\Top/\PL)$. This space is a $K(\Z/2,2)$, so the spectrum $\Sigma^{-1}( \mathrm{top}/\mathrm{pl})$ is the Eilenberg--MacLane spectrum $\Sigma^2\mathrm{H}(\Z/2)$. So this umkehr map is identified with the usual one $\mft_! \colon H^2(X;\Z/2) \to H^2(A;\Z/2)$, which is indeed Poincar{\'e} dual to $\mft_* \colon H_2(X;\Z/2) \to H_2(A;\Z/2)$.
\end{proof}

\subsection{Mapping into the $\PL$ normal invariants}\label{sub:mapping-into-PL-normal-invariants}
We will consider the following diagram.
\begin{equation}\label{diagram:Lshaped}
\begin{tikzcd}[column sep = 1em]
\pi_1(\pseudoHomeo (X)) \rar{\alpha} \arrow[dd] & Q(X) \rar{\Sm} & {[\tfrac{X \times I \times I}{\partial}, \BB(\Top/\OO)]} \arrow[r, "\cong"] & {[X_+, \Omega(\Top/\OO)]} \ar[d] \\
& & & {[X_+, \Omega(\Top/\PL)]} \ar[d]  \\
\pi_1(\hAut (X))\ar[rr] \arrow[rrr, bend right=15, "\eta"]& & \mathcal{S}^{\PL}_\partial(X \times I) \rar{\eta_\PL} & {[X_+, \Omega(\G/\PL)]}
\end{tikzcd}
\end{equation}

The first map in the bottom row involves the map that considers a loop of homotopy automorphisms of $X$ as a $\PL$-manifold structure on $X \times I$. The second map in the bottom row is the $\PL$ normal invariant map $\eta_\PL \colon \mathcal{S}^{\PL}_\partial(X \times I) \to [\Sigma X_+, \G/\PL]$ followed by the $\Sigma$-$\Omega$ adjunction. The map~$\eta$ is defined so that the semi-circle commutes.

\begin{lemma}\label{lem:newdiagram}
Diagram~\eqref{diagram:Lshaped} commutes.
\end{lemma}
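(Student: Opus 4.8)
The plan is to trace a topological pseudo-isotopy $F$ representing an element of $Q(X)$ around the two outer legs of diagram \eqref{diagram:Lshaped} and identify the resulting classes in $[X_+, \Omega(\G/\PL)]$. The crucial observation is that both legs pass, essentially by construction, through the same $\PL$-structure on $X \times I \times I$ (or equivalently on $X \times I$ after dealing with the loop/block degree shift). Going right-then-down, the class $\Sm(F) \in [\tfrac{X \times I \times I}{\partial}, \BB(\Top/\OO)]$ classifies the smoothing obstruction of $\partial(X\times I\times I)_F$; projecting $\BB(\Top/\OO) \to \BB(\Top/\PL)$ and then mapping into $\G/\PL$ records the underlying $\PL$-smoothing obstruction, which vanishes in the $\Top/\PL$ part of the comparison precisely because $F$ is a homeomorphism, so that the composite lands in the image of $[X_+,\Omega(\Top/\PL)] \to [X_+,\Omega(\G/\PL)]$. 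Going down-then-right, $\alpha(F)$ viewed in $\pi_1(\hAut(X))$ gives the $\PL$-manifold structure on $X \times I$ obtained by regarding the loop of homotopy automorphisms as a $\PL$ h-cobordism structure, and $\eta_\PL$ of this is by definition its $\PL$ normal invariant.

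The key step is then to match these two descriptions. First I would pin down the translation between "block" data on $X \times I$ and "loop" data on $X$: precomposing with $\alpha$ sends $[F] \in \pi_1(\pseudoHomeo(X))$ to the self-homeomorphism of $X \times I$ rel.\ boundary that the pseudo-isotopy $F$ of $X$ defines, and the isomorphism $\pi_1(\hAut(X)) \cong \pi_0(\hAut^+_\partial(X\times I))$ used for the lower $\eta$ means both sides see the \emph{same} underlying self-map of $X \times I$ — on the top route we have built a $\Top/\OO$-structure on $X\times I\times I$ from $F$, which is a smoothing of the topological h-cobordism $X\times I$ rel.\ its boundary; on the bottom route we have the $\PL$ h-cobordism structure on $X\times I$ coming from the block homotopy automorphism. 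These are the same structure up to the equivalence $\Top/\OO \to \Top/\PL$ at the level of smoothing theory, by the discussion in Section~\ref{sec:smoothing} relating smoothing obstructions, $\PL$-smoothing obstructions, and the structure sets. Concretely, I would invoke the compatibility of the classifying maps in diagram~\eqref{eqn-smoothing-thy} and its elaboration, exactly as was done in the proof of Lemma~\ref{lem:normalcommute}: the class $\ks\circ\Sm\circ\alpha([F])$ there was computed by identifying the smoothed manifold $\partial(X\times I\times I)_F$ with (the boundary smoothing induced by) the structure-set element, and the same identification, carried out one stage earlier (before applying $\ks$, and working with $\Top/\PL$ and $\G/\PL$ rather than $\Top/\OO$ and $K(\Z/2,4)$), yields precisely that $\eta_\PL$ of the $\PL$-structure equals the image of $\Sm(F)$ under $\Omega(\Top/\OO)\to\Omega(\Top/\PL)\to\Omega(\G/\PL)$.

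I would organise this as: (1) unwind the definition of $\eta$ so that the semicircle commutes by fiat, reducing the claim to commutativity of the square formed by $\alpha$, $\Sm$ (composed with the adjunction), the vertical maps $[X_+,\Omega(\Top/\OO)] \to [X_+,\Omega(\Top/\PL)]\to[X_+,\Omega(\G/\PL)]$, and $\eta_\PL$ precomposed with the block-to-structure-set map; (2) observe that the source of $\alpha$ is $\pi_1(\pseudoHomeo(X)) \cong \pi_0(\pseudoHomeo^+_\partial(X\times I))$, and that both routes associate to a class $[F]$ the manifold $X\times I\times I$ (resp.\ $X\times I$) equipped with a $\PL$-structure rel.\ boundary; (3) identify these $\PL$-structures using the smoothing-theory square, exactly paralleling the computation in the proof of Lemma~\ref{lem:normalcommute} but stopping at the $\BB(\Top/\PL)$ / $\G/\PL$ stage; (4) conclude that the two normal invariants agree. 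The main obstacle will be step (3): keeping the dimension-shift bookkeeping straight between the $6$-manifold $X\times I\times I$ carrying $\Sm(F)$, the $5$-manifold $X\times I$ carrying the $\PL$ h-cobordism structure, and the loop-space reindexings $[\tfrac{X\times I\times I}{\partial},-] \cong [X_+,\Omega^2(-)]$, and verifying that the natural map of smoothing problems really does carry one classifying map to the other — this is the same kind of careful-but-routine naturality check that occupied the proof of Lemma~\ref{lem:normalcommute}, and I would cross-reference that proof rather than repeat it.
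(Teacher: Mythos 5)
Your proposal is correct and takes essentially the same approach as the paper: both legs are unwound to the same difference class of $\PL$-reductions of the stable normal (micro)bundle of $X\times I$ (viewed as a lift of $\nu^{\Top}_{X\times I}$ on the clockwise route and of the Spivak fibration on the anticlockwise route), so the square commutes essentially tautologically. The paper's proof is a bit more direct than your step (3) — it works straight with the difference classes $d((\psi^{-1})^*\nu^{\PL}_{X\times I},\nu^{\PL}_{X\times I})$ rather than re-running the structure-set computation of Lemma~\ref{lem:normalcommute} — but the underlying identification is the one you describe.
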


 \begin{proof}
 The commutativity is essentially tautological. Here are the details. Write $\varphi$
 for a loop in~$\pseudoHomeo (X)$ (based at the identity map). This is represented by a homeomorphism $\psi\in\Homeo _\partial (X\times I)$. Write
\[
\nu^{\Diff}_{X\times I}\colon X\times I\lra \BO, \quad \nu^{\PL}_{X\times I}\colon X\times I\lra \BPL\quad\text{and}\quad\nu_{X\times I}^{\Top}\colon X\times I\lra \BTop
\]
for the stable normal bundle, then its underlying stable $\PL$-bundle, and then stable microbundle. There is a homotopy from $\nu_{X\times I}^{\Top}$ to $(\psi^{-1})^*\nu_{X\times I}^{\Top}$, induced by the stable normal microbundle of the mapping cylinder of $\psi$. Since we model $\BO \to \BTop$ by a fibration, this lifts to a homotopy from~$\nu^{\Diff}_{X \times I}$ to some lift of the tangent microbundle $\nu_{X\times I}^{\Top}$, and the lift may be identified with~$(\psi^{-1})^*\nu^{\Diff}_{X\times I}$.

Consider the clockwise composition. We have $\alpha(\varphi)=[\psi] \in Q(X)$. Under the identifications on the top row, the class $\Sm([\psi])$ is the difference class $d((\psi^{-1})^*\nu_{X\times I}^{\PL},\nu_{X\times I}^{\PL})$, measuring the difference of these lifts of $\nu_{X\times I}^{\Top}$ from $\Top$ to $\PL$, relative to the fixed lift on $\partial(X\times I)$. Under the forgetful map, which sends $\nu_{X\times I}^{\Top}$ to the Spivak normal fibration of $X\times I$, we obtain a $\PL$ normal invariant, which is again a difference class, now measuring the difference of the lifts from $\G$ to $\PL$
\begin{equation}\label{eq:tautology}
d((\psi^{-1})^*\nu_{X\times I}^{\PL},\nu_{X\times I}^{\PL})\in \left[\tfrac{X\times I}{\partial (X\times I)},\G/\PL\right].
\end{equation}

Now consider the anticlockwise composition. The first vertical map is forgetful, and the element of the $\PL$ structure set we obtain from $\varphi$ is again just the (class of) $\psi$. The $\PL$ normal invariant obtained from an element of the structure set is just the difference of the lifts to $\PL$ of the Spivak normal fibration. Thus it is the element displayed in~\eqref{eq:tautology}. This shows the diagram commutes, as claimed.
 \end{proof}

\subsection{Functoriality of $\PL$ normal invariants with respect to Postnikov truncation}\label{sub:functoriality-PL-normal-wrt-Post-truncation}

We will prove that the diagram
\begin{equation}\label{eq:etaUmkehr}
\begin{tikzcd}
\pi_1(\hAut (X)) \rar{{\eta}} \dar{\mft_*} & {[X_+, \Omega(\G/\PL)]} \dar{\mft_!}\\
\pi_1(\hAut (A)) \rar{{\eta}}  & {[A_+, \Omega(\G/\PL)]}
\end{tikzcd}
\end{equation}
commutes. This will complete the proof that diagram~\eqref{diagram:master-diagram} commutes and thus the proof of~Theorem~\ref{thm:main-example}. The proof that diagram~\eqref{eq:etaUmkehr} commutes will crucially use that the manifolds are 4-dimensional:~one should not expect it to hold in higher dimensions.

\subsubsection{Factoring through the tangential structure set}
To prove that diagram \eqref{eq:etaUmkehr} commutes we will factor the map $\eta$, using that $X$ (and $A$) are stably framed.
The approach is based on the ``tangential structure set'' idea of Madsen--Taylor--Williams~\cite{MR593058}, in which those authors consider an enriched structure set $\mathcal{S}^t_\partial(X \times I)$ where objects are not merely homotopy equivalences, but are also covered by maps of their (stable) tangent bundles. The purpose of this subsection is to establish the commutative diagram
\begin{equation}
\label{eq:MTWdiagram}
\begin{tikzcd}
\pi_1(\hAut (X)) \ar[r] \ar[d,"\wh{\eta}"] & \mathcal{S}^t_\partial(X \times I) \dar{\eta_t} \rar & \mathcal{S}^\PL_\partial(X \times I) \dar{\eta_\PL}\\
{[X_+, \Omega^{\infty+1} \bS^0]}\ar[r,"\cong"', "{(T_{0,1})_*}"] & {[X_+, \Omega \G]}  \rar & {[X_+, \Omega (\G/\PL)].}
\end{tikzcd}
\end{equation}
In this diagram, the homomorphism $\eta$, from earlier, is the right-right-down composition.
We will see from the development that the right-hand square is defined and commutes without the assumption that~$X$ is stably framed, but the top left horizontal arrow and the map $\widehat{\eta}$ require the stable framing of $X$ to even be defined.

We begin by defining the bottom left horizontal map.

\begin{definition}
Let $u$ and $v$ be elements of a loop space~$\Omega Z$. There is a homotopy equivalence of based spaces
\[
T_{u,v}\colon (\Omega Z,u) \lra (\Omega Z,v)
\]
given by composing the concatenating map $(\Omega Z,u) \lra (\Omega Z,vu^{-1}u);\;\; \gamma\mapsto vu^{-1}\gamma$ with the homotopy inverse of the concatenating map $(\Omega Z,v) \to (\Omega Z,vu^{-1}u);\;\; \delta \mapsto \delta u^{-1}u$.
We call this homotopy equivalence \emph{translating the basepoint} from $u$ to $v$.
\end{definition}

\begin{remark}\label{rem:basepoint}
The infinite loop space $\Omega^{\infty} \bS^0$ is the colimit $\mathrm{colim}_k\Omega^kS^k$ and therefore can be thought of as a colimit of based maps from a $k$-sphere to itself. Prominent basepoint choices for this infinite loop space include the identity map $1$ and the constant map $0$; by default, the basepoint of $\Omega^{\infty} \bS^0$ is~$0$. Recall that there is an equality of spaces $\mathrm{SG} = \Omega^\infty_{1} \bS^0$, where $\mathrm{SG}\subseteq \G$ and $\Omega^\infty_{1} \bS^0$ denote the components containing the basepoint $1$ (see e.g.~\cite[Corollary~3.8]{MadsenMilgram}). Thus translating the basepoint $0\in \Omega^\infty \bS^0$  to the basepoint $1\in \mathrm{SG}\subseteq \G$ determines a based homotopy equivalence $T_{0,1}\colon (\Omega^\infty \bS^0,0)\simeq (\mathrm{SG},1)$. Consequently, there is a based homotopy equivalence of loop spaces ~$\Omega T_{0,1}\colon \Omega^{\infty+1}\bS^0\simeq \Omega\G$.
\end{remark}

\begin{construction}\label{construction:eta}
We construct a homomorphism of abelian groups
\[
\widehat{\eta}\colon\pi_1(\hAut (X)) \lra \left[X_+, \Omega^{\infty+1} \bS^0 \right]
\]
crucially using that $X$ is stably framed, so that \eqref{eq:equivalence} is available.
A loop in the Kan semi-simplicial group $\hAut (X)$ is a homotopy equivalence $\phi \colon X \times I \to X \times I$ restricting to the identity on $\partial(X \times I)$. By gluing $X \times \{0\}$ to $X \times \{1\}$ in the domain and projecting to $X$ in the codomain, we obtain a map $\psi \colon X \times S^1 \to X$ which restricts to the identity on $X \times \{1\}$.
Write $\mathrm{pr}_X \colon X \times S^1 \to X$ for the projection map.
Recall
that for maps of spectra $f,g\colon U\to V$, we write $f-g\colon U\to V$ for a representative of the homotopy class $[f]-[g]\in[U,V]$.
On suspension spectra the difference
\[
\Sigma^\infty\psi - \mathrm{pr}_X \colon \Sigma^\infty ( X\times S^1)_+ \lra \Sigma^\infty X_+
\]
is canonically homotopically trivial when restricted to $\Sigma^\infty(X\times\{1\})_+$, so descends to a map
\begin{equation}\label{eq:PhiMinusId}
[\Sigma^\infty\psi - \mathrm{pr}_X] \colon \Sigma^{\infty+1} X_+ = \Sigma^\infty \tfrac{ (X\times S^1)_+}{(X\times \{1\})_+} \lra \Sigma^\infty X_+.
\end{equation}

As $X$ is stably framed, it has an $\bS$-theory fundamental class $[X]_{\bS} \in [\bS^4,\Sigma^{\infty}X_+]$. We consider the image $[\Sigma^\infty\psi - \mathrm{pr}_X]_*(\Sigma [X]_\bS) \in [\bS^5, \Sigma^\infty X_+]$ of the suspension $\Sigma [X]_\bS \in [\bS^5,\Sigma^{\infty+1} X_+]$, and  define~$\widehat{\eta}(\phi)$ to be element of  $[X_+, \Omega^{\infty+1} \bS^0 ]$  corresponding to  $[\Sigma^\infty\psi - \mathrm{pr}_X]_*(\Sigma [X]_\bS)$
 under the isomorphisms
\[
[X_+, \Omega^{\infty+1} \bS^0 ] \cong [\Sigma^{\infty+1} X_+, \bS^0] \underset{AD}{\cong} [\bS^5, \Th(\nu_X^\G)] \underset{\eqref{eq:equivalence}}{\cong} [\bS^5, \Sigma^\infty X_+]
\]
given by adjunction, Atiyah duality and \eqref{eq:equivalence}.
Here the Atiyah duality isomorphism is given by applying the second version of Atiyah duality  in Section~\ref{sec:Atiyah} with $E = \bS$, $\partial X = \emptyset$, and $r=-1$, together with the fact that $\Th(\nu^{\Top}_X) \simeq \Th(\nu^{\G}_X)$, to obtain
\[
[\Sigma^{\infty+1}X_+,\bS^{0}] \cong [\Sigma^{\infty}X_+,\bS^{-1}] \underset{AD}{\cong} [\bS^0,\Th(\nu_X^\G) \wedge \Sigma^{-1-4}\bS^0] \cong [\bS^5,\Th(\nu_X^\G)].
\]
\end{construction}

\begin{remark}
We argue that $\wh{\eta}$ is a homomorphism of abelian groups.
The domain $\pi_1(\hAut (X))$
of $\wh{\eta}$ is an abelian group because the geometric realisation of $\hAut (X)$ is a group-like topological monoid, so $\pi_1(\hAut (X)) \cong \pi_1(|\hAut (X)|) \cong \pi_2(\BB|\hAut (X)|)$.
 For the codomain, maps into an infinite loop space  always form an abelian group. Identifying $\pi_1(\hAut (X))$ with $\pi_0(\hAut_{\partial}(X \times I))$,
 the group structure on the domain corresponds to stacking, and this corresponds to the group structure on the codomain $[X_+, \Omega^{\infty+1}\bS] \cong [\Sigma X_+, \Omega^\infty\bS] = [\tfrac{X \times I}{\partial(X \times I)}, \Omega^\infty \bS]$ given by the co-$H$-space~$\tfrac{X \times I}{\partial(X \times I)} \simeq \Sigma X_+$.
\end{remark}

\begin{remark}
Intuitively, we could describe $\widehat{\eta}(\phi)$ as measuring how the loop of homotopy equivalences $\phi$ acts upon the fundamental class $[X]_{\bS}\in \pi_4^s(X_+)$.
\end{remark}

Next we describe the tangential structure set and tangential normal invariant. The description begins with a recap of the ordinary ($\PL$) normal invariant, which is the right vertical map of diagram~\eqref{eq:MTWdiagram}. We use the somewhat non-standard description in \cite[Section 2]{MR593058} (see also~\cite[Section 4]{BrumfielMadsen}), with the changes necessary to deal with the fact that we work relative to the boundary whereas that paper does not.

\subsubsection{$\PL$ normal invariants via $f$-maps}
Given a compact space $Y$, define an \emph{$f$-map} $(\nu^q, t, \xi^q)$ over $Y$ to consist of two $\PL$ microbundles~$\nu^q$ and $\xi^q$ of the same dimension~$q$ over $Y$ together with a fibre homotopy equivalence $t\colon S(\nu^q) \xrightarrow{\simeq} S(\xi^q)$ between their underlying spherical fibrations. The $f$-maps $(\nu_i^q, t_i, \xi_i^q)$ for $i=1,2$ are \emph{stably homotopic} if there exist $f$-maps $(\gamma_i, \Id, \gamma_i)$ for $i=1,2$, such that $(\nu_1^q, t_1, \xi_1^q)\oplus (\gamma_1, \Id, \gamma_1)$ and $(\nu_2^q, t_2, \xi_2^q)\oplus (\gamma_2, \Id, \gamma_2)$ are homotopic as $f$-maps. It is shown in \cite[Section 4]{BrumfielMadsen} that the space~$\G/\PL$ classifies $f$-maps, considered up to stable homotopy.

An element of the structure set $\mathcal{S}^\PL_\partial(X \times I)$ is represented by a simple homotopy equivalence~$(\phi, \partial \phi)\colon (M, \partial M) \xrightarrow{\simeq} (X \times I, \partial (X \times I))$ from a $\PL$ manifold, with $\partial \phi$ a $\PL$-isomorphism. Next we recall the details of $\eta_{\PL}(\phi)$.
There is a canonical map, over $\phi$, of $\PL$ microbundles~$\widetilde{\phi}\colon \nu^{\PL}_M\to (\phi^{-1})^*\nu^\PL_M$. This induces a
reduction
\[
c_\phi\colon\bS^{5} \overset{c_M}\lra \Th(\nu^\PL_M)/\Th(\nu^\PL_{\partial M}) \xrightarrow{\Th(\widetilde{\phi})} \Th((\phi^{-1})^*\nu^\PL_M)/\Th(\nu^\PL_{\partial (X \times I)})
\]
that under the connecting map $\partial\colon  \Th((\phi^{-1})^*\nu^\PL_M)/\Th(\nu^\PL_{\partial (X \times I)}) \to \Sigma \Th(\nu^\PL_{\partial (X \times I)})$ in the Puppe sequence agrees with the suspension
\begin{equation}\label{eq:RedOnBdy}
\Sigma c_{\partial (X \times I)} \colon  \bS^5 = \Sigma \bS^4 \lra \Sigma \Th(\nu^\PL_{\partial (X \times I)})
\end{equation}
of the reduction for $\partial (X \times I)$. Further, as $\phi$ is a homotopy equivalence, the reduction $c_\phi$ is compatible with the $\Z$-coefficient fundamental class of $X\times I$ in the sense that $h_*[c_\phi]\frown U_{H\Z}=[X \times I]_{H\Z}\in H_5(X\times I,\partial(X\times I);\Z)$.
This compatibility means that the uniqueness theorem for Spivak fibrations \cite[Theorem I.4.19]{Browder} applies, to give a fibre homotopy equivalence $t_\phi\colon \nu^\G_{X \times I} \xrightarrow{\simeq} (\phi^{-1})^*\nu^\G_M$ rel.~boundary between the underlying spherical fibrations, unique such that
\begin{equation}\label{eq:FibHEqSquare}
\begin{tikzcd}
\bS^5 \rar{c_M} \dar{c_{X \times I}} & \Th(\nu^\PL_M)/\Th(\nu^\PL_{\partial M}) \dar{\Th(\widetilde{\phi})}\\
\Th(\nu^\PL_{X \times I})/\Th(\nu^\PL_{\partial (X \times I)}) \rar{\Th(t_\phi)} & \Th((\phi^{-1})^*\nu^\PL_M)/\Th(\nu^\PL_{\partial (X \times I)})
\end{tikzcd}
\end{equation}
commutes up to homotopy over $\Sigma \Th(\nu^\PL_{\partial (X \times I)})$. The $f$-map
data $(\nu^\PL_{X \times I}, t_\phi, (\phi^{-1})^*\nu^\PL_M)$ is classified by the element
\[
\eta_\PL(\phi) \in \left[\tfrac{X \times I}{\partial}, \G/\PL\right] \cong \left[X_+,\Omega (\G/\PL)\right].
\]

\subsubsection{Tangential $\PL$ normal invariants via $f$-maps}
Now we develop the central column of diagram~\eqref{eq:MTWdiagram} in parallel to the previous discussion, again following \cite[Section 2]{MR593058} and \cite[Section 4]{BrumfielMadsen}. The space~$\G$ may be considered as classifying a restricted type of $f$-map, consisting of the data $(\nu^q,t)$, where~$\nu^q$ is a single $\PL$ microbundle and $t\colon S(\nu^q) \xrightarrow{\simeq} S(\nu^q)$ is a fibre homotopy automorphism of its underlying spherical fibration, up to stable homotopy. For later use, we expand further on this classification. Given $(\nu^q,t)$, we may take a finite dimensional representative~$\xi$ of the stable inverse to $\nu^q$ and form $(\nu^q,t)\oplus(\xi,\Id)$. The resultant bundle is $\nu^q\oplus\xi\simeq \varepsilon^M$ for some integer $M$, and under this identification $t\ast\Id$ becomes a fibre homotopy equivalence of the trivial spherical fibration $Y\times S^{M-1}\to Y\times S^{M-1}$. Such a fibre homotopy equivalence is described by a map $Y\to \G(M)$. After postcomposing with the inclusion $\G(M)\subseteq \G$, it is the homotopy class of this map which classifies~$(\nu^q, t)$ up to stable homotopy; see~\cite[Section 4]{BrumfielMadsen}.

The \emph{$\PL$ tangential structure set} $\mathcal{S}^t_\partial(X \times I)$ is the (bordism group of) simple homotopy equivalences of pairs $(\phi, \partial \phi) \colon  (M, \partial M) \to (X \times I, \partial(X \times  I))$ from a $\PL$ manifold, with~$\partial \phi$ a $\PL$ isomorphism, and equipped with a $\PL$-microbundle map $\wh{\phi} \colon  \nu_M^\PL \to \nu_{X \times I}^\PL$ covering $\phi$ and restricting to the $\PL$ derivative of $\partial \phi$ over the boundary. The map $\widehat{\phi}$ determines a $\PL$ bundle identification $(\phi^{-1})^*\nu^{\PL}_M \cong \nu^{\PL}_{X \times I}$. Under this identification, the $\PL$ normal invariant classifies the data~$(\nu^\PL_{X \times I}, t_{\phi,\wh{\phi}}, \nu^\PL_{X \times I})$, where $t_{\phi, \wh{\phi}}\colon \nu^\G_{X \times I} \xrightarrow{\simeq} (\phi^{-1})^*\nu^\G_M \cong \nu^\G_{X \times I}$ is the composition whose first map is~$t_\phi$ and whose second is induced by our identification on the $\PL$ level.
There is then defined a \emph{tangential normal invariant}
\[
\eta_t \colon  \mathcal{S}^t_\partial(X \times I) \lra  \left[\tfrac{X \times I}{\partial}, \G\right] \cong [X_+, \Omega \G],
\]
where $\eta_t(\phi, \wh{\phi})\in \left[\tfrac{X \times I}{\partial}, \G\right] \cong [X_+,\Omega \G] $ is the element classifying the data $(\nu^\PL_{X \times I}, t_{\phi,\wh{\phi}})$.

\medskip

At this point we have enough to verify that the tangential $\PL$ normal invariant is a lift of the ordinary $\PL$ normal invariant.

\begin{lemma}
The right-hand square of diagram~\eqref{eq:MTWdiagram}
commutes.
\end{lemma}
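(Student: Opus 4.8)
The plan is to unwind the $f$-map descriptions of $\eta_\PL$ and $\eta_t$ recalled above, together with an $f$-map description of the map $[X_+,\Omega\G]\to[X_+,\Omega(\G/\PL)]$ forming the bottom edge of the right-hand square of \eqref{eq:MTWdiagram}. That map is induced by the natural map $\G\to\G/\PL$, and in the $f$-map picture it is the inclusion of the restricted type of $f$-map into the general one: it carries the datum $(\nu^q,t)$ classified by $\G$ to the datum $(\nu^q,t,\nu^q)$ classified by $\G/\PL$. This is immediate from the Brumfiel--Madsen classifications \cite[Section 4]{BrumfielMadsen} of $\G$ and $\G/\PL$ recalled above. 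Granting it, the commutativity of the right-hand square becomes a comparison of two explicit $f$-maps over $X\times I$.

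First I would fix a representative $(\phi,\partial\phi,\wh{\phi})$ of a class in $\mathcal{S}^t_\partial(X\times I)$ and write $\psi\colon(\phi^{-1})^*\nu^{\PL}_M\xrightarrow{\cong}\nu^{\PL}_{X\times I}$ for the $\PL$-microbundle isomorphism determined by $\wh{\phi}$; by hypothesis $\psi$ restricts over the boundary to the isomorphism induced by the $\PL$-isomorphism $\partial\phi$. By definition $\eta_t(\phi,\wh{\phi})$ classifies the $f$-map $(\nu^{\PL}_{X\times I},t_{\phi,\wh{\phi}})$ with $t_{\phi,\wh{\phi}}=S(\psi)\circ t_\phi$, so its image under the bottom edge of the right-hand square of \eqref{eq:MTWdiagram} classifies $(\nu^{\PL}_{X\times I},t_{\phi,\wh{\phi}},\nu^{\PL}_{X\times I})$. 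On the other hand, the image of $(\phi,\wh{\phi})$ under the top forgetful map is $\phi$, and $\eta_\PL(\phi)$ classifies $(\nu^{\PL}_{X\times I},t_\phi,(\phi^{-1})^*\nu^{\PL}_M)$, where $t_\phi$ is the fibre homotopy equivalence determined rel boundary by \eqref{eq:FibHEqSquare}.

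The remaining point is that these two $f$-maps over $X\times I$ are isomorphic rel boundary, and hence classified by the same element of $[\tfrac{X\times I}{\partial},\G/\PL]\cong[X_+,\Omega(\G/\PL)]$. The required isomorphism is $(\Id_{\nu^{\PL}_{X\times I}},\psi)$: it is the identity on the first entry; it carries $t_\phi\colon S(\nu^{\G}_{X\times I})\xrightarrow{\simeq}S((\phi^{-1})^*\nu^{\G}_M)$ to $S(\psi)\circ t_\phi=t_{\phi,\wh{\phi}}$, by the very definition of $t_{\phi,\wh{\phi}}$; and over the boundary term $\Th(\nu^{\PL}_{\partial(X\times I)})$ it is the identity, since $\psi$ restricts there to the $\PL$-derivative of $\partial\phi$, so it is compatible with the boundary suspension \eqref{eq:RedOnBdy} and with the square \eqref{eq:FibHEqSquare}. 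Concatenating these identifications gives that $\eta_\PL(\phi)$ equals the image of $\eta_t(\phi,\wh{\phi})$, which is the asserted commutativity.

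I expect the only genuine work to be the relative bookkeeping: confirming that $(\Id,\psi)$ is an isomorphism of $f$-maps \emph{rel the boundary data} appearing in \eqref{eq:RedOnBdy} and \eqref{eq:FibHEqSquare}, and that the standard fact that isomorphic $f$-maps over a space have homotopic classifying maps applies in the rel-boundary form used here. Pinning down the $f$-map description of $\G\to\G/\PL$ is the other point to handle carefully, although it is forced by the descriptions of $\G$ and $\G/\PL$ already recalled.
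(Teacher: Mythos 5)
Your proposal is correct and follows essentially the same route as the paper's proof: identify the bottom map as $(\nu^q,t)\mapsto(\nu^q,t,\nu^q)$, compute the two routes as the $f$-maps $(\nu^{\PL}_{X\times I},t_\phi,(\phi^{-1})^*\nu^{\PL}_M)$ and $(\nu^{\PL}_{X\times I},t_{\phi,\wh{\phi}},\nu^{\PL}_{X\times I})$, and observe that $\wh{\phi}$ furnishes an isomorphism between them. The paper's proof is just a terser version of yours; your extra care about the rel-boundary bookkeeping is reasonable but not a departure in method.
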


\begin{proof}
Given $(\phi, \wh{\phi})$, the top horizontal arrow is the map that forgets $\wh{\phi}$ and thus the clockwise route sends $(\phi, \wh{\phi})$ to $(\nu^\PL_{X \times I}, t_\phi, (\phi^{-1})^*\nu^\PL_M)$. The bottom horizontal arrow maps $(\nu^q, t)\mapsto (\nu^q, t, \nu^q)$ and hence the anticlockwise route gives $(\nu^\PL_{X \times I}, t_{\phi, \wh{\phi}}, \nu^\PL_{X \times I})$. Using $\wh{\phi}$, we see these $f$-maps are isomorphic.
\end{proof}

To prove the left-hand square of diagram~\eqref{eq:MTWdiagram} commutes, we will first describe another point of view on the element $\eta_t(\phi, \wh{\phi})$. Given $(\phi, \wh{\phi})\in \mathcal{S}^t_\partial(X \times I)$, take the diagram
\begin{equation*}
\begin{tikzcd}
\bS^5 \rar{c_{M}} \arrow[rrd, swap, bend right = 10,   "\Sigma c_{\partial(X \times I)}"] & \Th(\nu^\PL_{M})/\Th(\nu^\PL_{\partial M}) \rar{\Th(\wh{\phi})} & \Th(\nu^\PL_{X \times I})/\Th(\nu^\PL_{\partial (X \times I)}) \dar{\partial} \\
 & & \Sigma \Th(\nu^\PL_{\partial (X \times I)}),
\end{tikzcd}
\end{equation*}
apply the functor $\SW\circ\Sigma^{-5}$,
to obtain
\begin{equation*}
\begin{tikzcd}
\bS   & \mbox{\hspace{30ex}} & \Sigma^\infty(X \times I)_+ \arrow[ll, "\SW\circ\Sigma^{-5}(\Th(\wh{\phi})\circ c_M)"'] \\
 & & {\Sigma^\infty\partial (X \times I)_+}. \uar \arrow[llu, bend left = 10,  "C_{\partial(X\times I)}"]
\end{tikzcd}
\end{equation*}
To verify this, recall the properties of Spanier--Whitehead duals from Section~\ref{sec:Atiyah}, and also apply~\cref{lem:SWcollapse}, where the notation $C_{\partial(X\times I)}$ was introduced for the map of spectra induced by the constant map $\partial(X \times I) \to \pt$. Applying $\Sigma$-$\Omega$ adjunction and discarding basepoints, we obtain a diagram of unbased spaces.
\begin{equation}\label{eq:SecondDefEtaT}
\begin{tikzcd}
\Omega^\infty \bS   & \mbox{\hspace{30ex}} & X \times I \arrow[ll, "\Ad\circ \SW\circ\Sigma^{-5}(\Th(\wh{\phi})\circ c_M)"'] \\
 & & {\partial (X \times I)}. \uar \arrow[llu, bend left = 10,  "1"]
\end{tikzcd}
\end{equation}
The adjoint to $C_{\partial (X \times I)}$
 is the constant map to the identity basepoint $1 \in \Omega^\infty \bS$, by~\cref{lem:SWcollapse}. Thus the horizontal map also lands in the path component $\mathrm{SG} \subseteq \Omega^\infty \bS$ of the identity map, and there is an induced map~$\tfrac{X \times I}{\partial} \to \mathrm{SG} \subseteq \G$.
The following is asserted in \cite[Section 2]{MR593058}, without proof. We provide one here.

\begin{lemma}\label{lem:newversion}
The tangential $\PL$ normal invariant $\eta_t(\phi, \wh{\phi})\in \left[\tfrac{X \times I}{\partial}, \G\right]$ is equal to the element
described by~\eqref{eq:SecondDefEtaT}.
\end{lemma}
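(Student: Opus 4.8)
The claim is that two descriptions of the tangential normal invariant $\eta_t(\phi,\wh\phi) \in [\tfrac{X \times I}{\partial}, \G]$ agree: the ``$f$-map'' description via the data $(\nu^\PL_{X\times I}, t_{\phi,\wh\phi})$, and the ``Spanier--Whitehead dual of the collapse'' description coming from diagram~\eqref{eq:SecondDefEtaT}. The plan is to track both descriptions back to the same self-map of a trivial spherical fibration, using the recollection (from the discussion preceding the lemma) that $\G$ classifies pairs $(\nu^q,t)$ by stabilising to a trivial bundle $\varepsilon^M$ and recording the resulting self-homotopy-equivalence $Y \times S^{M-1} \to Y \times S^{M-1}$, i.e.\ a map $Y \to \G(M) \subseteq \G$.

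\emph{Step 1: Unwind the $f$-map side.} Starting from $(\nu^\PL_{X\times I}, t_{\phi,\wh\phi})$, use $\wh\phi$ to identify $(\phi^{-1})^*\nu^\PL_M \cong \nu^\PL_{X\times I}$, so that $t_{\phi,\wh\phi}$ is $t_\phi$ followed by this identification. Then stabilise: pick a finite-dimensional $\xi$ with $\nu^\PL_{X\times I} \oplus \xi \simeq \varepsilon^M$, and observe that the defining commutative square~\eqref{eq:FibHEqSquare} for $t_\phi$, after adding $(\xi,\Id)$ and passing to Thom spaces, exhibits $t_{\phi,\wh\phi}$ stably as the self-equivalence of the trivial sphere bundle that on Thom spaces (after collapsing $M/\partial M \simeq \Sigma^{M-4}\Sigma^\infty(X\times I)_+/\cdots$, using Atiyah duality) realises the map $\Th(\wh\phi) \circ c_M$ relative to its fixed behaviour over the boundary~\eqref{eq:RedOnBdy}. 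This is precisely the content of the Spivak uniqueness normalisation: $t_\phi$ is \emph{characterized} by making~\eqref{eq:FibHEqSquare} commute over $\Sigma\Th(\nu^\PL_{\partial(X\times I)})$.

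\emph{Step 2: Unwind the dual side.} The element described by~\eqref{eq:SecondDefEtaT} is obtained by applying $\SW \circ \Sigma^{-5}$ to $\Th(\wh\phi)\circ c_M$ and then the $\Sigma$--$\Omega$ adjunction; by Lemma~\ref{lem:SWcollapse} the Spanier--Whitehead dual of a collapse map $\bS \to \Sigma^{-d}(\Th(\nu)/\Th(\nu_\partial))$ is the augmentation $C_X \colon \Sigma^\infty X_+ \to \bS$, whose adjoint is the constant map to $1 \in \Omega^\infty\bS$; this is what pins the resulting map into the component $\mathrm{SG} = \Omega^\infty_1\bS$ and makes the boundary condition match. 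The key identity to prove is that, under the equivalence $\Th(\nu^\PL_{X\times I})/\Th(\nu^\PL_{\partial}) \oplus (\xi,\Id) \simeq $ (trivial sphere bundle), the self-equivalence $t_{\phi,\wh\phi}$ and the map $\Ad\circ\SW\circ\Sigma^{-5}(\Th(\wh\phi)\circ c_M)$ are the same element of $[\tfrac{X\times I}{\partial}, \G(M)]$. Here I would use the standard adjunction-unwinding: for a trivial bundle $Y \times S^{M-1}$, a fibre self-equivalence corresponds to a map $Y \to \G(M)$, and the collapse-then-dualize procedure computes exactly that map, because the Thom collapse for a trivial bundle is (up to the stable equivalence) the suspension of $Y_+ \to S^0$, and dualizing interchanges ``map classifying a fibre homotopy equivalence'' with ``the induced stable self-map of Thom spectra.'' Concretely: both descriptions are computed from the single map $\Th(\wh\phi)\circ c_M \colon \bS^5 \to \Th(\nu^\PL_{X\times I})/\Th(\nu^\PL_{\partial})$, the first by interpreting it (via Spivak uniqueness) as a fibre homotopy self-equivalence of the normal spherical fibration, the second by Spanier--Whitehead dualizing it; and these two operations agree after identifying $\G$-classification of $f$-maps with self-maps of trivialized sphere bundles.

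\emph{Main obstacle.} The crux is bookkeeping the relative (rel.\ boundary) structure through Spanier--Whitehead duality: one must check that the boundary normalisations — the factorisation through $\Sigma c_{\partial(X\times I)}$ in~\eqref{eq:RedOnBdy} on the $f$-map side, and the constant map $C_{\partial(X\times I)}$ to $1 \in \Omega^\infty\bS$ on the dual side — correspond to one another under $\SW$, which is exactly where Lemma~\ref{lem:SWcollapse} is used. The rest is an application of the Spivak normal fibration uniqueness theorem~\cite[Theorem~I.4.19]{Browder} together with the elementary fact that for a \emph{trivial} sphere bundle, the self-homotopy-equivalences are classified by maps to $\G(M)$ in a way compatible with Thom-space duality; I would isolate this as a small sub-lemma (``classifying an $f$-map $(\varepsilon^M, t)$ over $Y$ rel.\ a sub-space agrees with the dual of the self-map of Thom spectra''), prove it for trivial bundles directly, and then stabilise a general $\nu^\PL_{X\times I}$ to the trivial case. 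No deep input beyond~\eqref{eq:FibHEqSquare}, Lemma~\ref{lem:SWcollapse}, and Browder's uniqueness theorem should be needed; the work is entirely in matching conventions.
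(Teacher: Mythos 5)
Your high-level picture is right, and you correctly identify the three inputs that the paper also uses: the analogue of the square \eqref{eq:FibHEqSquare} with $t_{\phi,\wh{\phi}}$ in place of $t_\phi$, which lets one trade $\Th(\wh{\phi})\circ c_M$ for $\Th(t_{\phi,\wh{\phi}})\circ c_{X\times I}$; Lemma~\ref{lem:SWcollapse}, which matches the boundary normalisations by dualising the collapse map to the augmentation; and the classification of $f$-maps $(\nu^q,t)$ by stabilising to a trivial spherical fibration. This is the skeleton of the paper's argument.

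The gap is that you never actually compute $\SW(\Sigma^{-5}\Th(t_{\phi,\wh{\phi}}))$, and this computation is the entire content of the lemma (the statement is exactly what \cite{MR593058} asserts without proof). You claim that ``dualizing interchanges the map classifying a fibre homotopy equivalence with the induced stable self-map of Thom spectra,'' and propose to prove this for trivial bundles and then stabilise via $\nu^\PL_{X\times I}\oplus\xi\simeq\varepsilon^M$. But the Spanier--Whitehead duality in play is Atiyah duality between $\Sigma^\infty(X\times I)_+$ and $T=\Sigma^{-5}\Th(\nu^\PL_{X\times I})/\Th(\nu^\PL_{\partial(X\times I)})$; replacing $\nu^\PL_{X\times I}$ by $\varepsilon^M$ destroys this duality datum ($\Th(\varepsilon^M)\simeq\Sigma^M\Sigma^\infty(X\times I)_+$ is not the Atiyah dual, and there is no collapse map $c_{X\times I}$ into it in general), so the proposed trivial-bundle sub-lemma does not apply directly. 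The paper's proof instead stabilises by $\xi\ast\nu$ rather than by $\xi$ alone, so that $\eta_t(\phi,\wh{\phi})\colon X\times I\to\G(M)$ is read off the trivialised factors while a copy of $\Th(\nu)$ survives as the duality partner; this yields the factorisation \eqref{eq:decomp} of $\Th(t_{\phi,\wh{\phi}})$ through the (desuspended) Thom diagonal $T\to T^\vee\wedge T$ followed by $\Sigma^\infty\eta_t(\phi,\wh{\phi})\wedge\Id$ and $a\wedge\Id$. Even granted that factorisation, one still needs a genuine diagram chase — using co-associativity of the Thom diagonal as a co-module structure over $\Sigma^\infty(X\times I)_+$ together with the coevaluation/evaluation identities — to convert it into the claimed description \eqref{eq:DualOfThT} of the dual, and a final diagram to compare with \eqref{eq:SecondDefEtaT} after postcomposing with $C_{X\times I}$. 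The Thom diagonal, which is the crucial device throughout, does not appear anywhere in your proposal; without it the ``standard adjunction-unwinding'' you invoke does not go through.
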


\begin{proof}
There is a square
\[
\begin{tikzcd}
\bS^5 \ar[rr,"c_M"] \ar[d,"{c_{X \times I}}"]
&& \Th(\nu^\PL_M)/\Th(\nu^\PL_{\partial M}) \ar[d, "{\Th(\widehat{\phi})}"]
\\
\Th(\nu^\PL_{X \times I})/\Th(\nu^\PL_{\partial (X \times I)}) \ar[rr, "\Th(t_{\phi,\widehat{\phi}})"]
&& \Th(\nu^\PL_{X \times I})/\Th(\nu^\PL_{\partial (X \times I)})
\end{tikzcd}
\]
analogous to \eqref{eq:FibHEqSquare}, which commutes up to homotopy over $\Sigma \Th(\nu^\PL_{\partial (X \times I)})$.  The element \eqref{eq:SecondDefEtaT} is obtained by Spanier--Whitehead dualising the map $\Sigma^{-5} (\Th(\wh{\phi}) \circ c_M)$ over $\Sigma \Th(\nu^\PL_{\partial (X \times I)})$, hence obtaining a map under $\Sigma^\infty \partial(X \times I)_+$. By the square this map is homotopic under $\Sigma^\infty \partial(X \times I)_+$ to the result of Spanier--Whitehead dualising $\Sigma^{-5}(\Th(t_{\phi,\widehat{\phi}}) \circ c_{X \times I})$.  We must therefore show that the Spanier--Whitehead dual of $\Sigma^{-5}(\Th(t_{\phi,\widehat{\phi}}) \circ c_{X \times I})$ over $\Sigma \Th(\nu^\PL_{\partial (X \times I)})$ is $\Sigma$-$\Omega$-adjoint to the map $\eta_t(\phi, \wh{\phi})\colon X \times I \to \G \subset \Omega^\infty \bS$ under $1 \colon \partial(X \times I) \to \G \subseteq \Omega^\infty \bS$.

By Lemma~\ref{lem:SWcollapse}, the Spanier--Whitehead dual of $\Sigma^{-5}c_{X \times I}$ is the constant map $C_{X \times I} \colon \Sigma^\infty (X \times I)_+ \to \bS$, so to proceed we must understand the Spanier--Whitehead dual of $\Sigma^{-5}\Th(t_{\phi,\widehat{\phi}})$.
To do so, define the map $a \colon  \Sigma^\infty \G_+ \to \bS$
to be the adjoint of the inclusion $\G_+ \to \Omega^\infty\bS$.
For brevity, introduce the notation
\[
T := \Sigma^{-5} \tfrac{\Th(\nu^\PL_{X \times I})}{\Th(\nu^\PL_{\partial (X \times I)})} \qquad\text{and}\qquad T^\vee:=\Sigma^\infty (X \times I)_+,
\]
where $T^\vee$ indicates that these are Spanier--Whitehead dual.
\begin{claim}
The Spanier--Whitehead dual of
\[
\Sigma^{-5}\Th(t_{\phi, \wh{\phi}})  \colon   T\lra T,\qquad \text{over \,\,$\Sigma \Th(\nu^\PL_{\partial (X \times I)})$},
\]
is homotopic to the composition
\begin{equation}\label{eq:DualOfThT}
T^\vee \xrightarrow{\text{diag}} T^\vee \wedge T^\vee \xrightarrow{\Sigma^\infty\eta_t(\phi, \wh{\phi}) \wedge \Id} \Sigma^\infty \G_+ \wedge T^\vee \xrightarrow{a\wedge \Id} \bS\wedge T^\vee\simeq T^\vee,\qquad
\text{under \,\,$\Sigma^\infty \partial(X \times I)_+$}.
\end{equation}
\end{claim}

\begin{proof}[Proof of claim]
We first show that the map $\Th(t_{\phi, \wh{\phi}})$ may be decomposed as
\begin{equation}\label{eq:decomp}
\Sigma^5T\xrightarrow{\Delta} T^\vee\wedge \Sigma^5T\xrightarrow{\Sigma^\infty\eta_t(\phi, \wh{\phi}) \wedge \Id}\Sigma^\infty \G_+\wedge \Sigma^5T\xrightarrow{a\wedge \Id} \bS\wedge \Sigma^5T\simeq \Sigma^5T,
\end{equation}
where $\Delta$ denotes the Thom diagonal. To justify this, write $\nu\colon X\times I\to \BG(N)$ for the spherical fibration of a finite-dimensional representative of the $\PL$-normal bundle of $X\times I$. Write $\xi$ for the a finite-dimensional representative of the stable inverse to $\nu$, so that $\nu\ast\xi\simeq S(\varepsilon^M)$, the trivial spherical fibration for some $M$.
Now consider the self-equivalence
\[
t_{\phi,\widehat{\phi}}\ast \Id\ast\Id\colon \nu\ast\xi\ast\nu \to  \nu\ast\xi\ast\nu.
\]
We may identify the first two factors with $S(\varepsilon^M)$. Under this identification, the self-equivalence of spherical fibrations $t_{\phi,\widehat{\phi}}\ast \Id\colon \nu\ast\xi\to \nu\ast\xi$ is identified with the automorphism of the trivial spherical fibration $S(\varepsilon^M)=(X\times I)\times S^{M-1}$ described fibre-wise by the map $\eta_t(\phi,\widehat{\phi})\colon \tfrac{X\times I}{\partial}\to \G(M)$. Hence, the map $\Th(t_{\phi,\widehat{\phi}}\ast \Id\ast\Id)$ can be identified as the $M$-fold suspension of the composition~\eqref{eq:decomp}.
On the other hand, we can instead identifiy the latter two factors of $\nu\ast\xi\ast\nu$ with $S(\varepsilon^M)$, so that the map $t_{\phi,\widehat{\phi}}\ast \Id\ast\Id$ is just the $M$-fold stabilisation of $t_{\phi,\widehat{\phi}}$, giving $\Th(t_{\phi,\widehat{\phi}}\ast \Id\ast\Id)=\Sigma^M\Th(t_{\phi,\widehat{\phi}})$.
So the desired factorisation of $\Th(t_{\phi, \wh{\phi}})$ has been shown.

With this factorisation in hand, we can prove the claim. Write the coevaluation and evaluation maps for the pair $T$ and $T^\vee$ as
\[
\text{coev}  \colon  \bS \lra T^\vee \wedge T  \qquad\text{and}\qquad \text{ev}  \colon  T \wedge T^\vee \lra \bS.
\]
Consider the diagram
\begin{equation*}
\begin{tikzcd}
T^\vee \ar[r,"\simeq"] \ar[dd, "\text{diag}"]
&\bS\wedge T^\vee \ar[r,"{\text{coev} \wedge \Id}"]
& T^\vee \wedge T \wedge T^\vee \ar[d,"{\Id \wedge \Sigma^{-5}\Delta \wedge \Id}"]
& &&&
\\
 &&T^\vee \wedge T^\vee \wedge  T \wedge T^\vee \ar[rrrr,"{\Id \wedge \Sigma^\infty\eta_t(\phi, \wh{\phi}) \wedge \Id \wedge \Id}"] \ar[d, "{\Id \wedge \Id \wedge \text{ev}}"]
 &&&& T^\vee \wedge \Sigma^\infty \G_+ \wedge  T \wedge T^\vee \ar[d,"{\Id \wedge a\wedge\Id  \wedge \Id}"]
 \\
 T^\vee\wedge T^\vee\ar[rr,"\simeq"]& &T^\vee \wedge T^\vee\wedge\bS \ar[d,"{\Sigma^\infty\eta_t(\phi, \wh{\phi}) \wedge\Id\wedge  \Id}"]
  &&&& T^\vee \wedge \bS\wedge T \wedge T^\vee \ar[d,"{\Id \wedge \Id\wedge \text{ev}}"]
  \\
 & & \Sigma^\infty \G_+ \wedge T^\vee\wedge\bS \ar[rrrr,"{a\wedge \Id\wedge \Id} "]
  &&&& T^\vee.
\end{tikzcd}
\end{equation*}

By definition, the Spanier--Whitehead dual of the map $\Sigma^{-5}\Th(t_{\phi,\wh{\phi}})$ is the composition
\[
T^\vee\simeq \bS\wedge T^\vee\xrightarrow{\text{coev}\wedge\Id} T^\vee\wedge T\wedge T^\vee\xrightarrow{\Id\wedge \Sigma^{-5}\Th(t_{\phi,\wh{\phi}})\wedge\Id} T^\vee\wedge T \wedge T^\vee\xrightarrow{\Id\wedge \text{ev}} T^\vee\wedge \bS\simeq T^\vee.
\]
Using the factorisation~\eqref{eq:decomp}, this shows that the Spanier--Whitehead dual of the map $\Th(t_{\phi, \wh{\phi}})$ is the full clockwise composition in the diagram. The full anticlockwise composition in the diagram is the map~\eqref{eq:DualOfThT}. Hence the claim is proved if we can show the diagram commutes up to homotopy under $\Sigma^\infty \partial(X \times I)_+$.

To see the right rectangle commutes, simply note that the maps $\text{ev}$ and $a \circ \eta_t(\phi, \wh{\phi})$ operate upon different factors.
Now we show that the left rectangle commutes. The diagonal map $T^\vee\to T^\vee\wedge T^\vee$ is a co-ring structure on $T^\vee$ (true of any suspension spectrum), and the five-fold desuspension of the Thom diagonal $\Sigma^{-5}\Delta\colon T\to T^\vee\wedge T$ endows $T$ with the structure of co-module over $T^\vee$. This may be checked from the definition of the Thom diagonal (it corresponds to the well-known fact that the cohomology of the Thom space of a bundle is a module over the cohomology of the base space).  In particular, co-associativity $(\Id\wedge \Sigma^{-5}\Delta)\circ\Sigma^{-5}\Delta=(\text{diag}\wedge\Id)\circ\Sigma^{-5}\Delta$ holds. Recall that $\text{coev}$ is given by
\[
\bS \xrightarrow{\Sigma^{-5}c_{X \times I}}  T \xrightarrow{\Sigma^{-5}\Delta} T^\vee \wedge T,
\]
where the second map is the five-fold desuspension of the Thom diagonal. Hence, the co-associativity statement, above, means composition of the first two maps in the clockwise route around the left rectangle agrees with
\[
(\text{diag}\wedge\Id\wedge\Id)\circ (\text{coev}\wedge\Id)\colon \bS\wedge T^\vee \lra T^\vee \wedge T^\vee \wedge  T \wedge T^\vee.
\]
Now consider the full composition clockwise around the left rectangle as a map $\bS\wedge T^\vee\to T^\vee\wedge T^\vee\wedge \bS$. We may commute the latter two maps in the composition, as follows, because they operate on different factors:
\[
(\Id\wedge\Id\wedge\text{ev})\circ (\text{diag}\wedge\Id\wedge\Id)\circ(\text{coev}\wedge\Id)=(\text{diag}\wedge\Id)\circ(\Id\wedge\text{ev})\circ (\text{coev}\wedge\Id).
\]
But then $(\Id\wedge\text{ev})\circ (\text{coev}\wedge\Id)$ is the identity equivalence $\bS\wedge T^\vee\simeq T^\vee\wedge\bS$, so this equation is the statement that the left hand square commutes.
\end{proof}

We can now finish the proof of the lemma. By inspection, the following diagram
commutes up to homotopy under $\Sigma^\infty\partial(X \times I)_+$.
 \[
 \begin{tikzcd}
 T^\vee
 \ar[rrrr, "{\Sigma^\infty\eta_t(\phi, \wh{\phi})}"]
 \ar[rd, "\simeq"]
 \ar[dd, "\text{diag}"']
&&&&
\Sigma^\infty\G_+
\ar[rrrr, "a"]
\ar[d, "\simeq"]
&&&&
\bS
\\
&
T^\vee\wedge\bS
\ar[rrr, "{\Sigma^\infty\eta_t(\phi, \wh{\phi})\wedge \Id}"]
&&&
\Sigma^\infty\G_+\wedge\bS
\ar[rr, "a\wedge \Id"]
&&
\bS\wedge\bS
\ar[rru, "\simeq"]
&&
\\
T^\vee\wedge T^\vee
\ar[rrrr, "{\Sigma^\infty\eta_t(\phi, \wh{\phi})\wedge \Id}"']
\ar[ru, "\Id\wedge C_{X\times I}"']
&&&&
\Sigma^\infty\G_+\wedge T^\vee
\ar[rr, "a\wedge \Id"']
\ar[u, "\Id\wedge C_{X\times I}"']
&&
\bS\wedge T^\vee
\ar[rr, "\simeq"']
\ar[u, "\Id\wedge C_{X\times I}"']
&&
T^\vee
\ar[uu, "C_{X\times I}"']
 \end{tikzcd}
 \]
Under $\Sigma$-$\Omega$ adjunction, the composition along the top row is $\eta_t(\phi, \wh{\phi})\in \left[\tfrac{X \times I}{\partial}, \G\right]$. The anti-clockwise composition is~\eqref{eq:DualOfThT} postcomposed with the map $C_{X\times I}$. So by the claim, its Spanier--Whitehead dual is homotopic over $\Sigma \Th(\nu^\PL_{\partial (X \times I)})$ to $\Th(t_{\phi, \wh{\phi}})\circ c_{X \times I}$, as required. 
\end{proof}

\begin{remark}
  The structure of the proof of Lemma~\ref{lem:newversion} can be summarised as follows. We proved the following sequence of homotopies of maps of spectra $T^\vee \to \bS$ under $\Sigma^{\infty}\partial(X \times I)_+$:
  \begin{align*}
    \Ad^{-1} \eqref{eq:SecondDefEtaT} &\simeq \SW ( \Sigma^{-5}(\Th(\wh{\phi}) \circ c_M)) \simeq \SW (\Sigma^{-5} (\Th(t_{\phi,\wh{\phi}}) \circ c_{X \times I})) \\
     &\simeq \SW  \big( (\Sigma^{-5} \Th(t_{\phi,\wh{\phi}})) \circ (\Sigma^{-5} c_{X \times I})\big)
     \simeq \SW(\Sigma^{-5} c_{X \times I}) \circ \SW (\Sigma^{-5} \Th(t_{\phi,\wh{\phi}})) \\  &\simeq
     C_{X \times I} \circ \eqref{eq:DualOfThT}
     \simeq a \circ \Sigma^{\infty}\eta_t(\phi,\wh{\phi}) \simeq \Ad^{-1} \eta_t(\phi,\wh{\phi}).
  \end{align*}
  Taking adjoints and forgetting basepoints yields the statement of Lemma~\ref{lem:newversion}.
\end{remark}

\begin{proposition}
The left-hand square of diagram~\eqref{eq:MTWdiagram} commutes.
\end{proposition}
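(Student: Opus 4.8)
The plan is to verify the identity $(T_{0,1})_*\bigl(\widehat{\eta}(\phi)\bigr)=\eta_t(\phi,\widehat{\phi})$ for every loop $\phi\in\pi_1(\hAut (X))=\pi_0(\hAut_\partial(X\times I))$, where $(\phi,\widehat{\phi})\in\mathcal{S}^t_\partial(X\times I)$ is the image of $\phi$ under the top horizontal map; once this is done for all $\phi$ the square commutes. The strategy is to rewrite both sides as explicit morphisms of spectra and compare them, using the stable framing of $X$ throughout. First I would make the top horizontal map explicit: since $X$, and hence $X\times I$, is stably framable by condition~\ref{it:Fram}, the fixed stable framing employed in Construction~\ref{construction:eta} trivialises $\nu^\PL_{X\times I}$, and $\widehat{\phi}\colon\nu^\PL_{X\times I}\to\nu^\PL_{X\times I}$ is the unique bundle lift of $\phi$ compatible with that trivialisation; it restricts to the identity over $\partial(X\times I)$ because $\phi$ does, so $(\phi,\widehat{\phi})$ is indeed a tangential structure.

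Next I would apply Lemma~\ref{lem:newversion}, which identifies $\eta_t(\phi,\widehat{\phi})\in[\tfrac{X\times I}{\partial},\G]$ with the class represented, after $\Sigma$--$\Omega$ adjunction and passage to the quotient, by the morphism $\SW\bigl(\Sigma^{-5}(\Th(\widehat{\phi})\circ c_{X\times I})\bigr)\colon\Sigma^\infty (X\times I)_+\to\bS$ lying under the collapse $C_{\partial(X\times I)}$. Into this I feed the framing. The relative form of the equivalence~\eqref{eq:equivalence} for the $5$-manifold $X\times I$ yields $\Th(\nu^\PL_{X\times I})/\Th(\nu^\PL_{\partial(X\times I)})\simeq\Sigma^\infty\bigl(\tfrac{(X\times I)_+}{\partial(X\times I)_+}\bigr)\simeq\Sigma^{\infty+1}X_+$; under this identification the Thom collapse $c_{X\times I}$ corresponds to the suspended fundamental class $\Sigma[X]_\bS$ (the relative analogue of Lemma~\ref{lem:orientations}), and $\Th(\widehat{\phi})$, being framing-compatible and the identity over the boundary, corresponds to $\Sigma^\infty$ of the based self-map $\overline{\phi}$ of $\tfrac{(X\times I)_+}{\partial(X\times I)_+}\simeq\Sigma X_+$ induced by $\phi$. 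So $\Sigma^{-5}(\Th(\widehat{\phi})\circ c_{X\times I})$ is carried to $\Sigma^{-5}\bigl((\Sigma^\infty\overline{\phi})_*(\Sigma[X]_\bS)\bigr)$: the loop $\phi$ acting on the fundamental class of $X\times I$, which is the heuristic content of both $\widehat{\eta}$ and $\eta_t$.

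Then I would dualise, using the contravariant multiplicativity of $\SW$, Lemma~\ref{lem:SWcollapse} (identifying $\SW(\Sigma^{-5}c_{X\times I})$ with $C_{X\times I}$ and $\SW(\Sigma^{-4}c_X)$ with $C_X$), additivity of $\SW$, and the framing self-duality $\SW(\Sigma^\infty X_+)\simeq\Sigma^{-4}\Sigma^\infty X_+$ of the stably framed $4$-manifold $X$, again a consequence of~\eqref{eq:equivalence}. The heart of the computation is the identity that the Atiyah dual of the self-map of $\tfrac{(X\times I)_+}{\partial(X\times I)_+}$ induced by $\phi$, taken with respect to Atiyah duality rel.\ $\partial(X\times I)$, is exactly the map $[\Sigma^\infty\psi-\mathrm{pr}_X]\colon\Sigma^{\infty+1}X_+\to\Sigma^\infty X_+$ of~\eqref{eq:PhiMinusId} obtained from $\phi$ by the gluing-and-projection of Construction~\ref{construction:eta}; combining this with the dual of $c_{X\times I}$, the full Spanier--Whitehead dual of $\Th(\widehat{\phi})\circ c_{X\times I}$ is carried to $[\Sigma^\infty\psi-\mathrm{pr}_X]_*(\Sigma[X]_\bS)$, which is precisely the class that defines $\widehat{\eta}(\phi)$ under adjunction, Atiyah duality and~\eqref{eq:equivalence}. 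It remains to observe that the basepoint translation $T_{0,1}$ from $0\in\Omega^\infty\bS$ to $1\in\mathrm{SG}$ accounts for the passage from the genuine bundle map $\Th(\widehat{\phi})$ (contributing an identity/constant term) to its reduced difference, equivalently for the subtraction of $\mathrm{pr}_X$ in the definition of $\widehat{\eta}$; putting everything together gives $(T_{0,1})_*\bigl(\widehat{\eta}(\phi)\bigr)=\eta_t(\phi,\widehat{\phi})$.

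The step I expect to be the main obstacle is the identification of the Atiyah dual of the $\phi$-induced self-map of $\tfrac{(X\times I)_+}{\partial}$ with $[\Sigma^\infty\psi-\mathrm{pr}_X]$, together with the attendant Spanier--Whitehead bookkeeping: one must keep careful track of the relative structure (everything taking place under $\Sigma^\infty\partial(X\times I)_+$, equivalently over $\Sigma\Th(\nu^\PL_{\partial(X\times I)})$), of the various suspension shifts incurred in passing between $X$, $X\times I$, $X\times S^1$ and their Thom spectra, and --- most delicately --- of the distinction between the basepoints $0$ and $1$ of $\Omega^\infty\bS$, which is exactly what $T_{0,1}$ and the ``$-\,\mathrm{pr}_X$'' encode. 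It is here that the four-dimensionality of $X$ is essential: the self-duality of $\Sigma^\infty X_+$ invoked above rests on the stable framing supplied by condition~\ref{it:Fram} via~\eqref{eq:equivalence}, and the argument would not survive into higher dimensions, consistent with the fact that diagram~\eqref{eq:etaUmkehr}, and hence~\eqref{eq:MTWdiagram}, should not be expected to commute there.
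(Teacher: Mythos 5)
Your proposal follows essentially the same route as the paper's proof: make $\wh{\phi}$ explicit via the stable framing, invoke Lemma~\ref{lem:newversion} to rewrite $\eta_t(\phi,\wh{\phi})$ as the Spanier--Whitehead dual of $\Sigma^{-5}(\Th(\wh{\phi})\circ c_{X\times I})$, trivialise the Thom spectra by the framing so that $c_{X\times I}$ becomes $\Sigma[X]_\bS$, observe that the basepoint translation between $0$ and $1$ in $\Omega^\infty\bS^0$ is exactly the passage from $\Sigma^\infty\phi$ to the reduced difference factoring as $q\circ[\Sigma^\infty\psi-\pr_X]$, and match the result with the definition of $\wh{\eta}(\phi)$. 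The only caveat is a phrasing issue at the step you yourself flag as the main obstacle --- it is the difference $\Sigma^\infty\phi-\Id$ (lifted along the quotient $q$ via the cofibre sequence for $\partial(X\times I)\to X\times I\to \tfrac{X\times I}{\partial}$), not the dual of the $\phi$-induced self-map itself, that is identified with $[\Sigma^\infty\psi-\pr_X]$ --- but this is exactly the bookkeeping the paper carries out.
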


\begin{proof}

Let $\phi \colon X \times I \to X \times I$ be a homotopy equivalence which is the identity on the boundary, representing an element of $\pi_1(\hAut(X))$. As in Construction \ref{construction:eta}, we associate to $\phi$ the induced map $\psi \colon X \times S^1 \to X$ which is the identity on $X \times \{1\}$.
Using the framing of $\nu_X^\PL$ we can cover $\phi$ by the $\PL$-microbundle map $\wh{\phi} \colon  \nu_{X \times I}^\PL \to \nu_{X \times I}^\PL$ induced by the framing, giving an element~$(\phi, \wh{\phi}) \in \mathcal{S}^t_\partial(X \times I)$. By~\cref{lem:newversion}, $\eta_t(\phi, \wh{\phi})$ is described using the diagram
\begin{equation*}
\begin{tikzcd}
\bS^5 \rar{c_{X \times I}} \arrow[rrd, swap, bend right = 10,  "\Sigma c_{\partial(X \times I)}"] & \Th(\nu^\PL_{X \times I})/\Th(\nu^\PL_{\partial (X \times I)}) \rar{\Th(\wh{\phi})} & \Th(\nu^\PL_{X \times I})/\Th(\nu^\PL_{\partial (X \times I)}) \dar{\partial} \\
 & & \Sigma \Th(\nu^\PL_{\partial (X \times I)}),
\end{tikzcd}
\end{equation*}
by
applying the functor $\SW\circ\Sigma^{-5}$
and then $\Sigma$-$\Omega$ adjunction. The basepoint-translated element $(T_{1,0})_*(\eta_t(\phi, \wh{\phi})) \in [\tfrac{X \times I}{\partial}, \Omega^\infty \bS]$ is then described by the same recipe applied to the analogous diagram
\begin{equation*}
\begin{tikzcd}
\bS^5 \rar{c_{X \times I}} \arrow[rrrd, swap, bend right = 10,  "0"] & \Th(\nu^\PL_{X \times I})/\Th(\nu^\PL_{\partial (X \times I)}) \ar[rr, "{\Th(\wh{\phi}) - \Id}"] && \Th(\nu^\PL_{X \times I})/\Th(\nu^\PL_{\partial (X \times I)}) \dar{\partial} \\
 & && \Sigma \Th(\nu^\PL_{\partial (X \times I)}),
\end{tikzcd}
\end{equation*}
 Using the framing to trivialise these Thom spectra, we may write this diagram as
\begin{equation*}
\begin{tikzcd}
\bS^5 \ar[rr,"{[X \times I]_\bS}"] \arrow[rrrd, swap, bend right = 10,  "0"] && \Sigma^\infty \tfrac{X \times I}{\partial} \rar{\Sigma^\infty \phi - \Id} & \Sigma^\infty \tfrac{X \times I}{\partial} \dar{\partial} \\
 & && \Sigma^{\infty+1} \partial(X \times I)_+.
\end{tikzcd}
\end{equation*}

There is a factorisation
\[
\Sigma^\infty \phi - \Id \colon \Sigma^\infty \tfrac{X \times I}{\partial}\simeq \Sigma^\infty\tfrac{(X\times S^1)_+}{(X\times\{1\})_+} \xrightarrow{[\Sigma^\infty\psi - \pr_X]} \Sigma^\infty X_+ \simeq \Sigma^\infty (X \times I)_+ \overset{q}\lra \Sigma^\infty \tfrac{X \times I}{\partial},
\]
where first map is described in \eqref{eq:PhiMinusId} and the second is the quotient.
The map $\partial$ in the diagram fits into the cofibre sequence
$\Sigma^\infty (X \times I)_+ \xrightarrow{q} \Sigma^\infty \tfrac{X \times I}{\partial} \xrightarrow{\partial}
  \Sigma^{\infty+1}\partial(X \times I)_+$, where  this cofibre sequence is obtained from taking the sequence $\partial(X \times I) \to X \times I \to \tfrac{X \times I}{\partial}$, applying $\Sigma^{\infty}(-)_+$ and extending to the right by one.
It follows that the map described by the diagram corresponds, under lifting along $q$, to the composition
\begin{equation}
\label{eq:composition}
\begin{tikzcd}
\bS^5\ar[rr, "{[X\times I]_{\bS}}"]
&&
 \Sigma^\infty \tfrac{X \times I}{\partial} \ar[rr, "{[\Sigma^\infty\psi-\pr_X]}"]
 &&
 \Sigma^\infty X_+.
\end{tikzcd}
\end{equation}
Under the identification $\Sigma^\infty\tfrac{X\times I}{\partial}\simeq \Sigma^{\infty+1}X_+$, the fundamental class $[X\times I]_{\bS}$ corresponds to $\Sigma[X]_\bS$. Using this, the composition~\eqref{eq:composition} corresponds to the element
$[\Sigma^\infty\psi-\pr_X]_*(\Sigma[X]_\bS)\in [\bS^5, \Sigma^\infty X_+]\cong [\bS^5, \Th(\nu^{\PL}_X)]$
where the isomorphism comes from another use of the framing. Applying the functor $\SW\circ\Sigma^{-5}$ to the composition, we obtain the definition of $\wh{\eta}(\phi)$.
 \end{proof}

\subsubsection{Proof that \eqref{eq:etaUmkehr} commutes}

The equivalence $\Omega \G \simeq \Omega^{\infty+1}\bS^0$, induced by basepoint translation (\cref{rem:basepoint}), is \emph{not an equivalence of infinite loop spaces} (for example, their deloopings have different Pontryagin rings~\cite[\textsection3.F]{MadsenMilgram}). This will account for a difficulty the following proposition. The diagram in the following proposition is obtained from \eqref{eq:etaUmkehr} by applying the alternative factorisation of $\eta$ that we obtained by proving that diagram \eqref{eq:MTWdiagram} commutes.  Hence Proposition~\ref{prop:0map} completes the proof that \eqref{eq:etaUmkehr} commutes.

\begin{proposition}\label{prop:0map}
There is a commutative diagram of groups and homomorphisms
\begin{equation*}
\begin{tikzcd}
\eta \colon \pi_1(\hAut (X)) \rar{\widehat{\eta}} \dar{\mft_*} & \left[X_+, \Omega^{\infty+1} \bS^0 \right] \arrow[r, "\cong","(T_{0,1})_*"'] \dar{\mft_!} & \left[X_+, \Omega\G \right] \rar\dar{\mft_!} & {[X_+, \Omega(\G/\PL)]} \dar{\mft_!}\\
\eta \colon \pi_1(\hAut (A)) \rar{\widehat{\eta}}  & \left[A_+, \Omega^{\infty+1} \bS^0 \right] \arrow[r, "\cong","(T_{0,1})_*"'] & \left[A_+, \Omega \G \right] \rar & {[A_+, \Omega(\G/\PL)],}
\end{tikzcd}
\end{equation*}
where each $\mft_!$ denotes the umkehr map with respect to the indicated infinite loop space structure on the codomain.
\end{proposition}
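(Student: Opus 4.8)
The plan is to use the commutativity of~\eqref{eq:MTWdiagram}, just established, to factor the map $\eta$ of~\eqref{eq:etaUmkehr} as the composite of $\widehat{\eta}$, $(T_{0,1})_*$, and the map induced by $\G\to\G/\PL$, and then to split the square of Proposition~\ref{prop:0map} into the three squares determined by these three horizontal maps, treating each in turn; pasting the three then yields~\eqref{eq:etaUmkehr} and hence completes the proof of Theorem~\ref{thm:main-example}. The rightmost square, built from $\G\to\G/\PL$, is the most routine: this map sits in the fibre sequence $\PL\to\G\to\G/\PL\to\BPL\to\BG$ obtained by looping $\G/\PL\to\BPL\to\BG$, and since $\BPL\to\BTop\to\BG$ are infinite loop maps for the Whitney-sum structures, so is $\G\to\G/\PL$, so that $\Omega\G\to\Omega(\G/\PL)$ deloops to a map of connective spectra. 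As each $\mft_!$ is built from the cap product with $[X]_\bS$, which is $\bS$-linear, and the covariant map on homology, it is natural for maps of spectra, and this square commutes.

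For the square built from $\widehat{\eta}$, the point is naturality of Construction~\ref{construction:eta} with respect to $\mft$. Functoriality of Postnikov truncation sends a loop $\phi\colon X\times I\to X\times I$ of homotopy automorphisms to a loop $\bar\phi$ of homotopy automorphisms of $\BB\pi\simeq A$ together with a homotopy $\mft\circ\phi\simeq\bar\phi\circ\mft$ that is constant on $X\times\{0,1\}$; gluing $X\times\{0\}$ to $X\times\{1\}$ produces maps $\psi\colon X\times S^1\to X$ and $\bar\psi\colon A\times S^1\to A$ with $\mft\circ\psi\simeq\bar\psi\circ(\mft\times\id_{S^1})$, hence $\Sigma^\infty\mft\circ[\Sigma^\infty\psi-\pr_X]\simeq[\Sigma^\infty\bar\psi-\pr_A]\circ\Sigma^{\infty+1}\mft$ after passing to the relevant quotients. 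Under the adjunction--Atiyah duality--\eqref{eq:equivalence} isomorphism of Construction~\ref{construction:eta}, which identifies $[X_+,\Omega^{\infty+1}\bS^0]$ with $\pi_5^s(X_+)=[\bS^5,\Sigma^\infty X_+]$, the umkehr $\mft_!$ for the spectrum $\bS^{-1}$ underlying $\Omega^{\infty+1}\bS^0$ becomes post-composition with $\Sigma^\infty\mft$, because $\mft_!=PD^{-1}\circ\mft_*\circ PD$ and, for the framed manifold $X$, this $PD$ \emph{is} the adjunction--Atiyah duality--\eqref{eq:equivalence} isomorphism. Combining the displayed homotopy with $(\Sigma^\infty\mft)_*[X]_\bS=[A]_\bS$ from Lemma~\ref{lem:orientations} gives $\mft_!(\widehat{\eta}(\phi))=[\Sigma^\infty\bar\psi-\pr_A]_*(\Sigma[A]_\bS)=\widehat{\eta}(\bar\phi)$, the commutativity of this square.

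The main obstacle is the middle square, built from $(T_{0,1})_*$, where the difficulty flagged in the Remark preceding the proposition is genuine: the umkehr on the left uses the sphere spectrum structure while the one on the right uses the spectrum underlying the \emph{multiplicative} infinite loop space $\G$, and $T_{0,1}$ is merely a basepoint translation, not an infinite loop map, so it does not obviously intertwine the two umkehrs. The plan is to exploit that $X$ and $A$ are $4$-dimensional. First, although $T_{0,1}$ is essentially $x\mapsto 1+x$ followed by restriction to the multiplicative component, the difference between this and a multiplicative infinite loop map is carried by reduced products, and the reduced diagonal of the suspension $\Sigma X_+\simeq\tfrac{X\times I}{\partial(X\times I)}$ is nullhomotopic, so $(T_{0,1})_*$ is already a group isomorphism on $[\tfrac{X\times I}{\partial(X\times I)},-]$. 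Second --- and this is the part needing care --- I would show that in the range relevant to maps out of the $5$-dimensional complex $\Sigma X_+$ the equivalence $T_{0,1}$ is induced by an equivalence of suitable connective truncations of $\bS^{-1}$ and of the spectrum of $\G$ (these have the same homotopy groups in all non-negative degrees, and since $\pi^s_4=\pi^s_5=0$ only a short Postnikov stage is in play, so the discrepancy is carried by operations that simply run out of cells of $X$); fixing such an equivalence compatibly with $T_{0,1}$, naturality of $\mft_!$ for maps of spectra, exactly as in the other two squares, then gives commutativity. With all three squares commuting, pasting them proves the proposition.
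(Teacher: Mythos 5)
Your decomposition into three squares, and your treatment of the outer two, match the paper: the right-hand square is naturality of the umkehr for the infinite loop map $\G\to\G/\PL$, and the left-hand square is naturality of Construction~\ref{construction:eta} together with $\mft_*[X]_\bS=[A]_\bS$ from Lemma~\ref{lem:orientations}. The problem is the middle square, which you correctly identify as the crux but do not actually prove.

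Your plan there is to produce an equivalence of \emph{spectra} between suitable connective truncations of $\bS^{-1}$ and of the spectrum underlying $\Omega\G$, compatible with $T_{0,1}$, and then invoke naturality of $\mft_!$. But that is exactly the statement one cannot get for free: the additive and multiplicative infinite loop structures genuinely differ (their deloopings have different Pontryagin rings), and the relevant truncation is $\tau_{\leq 3}$ of $\Omega^\infty_0\bS^0$ versus $\tau_{\leq 3}\mathrm{SG}$, which involves $\pi_1^s$, $\pi_2^s$, $\pi_3^s$ and their $k$-invariants --- this is not a range in which ``operations run out of cells of $X$'', since $\Sigma X_+$ is $5$-dimensional. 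You would need to compare $k$-invariants of the two spectrum structures and show the space-level equivalence $T_{0,1}$ refines to a spectrum map; nothing in your sketch does this, and the paper's authors evidently could not do it directly either. (Your preliminary remark that $(T_{0,1})_*$ is a group isomorphism on $[\Sigma X_+,-]$ because the reduced diagonal of a suspension is null addresses only the group structure on the mapping set, not the infinite loop structure used to define $\mft_!$.)

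The paper instead proves the strictly weaker statement that the \emph{difference} $\mft_!^{\bS}-\mft_!^{\G}$ vanishes, by a different mechanism: after truncating (using $\pi_4^s=\pi_5^s=0$), it splits $\tau_{\leq 3}\Omega^\infty_0\bS^0\simeq\tau_{\leq 3}\mathrm{SG}$ into a fibre sequence with fibre $\tau_{[2,3]}$ and base $\tau_{\leq 1}$, each of which admits a \emph{unique} infinite loop structure ($K(\Z/2,1)$ for the base; Mathew--Stojanoska for the fibre), so the difference of umkehrs vanishes on those pieces; it then shows the middle difference factors through the quotient $\Omega\rho_*$, that $\Omega\rho_*\circ\mft^*$ is surjective onto the $\Z/2$ quotient, and uses $\mft_!\circ\mft^*=\Id$ (Lemma~\ref{lem:UmkehrSplit}, valid for either structure) to conclude by a diagram chase. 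Some argument of this kind --- or a genuine comparison of the two truncated spectra --- is needed to close the gap in your middle square.
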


\begin{proof}
Apart from the left column, the group structures arise from loop space structures indicated by (the first) $\Omega$. These are compatible, so the maps are homomorphisms.
The commutativity of the right-hand square is by naturality of the umkehr map with respect to maps of generalised cohomology theories, i.e.\ maps of infinite loop spaces. The commutativity of the middle square is therefore surprising, as we have pointed out that $\Omega \G \simeq \Omega^{\infty+1}\bS^0$ is not an equivalence of infinite loop spaces. We will address this below.

Next we prove the commutativity of the left-hand square.
Let $\phi$ be a loop in $\hAut (X)$  based at the identity map. As in Construction \ref{construction:eta}, we associate to $\phi$ the induced map $\psi \colon X \times S^1 \to X$ which is the identity on $X \times \{1\}$.  By functoriality of Postnikov truncation, $\psi$ yields $\mft_*(\psi) \in \pi_1(\hAut (A))$, and the diagram
\begin{equation*}
\begin{tikzcd}
\bS^5 \arrow[rr, "{\Sigma [X]_\bS}"] \arrow[drr, swap, "{\Sigma [A]_\bS}"]  && \Sigma^{\infty+1} X_+ \dar{\Sigma \mft_+} \ar[rrr, "{[\Sigma^\infty\psi - \mathrm{pr}_X]}"] &&& \Sigma^\infty X_+ \dar{\mft_+}\\
 && \Sigma^{\infty+1} A_+ \ar[rrr, "{[\Sigma^\infty\mft_*(\psi)-\mathrm{pr}_A]}"] &&& \Sigma^\infty A_+
\end{tikzcd}
\end{equation*}
commutes up to homotopy, using that $\mft_*({[X]_\bS}) = [A]_\bS$ by Lemma \ref{lem:orientations}. The clockwise composition in $[\bS^5, \Sigma^\infty A_+] =[\bS^5, \Th(\nu^{\PL}_A)]\cong_{\SW} [A_+, \Omega^{\infty+1}\bS^0]$ is by definition $\mft_! \widehat{\eta}(\phi)$; the anticlockwise composition is $\widehat{\eta}(\mft(\phi))$.

To prove commutativity of the middle square, let us write $\mft_!^\bS$ for the left-hand umkehr map and~$\mft_!^\G$ for the right-hand one. As $X$ is 4-dimensional we may replace $\Omega^{\infty+1} \bS^0$ by its 4-truncation, which we choose to write as $\Omega \tau_{\leq 5}\Omega^{\infty} \bS^0 = \Omega \Omega^{\infty} \tau_{\leq 5}\bS^0$. We then use that $\pi_4^s(\bS^0) = 0 = \pi_5^s(\bS^0)$ to see that $\tau_{\leq 5}\bS^0 = \tau_{\leq 3}\bS^0$, and hence we have that $\Omega \tau_{\leq 5}\Omega^{\infty} \bS^0 = \Omega \tau_{\leq 3}\Omega^{\infty} \bS^0$. Now we consider the equivalence of fibre sequences of connected spaces
\begin{equation*}
\begin{tikzcd}
\tau_{[2,3]}\Omega^{\infty}_0 \bS^0 \rar \arrow[d, "\simeq"] &  \tau_{\leq 3}\Omega^{\infty}_0 \bS^0 \rar{\rho} \arrow[d, "\simeq"]&  \tau_{\leq 1}\Omega^{\infty}_0 \bS^0 \arrow[d, "\simeq"]\\
\tau_{[2,3]}\mathrm{SG} \rar &  \tau_{\leq 3}\mathrm{SG} \rar&  \tau_{\leq 1}\mathrm{SG}
\end{tikzcd}
\end{equation*}
and make the following observation:~the base and fibre each admit a unique structure of an infinite loop space. For the base this is clear as it is a $K(\Z/2,1)$; for the fibre this follows from Mathew--Stojanoska~\cite[Theorem 5.1.2]{MathewStojanoska}, though it can also be proven by hand.
The citation applied with $n=2$ says that the functor $\Omega^{\infty}(-) \colon \mathcal{S}\mathrm{pectra}_{[2,3]} \to \mathcal{S}_*$, from the $\infty$-category of spectra with nontrivial homotopy groups only in degrees two and three to the $\infty$-category of pointed spaces, is fully faithful, i.e.\ homotopy equivalence on morphism spaces. If we have two a priori different infinite loop space structures on $\tau_{[2,3]}\mathrm{SG}$, they corresponds to two spectra in the domain of  $\Omega^{\infty}(-)$ that are homotopy equivalent in the codomain. The fact that $\Omega^{\infty}(-)$ is fully faithful leads to an equivalence of spectra, and hence of infinite loop space structures on $\tau_{[2,3]}\mathrm{SG}$.  

Thus the maps on base and fibre are necessarily equivalences of infinite loop spaces. Looping the top fibre sequence and mapping in $X$ or $A$, the difference of the umkehr maps $\mft_!^\bS$ and $\mft_!^\G$ fit in to a map of short exact sequences
\begin{equation}\label{diagram:difference-of-t-bangs}
\begin{tikzcd}
0 \rar & {[X_+, \Omega \tau_{[2,3]}\Omega^{\infty}_0 \bS^0]} \rar \dar{\mft_!^\bS - \mft_!^\G=0} &  {[X_+, \Omega\tau_{\leq 3}\Omega^{\infty}_0 \bS^0]} \rar{\Omega \rho_*} \dar{\mft_!^\bS - \mft_!^\G}&  {[X_+, \Omega\tau_{\leq 1}\Omega^{\infty}_0 \bS^0]} \dar{\mft_!^\bS - \mft_!^\G = 0} \rar & 0\\
0 \rar &{[A_+, \Omega \tau_{[2,3]}\Omega^{\infty}_0 \bS^0]} \rar  &  {[A_+, \Omega\tau_{\leq 3}\Omega^{\infty}_0 \bS^0]} \rar &  {[A_+, \Omega\tau_{\leq 1}\Omega^{\infty}_0 \bS^0]} \rar & 0,
\end{tikzcd}
\end{equation}
where the two outer vertical maps are zero because here the two infinite loop space structures used to form the two umkehr maps agree, although we will only make use of the vanishing of the left-hand vertical map.

First we show that the right hand groups are both $\Z/2$. For this we compute the homotopy groups of the codomain $\Omega\tau_{\leq 1}\Omega^{\infty}_0 \bS^0$. The space $\tau_{\leq 1}\Omega^{\infty}_0 \bS^0$ has non-vanishing homotopy groups only in degrees 0 and 1, where they are the stable $0$-stem $\Z$ and the stable $1$-stem $\Z/2$. Taking the loop space shifts the homotopy groups down, so that only a $\Z/2$ in degree $0$ remains. We see that~$\Omega\tau_{\leq 1}\Omega^{\infty}_0 \bS^0$ has precisely two path components, each of which is weakly contractible.  It follows that $[X_+, \Omega\tau_{\leq 1}\Omega^{\infty}_0 \bS^0] \cong \Z/2 \cong  [A_+, \Omega\tau_{\leq 1}\Omega^{\infty}_0 \bS^0]$, as claimed.

We show that the middle vertical map $\mft_!^\bS - \mft_!^\G$ factors through the map $\Omega \rho_*$ to the quotient, yielding
\[(\mft_!^\bS - \mft_!^\G) \colon [X_+, \Omega\tau_{\leq 3}\Omega^{\infty}_0 \bS^0] \xrightarrow{\Omega \rho_*} [X_+, \Omega\tau_{\leq 1}\Omega^{\infty}_0 \bS^0] \xrightarrow{T} [A_+, \Omega\tau_{\leq 3}\Omega^{\infty}_0 \bS^0].\]
To see this, let $x,x' \in [X_+, \Omega\tau_{\leq 3}\Omega^{\infty}_0 \bS^0]$ be such that $\Omega \rho_*(x) = \Omega \rho_*(x')$.
Then $\Omega\rho_*(x-x') =0$, so~$x-x'$ lies in the image of $[X_+, \Omega \tau_{[2,3]}\Omega^{\infty}_0 \bS^0]$. By commutativity of the left square, and the fact that the left vertical map vanishes, we deduce that  $0= (\mft_!^\bS - \mft_!^\G)(x - x') = (\mft_!^\bS - \mft_!^\G)(x) - (\mft_!^\bS - \mft_!^\G)(x')$. Thus $(\mft_!^\bS - \mft_!^\G)(x) = (\mft_!^\bS - \mft_!^\G)(x')$ and the central $\mft_!^\bS - \mft_!^\G$ factors as $T \circ (\Omega \rho_*)$, as asserted.

Next we argue that the composition
\[[A_+, \Omega\tau_{\leq 3}\Omega^{\infty}_0 \bS^0] \overset{\mft^*}\lra [X_+, \Omega\tau_{\leq 3}\Omega^{\infty}_0 \bS^0] \lra [X_+, \Omega\tau_{\leq 1}\Omega^{\infty}_0 \bS^0]\]
is surjective. To see this recall that the last group here is $\Z/2$, and note that by a similar calculation~$\Omega\tau_{\leq 3}\Omega^{\infty}_0 \bS^0$ has precisely two path components. We can map all of $A$ to either of these components to obtain elements in the domain of the composition that map to each of the elements in the codomain $\Z/2$.

We have developed the following diagram.
\[
\begin{tikzcd}
  {[A_+, \Omega\tau_{\leq 3}\Omega^{\infty}_0 \bS^0]} \ar[r,twoheadrightarrow] \ar[d,"\mft^*"] & {[X_+, \Omega\tau_{\leq 1}\Omega^{\infty}_0 \bS^0]} \ar[d,"T"] \\
{[X_+, \Omega\tau_{\leq 3}\Omega^{\infty}_0 \bS^0]} \ar[r,"\mft_!^\bS - \mft_!^\G"'] \ar[ur,"\Omega \rho_*"] &  {[A_+, \Omega\tau_{\leq 3}\Omega^{\infty}_0 \bS^0]}.
\end{tikzcd}
\]
The identity $\mft_! \circ \mft^* = \mathrm{Id}$ from Lemma \ref{lem:UmkehrSplit} holds for both meanings of $\mft_!$, so that $(\mft_!^\bS - \mft_!^\G) \circ \mft^*=0$. It now follows by a straightforward diagram chase that $\mft_!^\bS - \mft_!^\G=0$. Here are the details.
Let $x \in [X_+, \Omega\tau_{\leq 3}\Omega^{\infty}_0 \bS^0]$. Since the top horizontal map is surjective, there exists $y \in [A_+, \Omega\tau_{\leq 3}\Omega^{\infty}_0 \bS^0]$ with $\Omega \rho_* \circ \mft^*(y) = \Omega\rho_*(x)$. Then
\[(\mft_!^\bS - \mft_!^\G)(x) = T \circ (\Omega\rho_*)(x) = T \circ (\Omega\rho_*) \circ \mft^*(y) = (\mft_!^\bS - \mft_!^\G)\circ \mft^*(y) =0.\]
 This shows that the middle square of \eqref{diagram:difference-of-t-bangs} commutes, completing the proof of the proposition.
\end{proof}

As explained beforehand, the proposition implies that the square \eqref{eq:etaUmkehr} commutes. This completes the proof that \eqref{diagram:master-diagram} commutes, and hence completes the proof of Theorem~\ref{thm:main-example}.

\def\MR#1{}
\bibliography{smoothing_pseudo}

\end{document}